\colorlet{shadecolor}{orange!15}
\newtheorem{teo}{Theorem}[section]
\newtheorem{lem}[teo]{Lemma} % use the same couter of theorem
\newtheorem{cor}[teo]{Corollary}
\newtheorem{prop}[teo]{Proposition}
\newtheorem{defn}[teo]{Definition}
\newtheorem{ex}[teo]{Example}
\newtheorem{fat}[teo]{Fact}
\newtheorem*{claim*}{Claim}
\begin{document}

\title{K-theories and Free Inductive Graded Rings in Abstract Quadratic Forms Theories}

\author{
Kaique Matias de Andrade Roberto\footnote{Instituto de Matemática e Estatística, Universidade de São Paulo, Brazil.} \\ Hugo Luiz Mariano\footnote{Emails: kaique.roberto@alumni.usp.br, hugomar@ime.usp.br}
}

\maketitle

\begin{abstract}
 We build on previous work on multirings (\cite{roberto2021quadratic}) that provides generalizations of the available abstract quadratic forms theories (special groups and real semigroups) to the context of multirings (\cite{marshall2006real}, \cite{ribeiro2016functorial}). Here we raise one step in this generalization, introducing the concept of pre-special hyperfields and expand a fundamental tool in quadratic forms theory to the more general multivalued setting: the K-theory. We introduce and develop the K-theory of hyperbolic hyperfields that generalize simultaneously Milnor's K-theory (\cite{milnor1970algebraick}) and Special Groups K-theory, developed by Dickmann-Miraglia (\cite{dickmann2006algebraic}). We develop some properties of this generalized K-theory, that can be seen as a free inductive graded ring, a concept introduced in \cite{dickmann1998quadratic} in order to provide a solution of Marshall's Signature Conjecture.

 \textbf{Keywords:} quadratic forms, special groups, K-theory, multirings, hyperfields.
\end{abstract}

\section{Introduction}

Concerning Abstract Theories of Quadratic forms (in particular special groups and real semigroups), the references \cite{dickmann2000special}, \cite{dickmann2015faithfully} and \cite{dickmann2004real} are central. The theory of special groups deals simultaneously reduced and non-reduced theories but focuses on rings with an ``expressive amount'' of invertible coefficients to quadratic forms and the theory of real semigroups consider general coefficients of a ring, but only addresses the reduced case. Both are first-order theory, thus they allow the use of model theoretic methods.

M. Marshall in \cite{marshall2006real} introduced an approach to (reduced) theory of quadratic forms through the concept of multiring (roughly, a ring with a multi valued sum)
\footnote{The main terminology in the literature is "hyperring". Moreover, M. Marshall makes a distinction between "multiring" and "hyperrings" which is important in the context of quadratic forms. But throughout this entire work, we deal essentially with multifields/hyperfields and then, the main terminology here will be "hyperfield".}: this seems more intuitive for an algebraist since it encompasses (generalizes, in fact) some techniques of ordinary Commutative Algebra.  Moreover, the multirings encode copies of special groups and real semigroups (see \cite{ribeiro2016functorial}) and still allows the use of model-theoretic tools,  since multirings (hyperrings) endowed with convenient notion of morphisms constitutes a category that is isomorphic to a category of appropriate first-order structures.

In the recent work \cite{roberto2021quadratic}: (i) we have considered interesting pairs $(A,T)$ where $A$ is  a multiring and $T \subseteq A$ is a certain multiplicative subset in such a way to obtain models of abstract theories of quadratic forms (special groups and real semigroups) via natural quotients - Marshall's quotient construction and (ii) we have used this new setting to motivate a "non reduced" expansion of the theory  of real semigroups to deal the formally real case, isolating axioms over pairs involving multirings and a  multiplicative subset with some properties.

The uses of K-theoretic (and Boolean) methods in abstract theories of quadratic forms has been proved a very successful method, see for
instance, these two papers of Dickmann and Miraglia: \cite{dickmann1998quadratic} where they give an affirmative answer to Marshall's Conjecture, and \cite{dickmann2003lam}, where they give an affirmative answer to Lam's Conjecture (previously both conjecture have kept open for almost three decades).  These two central papers makes us take a deeper look at the theory of special groups (and hence, hyperbolic/pre-special hyperfields) by itself. This is not mere exercise in abstraction: from Marshall's and Lam's Conjecture many questions arise in the abstract and concrete context of quadratic forms.

In the present paper we provide  some new steps towards the  development of tools of algebraic theory of quadratic forms in this multiring setting: we have defined and explored K-theory and graded rings  in the context of hyperfields that, in particular, provides a generalization and unification of Milnor's K-theory (\cite{milnor1970algebraick}) and  special groups K-theory (\cite{dickmann2006algebraic}). We develop some properties of this generalized K-theory, that can be seen as a free inductive graded ring. This is a preprint version of \cite{roberto2021ktheory}.

{\bf Outline:}
In Section 2  we: recall the basic notions of K-theory in quadratic forms theories (Milnor's K-theory, Special Groups K-theory);  provide the basic definitions, constructions  and results on multirings  and hyperfields;  introduce the concept of pre-special  hyperfield.
In the third
section we introduce the notion of  K-theory of hyperfields and establish its main properties and some technical results.
In Section 4 we recognize the K-theory of special hyperfields as a   free construction in the category of inductive graded rings (\cite{dickmann2000special}) and prove that this concept, encompasses both Milnor's K-theory and  Special Groups K-theory.
We finish the work indicating some themes of research motivated by the present paper.

Throughout this entire paper, we adopt the following convention: let $A$ be an arbitrary structure (field, graded ring and so on) and let
$T\subseteq A$ be a subset such that the quotient $A/T$ is defined. We usually denote the elements of $A/T$ by $\overline a\in A/T$. This
notation is overwhelming here (as you will see rapidly). So, we will omit the overline symbol, and an equation ``$\overline a=\overline b$'' that took place in $A/T$ will be denoted simply by ``$a=b$ in $A/T$''. In other words, we will add ``in $A/T$'' or ``in $A$'' to make distinctions between some set and its quotients.

%% main text
\section{Preliminaries}

This section contains, basically,  the fundamental definitions and results included for the convenience of the reader such as multirings, hyperfields, special groups, Minor's K-theory of fields and Dickmann-Miraglia K-theory of (pre) special groups. For more details, consult \cite{milnor1970algebraick}, \cite{dickmann2000special}, \cite{dickmann2006algebraic}, \cite{marshall2006real}, \cite{ribeiro2016functorial} or \cite{roberto2021quadratic}.

 \subsection{Milnor's K-theory}
  Here we get some definitions and results about Milnor's K-theory, as developed in \cite{milnor1970algebraick}.

  \begin{defn}[The Milnor's K-theory of a Field \cite{milnor1970algebraick}]\label{milkt}
  For a field $F$ (of characteristic not 2), $K_*F$ is the graded ring
$$K_*F=(K_0F,K_1F,K_2F,...)$$
  defined by the following rules: $K_0F:=\mathbb Z$. $K_1F$ is the multiplicative group $\dot F$ written additively.
  With this purpose, we fix the canonical ``logarithm'' isomorphism
$$l:\dot F\rightarrow K_1F,$$
  where $l(ab)=l(a)+l(b)$. Then $K_nF$ is defined to be the quotient of the tensor algebra
  $$K_1F\otimes K_1F\otimes...\otimes K_1F\,(n \mbox{ times})$$
  by the (homogeneous) ideal generated by all $l(a)\otimes l(1-a)$, with $a\ne0,1$. We also have the reduced K-theory graded ring
  $k_*F=(k_0F,k_1F,...,k_nF,...),$
  which is defined by the rule $k_nF:=K_nF/2K_nF$ for all $n\ge0$.
  \end{defn}

  % A sobrejeção s
\begin{teo}[Theorem 4.1 of \cite{milnor1970algebraick}]
 There is only one morphism
 $$s_n:k_nF\rightarrow I^nF/I^{n+1}F$$
 which carries each product $l(a_1)...l(a_n)$ in $K_nF/2K_nF$ to the product
 $(\langle a_1\rangle-\langle1\rangle)...(\langle a_n\rangle-\langle1\rangle)$ modulo $I^{n+1}F$.

These morphisms determines an epimorphism $s_*:k_*F\rightarrow W_g(F)$, where
 $$W_g(F)=(WF/IF,IF/I^2F,...,I^nF/I^{n+1}F,...).$$
\end{teo}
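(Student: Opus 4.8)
The plan is to build each $s_n$ out of the tensor-algebra presentation of $K_nF$, verify it factors through the defining relations and through multiplication by $2$, and then read off both uniqueness and the epimorphism property from the fact that the two sides are generated, in each degree, by the length-$n$ symbols. First I would define a map on the generators $l(a_1)\otimes\cdots\otimes l(a_n)$ of the tensor algebra by sending them to $(\langle a_1\rangle-\langle1\rangle)\cdots(\langle a_n\rangle-\langle1\rangle)\bmod I^{n+1}F$. Recording the Witt-ring identity $\langle a\rangle-\langle1\rangle=-\langle\langle a\rangle\rangle$ (which uses $\langle -a\rangle=-\langle a\rangle$), this target is, up to the sign $(-1)^n$, the class of the $n$-fold Pfister form $\langle\langle a_1,\dots,a_n\rangle\rangle$. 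To see the assignment descends to $K_nF$ I would check $\mathbb Z$-multilinearity: from $l(ab)=l(a)+l(b)$ I need $\langle\langle ab\rangle\rangle\equiv\langle\langle a\rangle\rangle+\langle\langle b\rangle\rangle\bmod I^2F$, which holds because their difference is $-\langle\langle a,b\rangle\rangle\in I^2F$, a product of two elements of $IF$; additivity in each of the remaining slots is the same computation performed one coordinate at a time.

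Next I would verify the two quotient relations. For the Steinberg relation I must show that $(\langle a\rangle-\langle1\rangle)(\langle 1-a\rangle-\langle1\rangle)=\langle\langle a\rangle\rangle\langle\langle 1-a\rangle\rangle=\langle\langle a,1-a\rangle\rangle$ vanishes in $I^2F/I^3F$. This is the classical isotropy computation: since $a\cdot1^2+(1-a)\cdot1^2=1$, the binary form $\langle a,1-a\rangle$ represents $1$, hence $\langle a,1-a\rangle\cong\langle 1,a(1-a)\rangle$; feeding this into $\langle\langle a,1-a\rangle\rangle=\langle 1,-(1-a),-a,a(1-a)\rangle$ and regrouping collapses it to a sum of hyperbolic planes, so in fact $\langle\langle a,1-a\rangle\rangle=0$ in $WF$. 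For the reduction modulo $2K_nF$ I would observe that $2=\langle 1,1\rangle=\langle\langle-1\rangle\rangle\in IF$ in $WF$, so multiplying any class of $I^nF$ by $2$ lands in $I^{n+1}F$; hence $s_n$ kills $2K_nF$ and induces a homomorphism $k_nF=K_nF/2K_nF\to I^nF/I^{n+1}F$.

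Finally, uniqueness is forced because $K_nF$, and therefore $k_nF$, is generated by the length-$n$ symbols $l(a_1)\cdots l(a_n)$ on which $s_n$ is prescribed, so any two morphisms agreeing there coincide. Assembling the $s_n$ into $s_*$ yields a graded-ring morphism, since products of symbols map to products of Pfister forms and the multiplication on $W_g(F)$ is precisely the one induced from $WF$; the degree-zero component is the dimension-mod-$2$ isomorphism $k_0F=\mathbb Z/2\mathbb Z\xrightarrow{\ \sim\ }WF/IF$. Surjectivity in each degree, and hence that $s_*$ is an epimorphism, follows from the structural fact that $I^nF/I^{n+1}F$ is generated as an abelian group by the classes of $n$-fold Pfister forms, each of which is $\pm\,s_n(l(a_1)\cdots l(a_n))$.

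The main obstacle is the Steinberg step. Everything else is formal bookkeeping around the $I$-adic filtration of $WF$, whereas verifying that $\langle\langle a,1-a\rangle\rangle$ is hyperbolic is the one genuinely quadratic-form-theoretic input, and it is exactly the relation that makes the passage from Milnor K-theory to the graded Witt ring possible.
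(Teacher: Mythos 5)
This statement appears in the paper's preliminaries as a quoted result (Theorem 4.1 of Milnor's paper) with no proof supplied, so there is no internal argument to compare against; your proof is correct and is essentially the standard (Milnor's original) argument. All the key points are in place: multilinearity modulo $I^{n+1}F$ via $\langle\langle ab\rangle\rangle-\langle\langle a\rangle\rangle-\langle\langle b\rangle\rangle=-\langle\langle a,b\rangle\rangle\in I^2F$, the Steinberg relation from the hyperbolicity of $\langle\langle a,1-a\rangle\rangle$, factorization through $2K_nF$ since $\langle1,1\rangle\in IF$, uniqueness from generation of $k_nF$ by length-$n$ symbols, and surjectivity because $I^nF$ is additively generated by $n$-fold Pfister forms (a fact the paper itself records in its example on inductive graded rings).
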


For a field $F$, let $F_s$ be the a separable closure of $F$ and $G_F=\mbox{Gal}(F_s)$. Then, the exact sequence
$$\xymatrix{1\ar[r] &\{\pm1\}\ar[r] &\dot F_s\ar[r]^2 &\dot F_s\ar[r] & 1}$$
is taken to the following exact sequence
$$\xymatrix{H^0(G_F,\dot F_s)\ar[r]^2 & H^0(G_F,\dot F_s)\ar[r]^{\delta} &H^1(G_F,\{\pm1\})\ar[r] &H^1(G_F,\dot F_s)}$$
of cohomology groups. Identifying the two first groups with $\dot F$, and $\{\pm1\}$ with $\mathbb Z/2\mathbb Z$ and applying Hilbert's 90, we
have
$$\xymatrix{\dot F\ar[r]^2 & \dot F\ar[rr]^{\delta} & & H^1(G_F,\mathbb Z/2\mathbb Z)\ar[r] & 0}.$$
The quotient $\dot F/\dot F^2$ is identified with $k_1F$.

% A injeção h
\begin{teo}[Lemma 6.1 of \cite{milnor1970algebraick}]
 The isomorfism $l(a)\mapsto\delta(a)$ from $K_1F/2K_1F$ to $H^1(G_F,\mathbb Z/2\mathbb Z)$ admits a unique extension to a graded ring morphism
 $$h_f:k_*F\rightarrow H^*(G_F,\mathbb Z/2\mathbb Z).$$
\end{teo}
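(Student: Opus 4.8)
The plan is to exploit the simple generators-and-relations description of $k_*F$ and to reduce the whole statement to a single relation in degree two. As a graded $\mathbb{Z}/2\mathbb{Z}$-algebra, $k_*F$ is the tensor algebra on its degree-one part $k_1F=\dot F/\dot F^2$ modulo the ideal generated by the Steinberg elements $l(a)\cdot l(1-a)$ with $a\neq 0,1$; in particular every element of $k_nF$ is a sum of products $l(a_1)\cdots l(a_n)$, so the degree-one component generates the ring. Uniqueness is then immediate: a graded ring morphism out of $k_*F$ is determined by its values on $k_1F$, and these are prescribed to be $l(a)\mapsto\delta(a)$, leaving at most one candidate $h_f$. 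It remains to construct it.

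For existence I would build $h_f$ upward from degree one. The assignment $l(a)\mapsto\delta(a)$ is a well-defined group (hence $\mathbb{Z}/2\mathbb{Z}$-linear) homomorphism $k_1F\to H^1(G_F,\mathbb{Z}/2\mathbb{Z})$, since $\delta$ is the connecting map of the Kummer sequence displayed above and therefore satisfies $\delta(ab)=\delta(a)+\delta(b)$. Extending multiplicatively through the cup product, $l(a_1)\cdots l(a_n)\mapsto\delta(a_1)\cup\cdots\cup\delta(a_n)$, produces a graded ring morphism out of the tensor algebra; this is forced and automatically well defined, the cup product being $\mathbb{Z}/2\mathbb{Z}$-bilinear, and no separate compatibility with reduction modulo $2$ is needed because $H^*(G_F,\mathbb{Z}/2\mathbb{Z})$ is already a $\mathbb{Z}/2\mathbb{Z}$-algebra. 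To descend this map to the quotient $k_*F$ it then suffices to check that the images of the Steinberg generators vanish, i.e. that
$$\delta(a)\cup\delta(1-a)=0 \quad\text{in}\quad H^2(G_F,\mathbb{Z}/2\mathbb{Z}),\qquad a\neq 0,1.$$

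This last verification is where the genuine content lies, and I expect it to be the main obstacle. If $a\in\dot F^2$ then $\delta(a)=0$ and the relation is trivial, so assume $a$ is a nonsquare, whence $F(\sqrt a)/F$ is a cyclic extension of degree two. The crucial arithmetic input is the factorization $1-a=(1-\sqrt a)(1+\sqrt a)=N_{F(\sqrt a)/F}(1-\sqrt a)$, which exhibits $1-a$ as a norm from $F(\sqrt a)$. Under the identification of $\delta(a)\cup\delta(b)$ with the class in $H^2(G_F,\mathbb{Z}/2\mathbb{Z})$ of the cyclic algebra attached to the pair $(a,b)$—equivalently, via the consequence of Hilbert's Theorem $90$ that $\delta(a)\cup\delta(b)=0$ exactly when $b$ lies in the image of $N_{F(\sqrt a)/F}$—this vanishing follows at once. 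Combining the Steinberg relation with the construction of the previous paragraph yields the graded ring morphism $h_f$ extending $l(a)\mapsto\delta(a)$, and the uniqueness noted above shows it is the only such extension, completing the proof.
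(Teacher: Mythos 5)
The paper itself contains no proof of this statement: it is recalled verbatim as background (Lemma 6.1 of Milnor's paper) in the preliminaries, so the only meaningful comparison is with the classical argument in the cited source. Your proof is correct and is essentially that classical argument: uniqueness because $k_*F$ is generated as a graded $\mathbb{F}_2$-algebra by its degree-one part, existence by extending $l(a)\mapsto\delta(a)$ multiplicatively through the cup product on the tensor algebra $T_{\mathbb{F}_2}(k_1F)$ (your identification of $k_*F$ with this tensor algebra modulo Steinberg elements is legitimate, since tensor algebras commute with the base change $\mathbb{Z}\to\mathbb{F}_2$), and the reduction of everything to the single verification $\delta(a)\cup\delta(1-a)=0$ in $H^2(G_F,\mathbb{Z}/2\mathbb{Z})$, which you settle via the factorization $1-a=N_{F(\sqrt a)/F}(1-\sqrt a)$ together with the norm criterion for vanishing of cup products; that criterion is the one ingredient you invoke without proof, but it is a standard citable fact and is precisely the input used in Milnor's proof, so your route coincides with the cited one.
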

The \textit{Milnor Conjecture} consists of the assertion that $s$ and $h$ are graded rings isomorfisms, which makes the fuctors
$K_*F/2K_*F,W_g(F),H^*(G,\mathbb Z/2\mathbb Z)$ isomorphic.

\subsection{Dickmann-Miraglia K-theory for Special Groups}

  There are some generalizations of Milnor's K-theory. In the quadratic forms context, maybe the most significant one is the Dickmann-Miraglia K-theory of Special Groups. It is a main tool in the proof of Marshall's and Lam's Conjecture. In this section, we get some definitions and results from \cite{dickmann2000special} and \cite{dickmann2006algebraic}.

  Firstly, we make a brief summary on special groups. Let $A$ be a set and $\equiv$ a binary relation on $A\times A$. We extend $\equiv$ to a binary relation $\equiv_n$ on $A^n$, by induction on $n\ge1$, as follows:
\begin{enumerate}[i -]
\item $\equiv_1$ is the diagonal relation $\Delta_A \subseteq A \times A$.

 \item $\equiv_2=\equiv$.
 \item If $n \geq 3$, $\langle a_1,...,a_n\rangle\equiv_n\langle b_1,...,b_n\rangle$ if and only there are  $x,y,z_3,...,z_n\in A$ such that
 $$\langle a_1,x\rangle\equiv\langle b_1,y\rangle,\,
 \langle a_2,...,a_n\rangle\equiv_{n-1}\langle x,z_3,...,z_n\rangle\mbox{ and }\langle b_2,...,b_n\rangle\equiv_{n-1}\langle y,z_3,...,z_n\rangle.$$
\end{enumerate}

Whenever clear from the context, we frequently abuse notation and indicate the afore-described extension $\equiv$ by the same symbol.

\begin{defn}[Special Group, 1.2 of \cite{dickmann2000special}]\label{defn:sg}
 A \textbf{special group} is a tuple $(G,-1,\equiv)$, where $G$ is a group of exponent 2,
i.e, $g^2=1$ for all $g\in G$; $-1$ is a distinguished element of $G$, and $\equiv\subseteq G\times
G\times G\times G$ is a relation (the special relation), satisfying the following axioms for all
$a,b,c,d,x\in G$:
\begin{description}
 \item [SG 0] $\equiv$ is an equivalence relation on $G^2$;
 \item [SG 1] $\langle a,b\rangle\equiv \langle b,a\rangle$;
 \item [SG 2] $\langle a,-a\rangle\equiv\langle1,-1\rangle$;
 \item [SG 3] $\langle a,b\rangle\equiv\langle c,d\rangle\Rightarrow ab=cd$;
 \item [SG 4] $\langle a,b\rangle\equiv\langle c,d\rangle\Rightarrow\langle
a,-c\rangle\equiv\langle-b,d\rangle$;
 \item [SG 5]
$\langle a,b\rangle\equiv\langle c,d\rangle\Rightarrow\langle ga,gb\rangle\equiv\langle
gc,gd\rangle,\,\mbox{for all }g\in G$.
 \item [SG 6 (3-transitivity)] the extension of $\equiv$ for a binary relation on $G^3$ is a
transitive relation.
\end{description}
\end{defn}

A group of exponent 2, with a distinguished element $-1$, satisfying the axioms SG0-SG3 and SG5 is called a {\bf  proto special group}; a \textbf{pre special group} is a proto special group that also satisfy SG4. Thus a \textbf{special group} is a pre-special group that satisfies SG6 (or, equivalently, for each $n \geq 1$, $\equiv_n$ is an equivalence relation on $G^n$).

A \textbf{$n$-form} (or form of dimension $n\ge1$) is an $n$-tuple of elements of a pre-SG $G$. An element $b\in G$ is \textbf{represented} on $G$ by the form $\varphi=\langle a_1,...,a_n\rangle$, in symbols $b\in D_G(\varphi)$, if there exists $b_2,...,b_n\in G$ such that $\langle b,b_2,...,b_n\rangle\equiv\varphi$.

A pre-special group (or special group)
$(G,-1,\equiv)$ is:\\
$\bullet$ \ \textbf{formally real} if $-1 \notin \bigcup_{n \in \mathbb{N}} D_G( n\langle 1 \rangle)$\footnote{Here the notation $n\langle 1 \rangle$ means the form $\langle a_1,...,a_n\rangle$ where $a_j=1$ for all $j=1,...,n$. In other words, $n\langle 1 \rangle$ is the form $\langle 1 ,...,1\rangle$ with $n$ entries equal to 1.} ;\\
$\bullet$ \ \textbf{reduced} if it is formally real and, for each $a \in G$, $a \in D_G(\langle 1, 1 \rangle)$ iff $a =1$.

% Morfismos
\begin{defn}[1.1 of \cite{dickmann2000special}]\label{defnmorph}
 A map $\xymatrix{(G,\equiv_G,-1)\ar[r]^f & (H,\equiv_H,-1)}$ between pre-special groups is a \textbf{morphism
of pre-special groups or PSG-morphism} if $f:G\rightarrow H$ is a homomorphism of groups, $f(-1)=-1$ and for all
$a,b,c,d\in G$
$$\langle a,b\rangle\equiv_G\langle c,d\rangle\Rightarrow
\langle f(a),f(b)\rangle\equiv_H\langle f(c),f(d)\rangle$$
A \textbf{morphism of special groups or SG-morphism} is a pSG-morphism between the corresponding pre-special groups. $f$
will be an isomorphism if is bijective and $f,f^{-1}$ are PSG-morphisms.
\end{defn}

It can be verified that a special group  $G$ is formally real iff it admits some SG-morphism $f : G \to 2$. The category of special groups (respectively reduced special groups) and their morphisms will be denoted by $\mathcal{SG}$
(respectively $\mathcal{RSG}$).

% A K-teoria dos Grupos especiais.
\begin{defn}[The Dickmann-Miraglia K-theory \cite{dickmann2006algebraic}]\label{defn:ksg}
 For each special group $G$ (written multiplicatively) we associate a graded ring
 $$k_*G=(k_0G,k_1G,...,k_nG,...)$$
 as follows: $k_0G:=\mathbb F_2$ and $k_1G:=G$ written additively. With this purpose, we fix the canonical ``logarithm'' isomorphism
 $\lambda:G\rightarrow k_1G$, $\lambda(ab)=\lambda(a)+\lambda(b)$. Observe that $\lambda(1)$ is the zero of $k_1G$ and $k_1G$ has exponent 2, i.e, $\lambda(a)=-\lambda(a)$ for all $a\in G$. In the sequel, we define $k_*G$ by the quotient of the $\mathbb F_2$-graded algebra
 $$(\mathbb F_2,k_1G,k_1G\otimes_{\mathbb F_2} k_1G,k_1G\otimes_{\mathbb F_2} k_1G\otimes_{\mathbb F_2} k_1G,...)$$
 by the (graded) ideal generated by $\{\lambda(a)\otimes\lambda(ab),\,a\in D_G(1,b)\}$. In other words, for each $n\ge2$,
$$k_nG:=T^n(k_1G)/Q^n(G),$$
where
$$T^n(k_1G):=k_1G\otimes_{\mathbb F_2} k_1G\otimes_{\mathbb F_2}...\otimes_{\mathbb F_2} k_1G$$
and $Q^n(G)$ is the subgroup generated by all expressions of type $\lambda(a_1)\otimes\lambda(a_2)\otimes...\otimes\lambda(a_n)$ such that for
some $i$ with $1\le i< n$, there exist $b\in G$ such that $a_i\in D_G(1,b)$ and $a_i=a_{i+1}b$, which in symbols, means
\begin{align*}
 Q^n(G)&:=\langle\{\lambda(a_1)\otimes\lambda(a_2)\otimes...\otimes\lambda(a_n):\mbox{ exists }1\le i< n\mbox{ and }\,b\in G \\
 &\mbox{such that }a_i=a_{i+1}b\mbox{ and }a_i\in D_G(1,b)\}\rangle.
\end{align*}
\end{defn}

Since $\lambda(a)+\lambda(a)=0$ for all $a\in k_1G$, it follow that $\eta+\eta=0$ for all $\eta\in k_nG$, so this is a group of exponent 2.

Moreover, we will denote ``$\lambda(a_1)\otimes\lambda(a_2)\otimes...\otimes\lambda(a_n)$'' simply by
``$\lambda(a_1)\lambda(a_2)...\lambda(a_n)$''. Clearly, $k_\ast(G)$ is a graded ring, and in particular, a ring, so that we are able to multiply
elements $\eta\in k_nG$, $\tau\in k_mG$. Whenever we want to do this, we will denote $\eta\cdot\tau$,
in order to avoid confusion with the simplifications described above.

Finally, since we only take tensorial products with parameters in $\mathbb F_2$, we abbreviate ``$A\otimes_{\mathbb F_2}B$'' simply by
``$A\otimes B$''. In this way, $T^n(k_1G)$ we will be denoted simply by
$$T^n(k_1G)=k_1G\otimes k_1G\otimes...\otimes k_1G.$$

Next, we have a result that approximates Dickmann-Miraglia's K-theory with the Milnor's reduced K-theory:

\begin{prop}[2.1 \cite{dickmann2006algebraic}]\label{2.1kt}
 Let $G$ be a special group, $x,y,a_1,...,a_n\in G$ and $\sigma$ be a permutation on $n$ elements.
 \begin{enumerate}[a -]
  \item In $k_2G$, $\lambda(a)^2=\lambda(a)\lambda(-1)$. Hence in $k_mG$,
$\lambda(a)^m=\lambda(a)\lambda(-1)^{m-1}$, $m\ge2$;
\item In $k_2G$, $\lambda(a)\lambda(-a)=\lambda(a)^2=0$;
\item In $k_nG$,
$\lambda(a_1)\lambda(a_2)...\lambda(a_n)=\lambda(a_{\sigma 1})\lambda(a_{\sigma
2})...\lambda(a_{\sigma n})$;
\item For $n\ge1$ and $\xi\in k_nG$, $\xi^2=\lambda(-1)^n\xi$;
\item If $G$ is a reduced special group, then $x\in D_G(1,y)$ and $\lambda(y)\lambda(a_1)...\lambda(a_n)=0$ implies
$$\lambda(x)\lambda(a_1)\lambda(a_2)...\lambda(a_n)=0.$$
 \end{enumerate}
\end{prop}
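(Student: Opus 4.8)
The plan is to reduce everything to the single generating relation behind $Q^2(G)$ --- namely that $\lambda(a)\lambda(ab)=0$ in $k_2G$ whenever $a\in D_G(\langle 1,b\rangle)$ --- together with graded commutativity and the fact that every element of $k_\ast G$ has additive order $2$. For (b) I would first note that axiom SG2 gives $\langle a,-a\rangle\equiv\langle 1,-1\rangle$, so $a\in D_G(\langle 1,-1\rangle)$ for every $a$; taking $b=-1$ (whence $ab=-a$) in the generating relation yields $\lambda(a)\lambda(-a)=0$. Expanding $\lambda(-a)=\lambda(-1)+\lambda(a)$ and cancelling with additive order $2$ turns this into $0=\lambda(a)\lambda(-1)+\lambda(a)^2$, which is exactly (a), $\lambda(a)^2=\lambda(a)\lambda(-1)$; the power formula $\lambda(a)^m=\lambda(a)\lambda(-1)^{m-1}$ then drops out by induction on $m$.

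For (c) I would establish the two-variable case and bootstrap. Applying (b) to $c=xy$ gives $\lambda(xy)\lambda(-xy)=0$; setting $A=\lambda(x)$, $B=\lambda(y)$, $N=\lambda(-1)$, so that $\lambda(xy)=A+B$ and $\lambda(-xy)=N+A+B$, I expand $(A+B)(N+A+B)$ without assuming commutativity, substitute $A^2=AN$ and $B^2=BN$ from (a), and cancel repeated terms; what survives is $AB+BA=0$, i.e. $\lambda(x)\lambda(y)=\lambda(y)\lambda(x)$. Since every permutation is a product of adjacent transpositions and $k_\ast G$ is a graded ring, an adjacent swap inside $\lambda(a_1)\cdots\lambda(a_n)$ amounts to replacing the degree-$2$ factor $\lambda(a_i)\lambda(a_{i+1})$ by the equal element $\lambda(a_{i+1})\lambda(a_i)$ of $k_2G$, and associativity carries the identity up to $k_nG$.

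Part (d) combines (a) and (c). For a generator $\xi=\lambda(c_1)\cdots\lambda(c_n)$, commutativity regroups $\xi^2$ as $\lambda(c_1)^2\cdots\lambda(c_n)^2$; replacing each $\lambda(c_i)^2$ by $\lambda(c_i)\lambda(-1)$ via (a) and collecting the $n$ copies of $\lambda(-1)$ (again by (c)) gives $\xi^2=\lambda(-1)^n\xi$. For a general $\xi=\sum_j\xi_j$ the off-diagonal terms cancel, since $\xi_j\xi_k=\xi_k\xi_j$ by (c) forces $\xi_j\xi_k+\xi_k\xi_j=0$, so $\xi^2=\sum_j\xi_j^2=\lambda(-1)^n\xi$.

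The hard part is (e), the only item that truly uses reducedness. I would recast it through the group homomorphism $\phi\colon G\to k_{n+1}G$, $\phi(z)=\lambda(z)\,\omega$, where $\omega=\lambda(a_1)\cdots\lambda(a_n)$: the hypothesis reads $\phi(y)=0$ and the goal is $\phi(x)=0$ for all $x\in D_G(\langle 1,y\rangle)$. Writing $\lambda(x)=\lambda(xy)+\lambda(y)$ and using $\phi(y)=0$ gives $\phi(x)=\phi(xy)$, and since $\langle 1,y\rangle$ is a binary Pfister form its value set $D_G(\langle 1,y\rangle)$ is a subgroup containing $x,y,xy$. Reducedness enters decisively in the base case $n=0$, where $\phi(y)=\lambda(y)=0$ forces $y=1$, so $x\in D_G(\langle 1,1\rangle)=\{1\}$ and $x=1$. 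Propagating the vanishing of $\phi$ from $y$ to all of $D_G(\langle 1,y\rangle)$ when $n\ge 1$ is the step I expect to be the main obstacle: I would argue by induction on $n$, peeling a factor off $\omega$ and feeding the defining relations of $Q^{n+1}(G)$ through the representation theory of reduced special groups (roundness of binary Pfister forms and transversal representation), which is the delicate part to make precise.
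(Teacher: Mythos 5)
First, a point of comparison: the paper itself gives \emph{no} proof of Proposition \ref{2.1kt} --- it is quoted from Dickmann--Miraglia --- and the nearest argument actually written out is Lemma \ref{bp1}, the analogue for hyperbolic hyperfields. Measured against that, your items (a)--(d) are correct and follow essentially the same route: $\lambda(a)\lambda(-a)=0$ is literally a generator of $Q^2(G)$ (SG2 gives $a\in D_G(\langle 1,-1\rangle)$, so take $b=-1$), item (a) then follows by bilinearity and exponent $2$, commutativity comes from expanding $\lambda(xy)\lambda(-xy)=0$ (exactly the computation proving Lemma \ref{bp1}(c)), and your diagonal/cross-term argument for (d) is sound. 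One thing you pass over silently: what you prove in (b) is $\lambda(a)\lambda(-a)=0$ together with $\lambda(a)^2=\lambda(a)\lambda(-1)$, \emph{not} the printed chain $\lambda(a)\lambda(-a)=\lambda(a)^2=0$. The printed version is false in general: for the two-element reduced special group $\{\pm 1\}$ (the special group of $\mathbb{R}$) every generator of $Q^2$ contains a factor $\lambda(1)=0$, so $Q^2=0$ and $\lambda(-1)^2\neq 0$ in $k_2\cong\mathbb{F}_2$. The ``$=0$'' can only be read as applying to $\lambda(a)\lambda(-a)=\lambda(a)^2+\lambda(a)\lambda(-1)$; your reading tacitly corrects the statement, and a careful proof should flag that rather than ignore it.

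The genuine gap is (e), and you name it yourself. What you actually establish is that $\phi(z)=\lambda(z)\omega$ is a homomorphism with $\phi(y)=0$, hence $\phi(x)=\phi(xy)$, plus the base case $n=0$. This does not advance the problem: $x$ and $xy$ both lie in the subgroup $D_G(\langle 1,y\rangle)$ and nothing is known about either, so constancy of $\phi$ on $\{x,xy\}$ carries no information about vanishing. Worse, the proposed induction on $n$ has no viable step: the hypothesis $\lambda(y)\lambda(a_1)\cdots\lambda(a_n)=0$ means $\lambda(y)\otimes\lambda(a_1)\otimes\cdots\otimes\lambda(a_n)\in Q^{n+1}(G)$, i.e.\ it is a sum of pure tensors with ``bad pairs'' in arbitrary adjacent slots; vanishing in degree $n+1$ neither implies nor is implied by any vanishing statement about peeled degree-$n$ subproducts, so there is no inductive hypothesis to feed. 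To see that the missing step is where all the content sits, note what the cheap manipulations do give: since $x\in D_G(\langle 1,y\rangle)$, the generator $\lambda(x)\lambda(xy)=0$ yields $\lambda(x)\lambda(y)=\lambda(x)\lambda(-1)$, so multiplying the hypothesis by $\lambda(x)$ gives $\lambda(-1)\cdot(\lambda(x)\omega)=0$; concluding $\lambda(x)\omega=0$ from this is precisely injectivity of multiplication by $\lambda(-1)$, i.e.\ property [SMC] of Definition \ref{2.4kt}, a far deeper fact (one of the main themes of the Dickmann--Miraglia paper) that cannot be assumed when proving this basic lemma. Item (e) --- the only item where reducedness does real work --- therefore requires an argument that manipulates explicit representations of $\lambda(y)\otimes\lambda(a_1)\otimes\cdots\otimes\lambda(a_n)$ as sums of generators of $Q^{n+1}(G)$ and converts them, using properties of binary representation in reduced groups, into such a representation for $\lambda(x)\otimes\lambda(a_1)\otimes\cdots\otimes\lambda(a_n)$; your proposal does not supply it, so the proof is incomplete at exactly the point the proposition exists to settle.
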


\begin{defn}[2.4 \cite{dickmann2006algebraic}]\label{2.4kt}
$ $
 \begin{enumerate}[a -]
  \item A reduced special group is [MC] if for all $n\le1$ and all forms $\varphi$ over $G$,
  $$\mbox{For all }\sigma\in X_G,\mbox{ if }\sigma(\varphi)\equiv0\,\mbox{mod }2^n\mbox{ then } \varphi\in I^nG.$$
  \item A reduced special group is [SMC] if for all $n\ge1$,  the multiplication by $\lambda(-1)$ is an injection of $k_nG$ in $k_{n+1}G$.
 \end{enumerate}
\end{defn}

\begin{fat}\label{fat1}
We summarize some properties of the Dickmann-Miraglia K-theory below:
\begin{enumerate}[i -]
    \item An inductive system of special groups
$$\mathcal G=(G_i;\{f_{ij}:i\le j\in I\}),$$
provides an inductive system of graded ring, which nodes are $k_*G_i$ and morphisms are
$$(f_{ij})_*:k_*G_i\rightarrow k_*G_j,\,\mbox{for }i\le j\mbox{ in }I.$$

    \item (4.5 of \cite{dickmann2006algebraic})
 Let $\mathcal{G}=(G_i;\{f_{ij}:i,j\in I,\,i\le j\})$ an inductive system of special groups over a directed poset $I$ and $(G;\{f_i:i\in I\})=\varinjlim\mathcal{G}$. Then
 $$k_*G\cong\varinjlim\limits_{i\in I}k_*G_i.$$

    \item (4.6, 5.1, 5,7 and 6.8 of \cite{dickmann2006algebraic}) The inductive limit, finite products, SG-sum and extension of SMC groups is a SMC group.

    \item (5.1 of \cite{dickmann2006algebraic})
 Let $G_1,...,G_m$ be special groups and $\prod^m_{i=1}G_i$. Then there exists a graded isomorphism
$$\gamma:k_*P\rightarrow\bigoplus^m_{i=1}k_*G_i.$$
\end{enumerate}
\end{fat}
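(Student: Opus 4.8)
The plan is to treat the four assertions according to their nature. Items (ii), (iii) and (iv) are quoted verbatim from the numbered results of \cite{dickmann2006algebraic} cited in their statements, so for those the ``proof'' consists in pointing to 4.5, to 4.6/5.1/5.7/6.8, and to 5.1 of that reference respectively. The only item not attached to a citation, and hence the only one that genuinely requires an argument here, is item (i): the functoriality of the assignment $G \mapsto k_*G$. So I would concentrate the work there and obtain the inductive-system statement as a formal consequence of functoriality.

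To establish (i), first I would construct, for a single SG-morphism $f : G \to H$, an induced graded ring morphism $f_* : k_*G \to k_*H$ degree by degree. In degree $0$ it is the identity $\mathbb{F}_2 \to \mathbb{F}_2$; in degree $1$ it is $f$ itself transported through the logarithms, i.e. $f_*(\lambda(a)) = \lambda(f(a))$, which is a well-defined group homomorphism because $f$ is one and $\lambda$ is an isomorphism; in degree $n \ge 2$ it is the $n$-fold tensor power $T^n(k_1G) \to T^n(k_1H)$ of the degree-$1$ map. The essential point is to verify that this tensor map descends to the quotients, that is, that it sends the relation subgroup $Q^n(G)$ into $Q^n(H)$.

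For that descent I would use the fact that SG-morphisms preserve binary representation. Unwinding the definition via \textbf{SG 3}, one sees that $a \in D_G(1,b)$ is equivalent to $\langle a, ab \rangle \equiv_G \langle 1, b \rangle$ (since $ac = b$ forces $c = ab$ in a group of exponent $2$); applying the defining property of $f$ together with $f(ab) = f(a)f(b)$ gives $\langle f(a), f(a)f(b) \rangle \equiv_H \langle 1, f(b) \rangle$, i.e. $f(a) \in D_H(1, f(b))$. Therefore a typical generator $\lambda(a_1)\cdots\lambda(a_n)$ of $Q^n(G)$, with $a_i = a_{i+1}b$ and $a_i \in D_G(1,b)$ for some $i$, is carried to $\lambda(f(a_1))\cdots\lambda(f(a_n))$ with $f(a_i) = f(a_{i+1})f(b)$ and $f(a_i) \in D_H(1,f(b))$, which is precisely a generator of $Q^n(H)$. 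Hence $f_*(Q^n(G)) \subseteq Q^n(H)$ and $f_*$ is well defined on $k_nG$; compatibility with the graded multiplication is immediate since everything is induced by a tensor map. I expect this descent verification to be the main (and essentially the only) obstacle, the delicate input being the reformulation of $D_G(1,b)$ as a genuine isometry that the morphism can act on.

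Finally, functoriality (namely $\mathrm{id}_* = \mathrm{id}$ and $(g \circ f)_* = g_* \circ f_*$) holds already at the level of tensor algebras and therefore passes to the quotients. Given an inductive system $\mathcal G = (G_i; f_{ij})$, applying $(-)_*$ yields maps $(f_{ij})_* : k_*G_i \to k_*G_j$ that inherit the identity and cocycle relations $f_{ii}=\mathrm{id}$ and $f_{ik}=f_{jk}\circ f_{ij}$, so $(k_*G_i; (f_{ij})_*)$ is an inductive system of graded rings, which is the content of (i).
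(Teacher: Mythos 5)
Your proposal is correct, and its structure matches the paper's treatment: Fact \ref{fat1} is recorded there purely as a summary of quoted results from \cite{dickmann2006algebraic} (items (ii)--(iv) by the citations given, and item (i) being the functoriality statement, 3.3 of that reference), so the paper itself supplies no argument for any of the four items. Your decision to prove item (i) is therefore the only place where real mathematics is needed, and your argument is sound: the key reformulation $a \in D_G(1,b) \Leftrightarrow \langle a, ab\rangle \equiv_G \langle 1,b\rangle$ (via SG3 and exponent $2$) is exactly what lets an SG-morphism carry each generator of $Q^n(G)$ to a generator of $Q^n(H)$, so the tensor-power map descends to the quotients, and functoriality passes down from the tensor level. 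It is worth noting that this is precisely the technique the paper deploys when it proves the hyperfield analogue of this functoriality (Proposition \ref{3.3ktmultiadap}): extend the morphism in degree $1$, verify multilinearity, invoke the universal property of the tensor product, and check that the kernel contains the relation subgroup. So where the paper does argue (in the adapted multivalued setting), your route coincides with its argument; the only difference is that you carry it out for special groups directly rather than citing Dickmann--Miraglia.
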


\subsection{Multifields/Hyperfields}

Roughly speaking, a multiring is a ``ring'' with a multivalued addition, a notion introduced in the 1950s by Krasner's works. The notion of
multiring was joined to the quadratic forms tools by the hands of M. Marshall in the last decade (\cite{marshall2006real}). We gather the basic information about multirings/hyperfields and expand some details that we use in the context of K-theories. For more detailed calculations involving multirings/hyperfields and quadratic forms we indicate to the reader the reference \cite{ribeiro2016functorial} (or even \cite{worytkiewiczwitt2020witt} and \cite{roberto2021quadratic}). Of course, multi-structures is an entire subject of research (which escapes from the "quadratic context"), and in this sense, we indicate the references \cite{pelea2006multialgebras}, \cite{viro2010hyperfields}, \cite{ameri2019superring}.

\begin{defn}\label{defn:multigroupI}
 A multigroup is a quadruple $(G,\ast,r,1)$, where $G$ is a non-empty set, $\ast:G\times G\rightarrow\mathcal
P(G)\setminus\{\emptyset\}$ and $r:G\rightarrow G$
 are functions, and $1$ is an element of $G$ satisfying:
 \begin{enumerate}[i -]
  \item If $ z\in x\ast y$ then $x\in z\ast r(y)$ and $y\in r(x)\ast z$.
  \item $y\in 1\ast x$ if and only if $x=y$.
  \item With the convention $x\ast(y\ast z)=\bigcup\limits_{w\in y\ast z}x\ast w$ and
  $(x\ast y)\ast z=\bigcup\limits_{t\in x\ast y}t\ast z$,
  $$x\ast(y\ast z)=(x\ast y)\ast z\mbox{ for all }x,y,z\in G.$$

	A multigroup is said to be \textbf{commutative} if
  \item $x\ast y=y\ast x$ for all $x,y\in G$.
 \end{enumerate}
 Observe that by (i) and (ii), $1\ast x=x\ast 1=\{x\}$ for all $x\in G$. When $a\ast b=\{x\}$ be a unitary set, we just write
$a\ast b=x$.
\end{defn}

   \begin{defn}[Adapted from Definition 2.1 in \cite{marshall2006real}]\label{defn:multiring}
 A multiring is a sextuple $(R,+,\cdot,-,0,1)$ where $R$ is a non-empty set, $+:R\times
R\rightarrow\mathcal P(R)\setminus\{\emptyset\}$,
 $\cdot:R\times R\rightarrow R$
 and $-:R\rightarrow R$ are functions, $0$ and $1$ are elements of $R$ satisfying:
 \begin{enumerate}[i -]
  \item $(R,+,-,0)$ is a commutative multigroup;
  \item $(R,\cdot,1)$ is a commutative monoid;
  \item $a.0=0$ for all $a\in R$;
  \item If $c\in a+b$, then $c.d\in a.d+b.d$. Or equivalently, $(a+b).d\subseteq a.d+b.d$.
 \end{enumerate}

Note that if $a \in R$, then $0 = 0.a \in (1+ (-1)).a \subseteq 1.a + (-1).a$, thus $(-1). a = -a$.

 $R$ is said to be an hyperring if for $a,b,c \in R$, $a(b+c) = ab + ac$.

 A multring (respectively, a hyperring) $R$ is said to be a multidomain (hyperdomain) if it has no zero divisors. A multring $R$ will be a
multifield if every non-zero element of $R$ has multiplicative inverse; note that hyperfields and multifields coincide. We will use "hyperfield" since this is the prevailing terminology.
\end{defn}

\begin{defn}\label{defn:morphism}
 Let $A$ and $B$ multirings. A map $f:A\rightarrow B$ is a morphism if for all $a,b,c\in A$:
  \begin{enumerate}[i -]
  \item $c\in a+b\Rightarrow f(c)\in f(a)+f(b)$;
  \item $f(-a)=-f(a)$;
  \item $f(0)=0$;
  \item $f(ab)=f(a)f(b)$;
  \item $f(1)=1$.
 \end{enumerate}
\end{defn}

If $A$ and $B$ are multirings, a morphism $f \colon A \to B$ is a \textit{strong morphism} if for all $a,b, c\in A$, if $f(c) \in f(a) + f(b)$, then there are $a',b',c' \in A $ with $f(a') = f(a),f(b') = f(b), f(c') = f(c)$ such that $c' \in a' + b'$.
 In the quadratic context, there is a more detailed analysis in Example 2.10 of \cite{ribeiro2016functorial}.

 \begin{ex}\label{ex:1.3}
$ $
 \begin{enumerate}[a -]
  \item Suppose that $(G,+,0)$ is an abelian group. Defining $a + b = \{a + b\}$ and $r(g)=-g$,
we have that $(G,+,r,0)$ is an abelian multigroup. In this way, every ring, domain and field is a multiring,
multidomain and hyperfield, respectively.

  \item $Q_2=\{-1,0,1\}$ is hyperfield with the usual product (in $\mathbb Z$) and the multivalued sum defined by
relations
  $$\begin{cases}
     0+x=x+0=x,\,\mbox{for every }x\in Q_2 \\
     1+1=1,\,(-1)+(-1)=-1 \\
     1+(-1)=(-1)+1=\{-1,0,1\}
    \end{cases}
  $$

  \item Let $K=\{0,1\}$ with the usual product and the sum defined by relations $x+0=0+x=x$, $x\in K$ and
$1+1=\{0,1\}$. This is a hyperfield  called Krasner's hyperfield \cite{jun2015algebraic}.
  \end{enumerate}
\end{ex}

 Now, another example that generalizes $Q_2=\{-1,0,1\}$. Since this is a new one, we will provide the entire verification that it is a
multiring:

 % Exemplo X_n
\begin{ex}[Kaleidoscope, Example 2.7 in \cite{ribeiro2016functorial}]\label{kaleid}
 Let $n\in\mathbb{N}$ and define
 $$X_n=\{-n,...,0,...,n\} \subseteq \mathbb{Z}.$$
 We define the \textbf{$n$-kaleidoscope multiring} by
$(X_n,+,\cdot,-, 0,1)$, where $- : X_n \to X_n$ is restriction of the  opposite map in $\mathbb{Z}$,  $+:X_n\times
X_n\rightarrow\mathcal{P}(X_n)\setminus\{\emptyset\}$ is given by the rules:
 $$a+b=\begin{cases}
    \{a\},\,\mbox{ if }\,b\ne-a\mbox{ and }|b|\le|a| \\
    \{b\},\,\mbox{ if }\,b\ne-a\mbox{ and }|a|\le|b| \\
    \{-a,...,0,...,a\}\mbox{ if }b=-a
   \end{cases},$$
and $\cdot:X_n\times X_n\rightarrow X_n$ is given by the rules:
 $$a\cdot b=\begin{cases}
    \mbox{sgn}(ab)\max\{|a|,|b|\}\mbox{ if }a,b\ne0 \\
    0\mbox{ if }a=0\mbox{ or }b=0
   \end{cases}.$$
  With the above rules we have that $(X_n,+,\cdot, -, 0,1)$ is a multiring.
\end{ex}

 Now, another example that generalizes $K=\{0,1\}$.

 % Exemplo X_n
\begin{ex}[H-hyperfield, Example 2.8 in \cite{ribeiro2016functorial}]\label{H-multi}
Let $p\ge1$ be a prime integer and $H_p:=\{0,1,...,p-1\} \subseteq \mathbb{N}$. Now, define the binary multioperation and operation in $H_p$ as
follows:
\begin{align*}
 a+b&=
 \begin{cases}H_p\mbox{ if }a=b,\,a,b\ne0 \\ \{a,b\} \mbox{ if }a\ne b,\,a,b\ne0 \\ \{a\} \mbox{ if }b=0 \\ \{b\}\mbox{ if }a=0 \end{cases} \\
 a\cdot b&=k\mbox{ where }0\le k<p\mbox{ and }k\equiv ab\mbox{ mod p}.
\end{align*}
$(H_p,+,\cdot,-, 0,1)$ is a hyperfield such that for all $a\in H_p$, $-a=a$. In fact, these $H_p$ are a kind of generalization of $K$, in the sense that $H_2=K$.
\end{ex}

 There are many natural constructions on the category of multrings as: products, directed inductive limits, quotients by an ideal,
localizations by multiplicative subsets and quotients by ideals.  Now, we present some constructions that will be used further. For the first one, we need to restrict our category:

\begin{defn}[Definition 3.1 of \cite{roberto2021quadratic}]\label{hiperb}
An \textbf{hyperbolic multiring} is a multiring $R$ such that $1-1=R$. The category of hyperbolic multirings and hyperbolic hyperfields will be denoted by $\mathcal{HMR}$ and $\mathcal{HMF}$ respectively.
\end{defn}

Let $F_1$ and $F_2$ be two hyperbolic hyperfields. We define a new hyperbolic hyperfield $(F_1\times_h F_2,+,-,\cdot,(0,0),(1,1))$ by the following: the adjacent set of this structure is
$$F_1\times_h F_2:=(\dot F_1\times \dot F_2)\cup\{(0,0)\}.$$
For $(a,b),(c,d)\in F_1\times_h F_2$ we define
\begin{align}\label{prodmultiop}
    -(a,b)&=(-a,-b),\nonumber\\
    (a,b)\cdot(c,d)&=(a\cdot c,b\cdot d),\nonumber \\
    (a,b)+(c,d)&=\{(e,f)\in F_1\times F_2:e\in a+c\mbox{ and }f\in b+d\}\cap(F_1\times_h F_2).
\end{align}
In other words, $(a,b)+(c,d)$ is defined in order to avoid elements of $F_1\times F_2$ of type $(x,0),(0,y)$, $x,y\ne0$.

\begin{teo}[Product of Hyperbolic Hyperfields]\label{hfproduct}
Let $F_1,F_2$ be hyperbolic hyperfields and $F_1\times_h F_2$ as above. Then $F_1\times_h F_2$ is a hyperbolic hyperfield and satisfy the Universal Property of product for $F_1$ and $F_2$.
\end{teo}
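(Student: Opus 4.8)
The plan is to first isolate the single structural fact about hyperbolic hyperfields that makes everything work. Write $H := F_1 \times_h F_2$ and $\dot F_i := F_i \setminus \{0\}$. Since each $F_i$ is a hyperfield it satisfies full distributivity, and since $1 - 1 = F_i$ (hyperbolicity), for every $a \in \dot F_i$ we get $a + (-a) = a(1 + (-1)) = a \cdot F_i = F_i$, the last equality because multiplication by the unit $a$ is a bijection of $F_i$. Thus in a hyperbolic hyperfield $a + (-a) = F_i$ for all $a \neq 0$; this is the fact I would invoke repeatedly. The routine checks I would dispatch first: multiplication on $H$ is closed (as hyperfields have no zero divisors, a product of two elements with nonzero components again has nonzero components, and $(0,0)$ absorbs), associative, commutative, with unit $(1,1)$; $a \cdot (0,0) = (0,0)$; and every $(a,b) \neq (0,0)$ has inverse $(a^{-1}, b^{-1}) \in \dot F_1 \times \dot F_2 \subseteq H$, so $H$ will be a multifield once the additive axioms are verified. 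Commutativity of $+$ is immediate from componentwise commutativity.

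The first genuine obstacle is that $+$ must be $\mathcal P(H) \setminus \{\emptyset\}$-valued, i.e. that $(a,b) + (c,d)$ is never empty after intersecting the componentwise sum with $H$. When one summand is $(0,0)$ the sum is a singleton. When $a,b,c,d \neq 0$, I would argue that $a + c$ contains a nonzero element: if $c \neq -a$ then $0 \notin a + c$ so the (nonempty) set $a+c$ consists of nonzero elements, while if $c = -a$ then $a + c = a + (-a) = F_1 \ni 1$. Symmetrically $b + d$ contains a nonzero $f$; hence $(e,f) \in \dot F_1 \times \dot F_2 \subseteq H$ witnesses nonemptiness. This is exactly the place where hyperbolicity is essential: without it $a + (-a)$ could be $\{0\}$, and then a sum like $(a,b) + (-a,d)$ with $d \neq -b$ would have first componentwise coordinate $\{0\}$ and thus collapse to $\emptyset$ after intersecting with $H$.

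Next I would verify the multigroup axioms. Reversibility (axiom (i)) transfers coordinatewise: if $(e,f) \in (a,b)+(c,d)$ then $e \in a+c$ and $f \in b+d$, so $a \in e + (-c)$ and $b \in f + (-d)$ in $F_1, F_2$, and since $(a,b) \in H$ already, $(a,b) \in (e,f) + (-c,-d)$; the other reversal is identical. The identity axiom is the computation $(0,0) + (a,b) = \{(a,b)\}$. Associativity is the second hard point. I would prove that both $(a,b) + ((c,d)+(g,h))$ and $((a,b)+(c,d)) + (g,h)$ coincide with the set $S := \{(p,q) \in H : p \in a+c+g \text{ in } F_1,\ q \in b+d+h \text{ in } F_2\}$, the iterated sums being unambiguous by associativity in each $F_i$. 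The inclusion of each triple sum into $S$ is immediate by projecting. The reverse inclusion requires a coordination lemma: given $(p,q) \in S$ I must pick a single intermediate pair $(e,f) \in H$ with $e \in c+g$, $f \in d+h$, $p \in a+e$ and $q \in b+f$. The candidate sets $E = \{e \in c+g : p \in a+e\}$ and $E' = \{f \in d+h : q \in b+f\}$ are nonempty, and a short case analysis shows that $(E \times E') \cap H = \emptyset$ would force, say, $E = \{0\}$ and $0 \notin E'$; but $E = \{0\}$ forces $g = -c$ and $p = a$, whence $c + g = F_1$ and $E = p + (-a) = a + (-a) = F_1 \neq \{0\}$ by hyperbolicity, a contradiction. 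So the coordination always succeeds and associativity holds.

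Finally, hyperbolicity of $H$ is the one-line computation $(1,1) + (-1,-1) = (F_1 \times F_2) \cap H = H$, so $H \in \mathcal{HMF}$. For the universal property in $\mathcal{HMF}$, I would take the coordinate projections $\pi_i : H \to F_i$, which are clearly hyperfield morphisms, and check they are universal: given a hyperbolic hyperfield $D$ with morphisms $f_i : D \to F_i$, set $f(x) := (f_1(x), f_2(x))$. The only nontrivial point is that $f(x) \in H$: since hyperfield morphisms send nonzero (invertible) elements to nonzero elements and $0$ to $0$, we have $f_1(x) = 0 \iff x = 0 \iff f_2(x) = 0$, so $f(x)$ never has exactly one zero coordinate. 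The morphism axioms for $f$ then hold coordinatewise, $\pi_i \circ f = f_i$ by construction, and uniqueness is forced since any competitor $g$ must satisfy $g(x) = (\pi_1 g(x), \pi_2 g(x)) = (f_1(x), f_2(x))$. I expect the main obstacles to be precisely the two spots above, namely the nonemptiness of $+$ and the reverse inclusion in associativity, both resolved by the identity $a + (-a) = F_i$ that hyperbolicity provides.
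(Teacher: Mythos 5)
Your overall strategy coincides with the paper's: verify the multiring axioms for $F_1\times_h F_2$ directly, with the identity $a+(-a)=F_i$ for $a\neq 0$ (supplied by hyperbolicity) doing the work at the critical spots, then establish the universal property via the coordinate projections and the pairing map. Your associativity crux is the paper's crux in different clothing: the paper re-associates componentwise, obtains intermediates $p\in F_1$, $q\in F_2$, and must repair the case $p=0$, $q\neq 0$ (where $(p,q)\notin F_1\times_h F_2$) using hyperbolicity; your ``$E=\{0\}$ and $0\notin E'$'' configuration is exactly that obstruction. Your universal-property argument is in fact more careful than the paper's, since you check that the pairing $(f_1,f_2)$ actually lands in $F_1\times_h F_2$, a point the paper dismisses as immediate. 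Nevertheless, there are two concrete holes. First, you never verify axiom (iv) of Definition~\ref{defn:multiring} (if $c\in a+b$ then $cd\in ad+bd$): your reduction ``$H$ will be a multifield once the additive axioms are verified'' is not correct, because a multiring also requires this semi-distributivity axiom linking $+$ and $\cdot$, and it appears neither among your multiplicative checks nor among the multigroup axioms. The paper devotes its item (iv) precisely to this; given your closure observation the check is routine (apply axiom (iv) componentwise in $F_1$ and $F_2$ and note that products of elements of $H$ stay in $H$), but it cannot be skipped.

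Second, your associativity contradiction carries implicit nonvanishing hypotheses. From $E=\{0\}$ you deduce $g=-c$ and $p=a$, and then conclude ``$c+g=F_1$'' and ``$E=a+(-a)=F_1\neq\{0\}$''; the first conclusion needs $c\neq 0$ and the second needs $a\neq 0$. If $(a,b)=(0,0)$ or $(c,d)=(0,0)$, the argument as written collapses: for instance, with $a=0$ one really does get $E=\{0\}$ with no contradiction from that side, and the contradiction must instead come from showing $0\in E'$ (using that $q=0$ forces $0\in d+h$), or, much more simply, from the remark that associativity is trivial whenever one of the three summands is $(0,0)$. So you should dispose of the degenerate cases at the outset---exactly as the paper does before entering its generic case---after which your coordination lemma, with $a,c\neq 0$ available, closes the proof correctly.
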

\begin{proof}
 We will verify the conditions of definition \ref{defn:multiring} (in a very similar manner as in Theorem 3.3 of \cite{ribeiro2016functorial}). Note that by the definition of multivalued sum in $F_1\times_h F_2$ we have for all $a,c\in F_1$ and all $b,d\in F_2$ that $(a,b)+(c,d)\ne\emptyset$ and $(a,c)-(a,c)=F_1\times_h F_2$ if $a,c\ne0$.
\begin{enumerate}[i -]
 \item In order to prove that $(F_1\times_h F_2,+,-,(0,0))$ is a multigroup we follow the steps below.
 \begin{enumerate}[a -]
  \item We will prove that if $(e,f)\in(a,b)+(c,d)$, then $(a,b)\in(c,d)+ (-e,-f)$ and $(c,d)\in (-a,-b)+(e,f)$.

  If $(a,b)=(0,0)$ (or $(c,d)=(0,0)$) or $(a,b)=(-c,-d)$, then $(e,f)\in(a,b)+(c,d)$ means $(e,f)=(c,d)$ or $e\in a-a$, $f\in b-b$. In both cases we get $(a,b)\in(c,d)+(-e,-f)$ and $(c,d)\in (-a,-b)+(e,f)$.

  Now suppose $a,b,c,d\ne0$ with $a\ne-c$, $b\ne-d$. Let $(e,f)\in(a,b)+(c,d)$. Then $(e,f)\in F_1\times_hF_2$ with $e\in a+c$ and $f\in c+d$. Moreover $a\in c-e$ and $b\in d-f$ with $(a,b)\in F_1\times_hF_2$, implying $(a,b)\in(c,d)+ (-e,-f)$. We prove $(c,d)\in (-a,-b)+(e,f)$ by the very same argument.

  \item Commutativity and $((a,b)\in(c,d)+(0,0))\Leftrightarrow((a=b)\wedge (c=d))$ are direct consequence of the definition of multivaluated sum.

  \item Now we prove the associativity, that is,
  $$[(a,b)+(c,d)]+(e,f)=(a,b)+[(c,d)+(e,f)].$$
  In fact (see the remarks after Lemma 2.4 of \cite{ribeiro2016functorial}), it is enough to show
  $$[(a,b)+(c,d)]+(e,f)\subseteq(a,b)+[(c,d)+(e,f)].$$

  If $(a,b)=0$ or $(c,d)=0$ or $(e,f)=0$ we are done. Now let $a,b,c,d,e,f\ne0$ and $(v,w)\in[(a,b)+(c,d)]+(e,f)$. If $(c,d)=-(e,f)$, we have
  $$(a,b)+[(c,d)+(e,f)]=(a,b)+F_1\times_hF_2=F_1\times_hF_2\supseteq [(a,b)+(c,d)]+(e,f).$$
  If $-(e,f)\in(a,b)+(c,d)$ then $-(a,b)\in(c,d)+(e,f)$ and we have
      $$[(a,b)+(c,d)]+(e,f)=F_1\times_hF_2=(a,b)+[(c,d)+(e,f)]$$

  Now suppose $a,b,c,d,e,f\ne0$, $(c,d)\ne-(e,f)$, $-(e,f)\notin(a,b)+(c,d)$. Let $(x,y)\in[(a,b)+(c,d)]+(e,f)$. Then there exists $(v,w)\in F_1\times_hF_2$ such that $(v,w)\in(a,b)+(c,d)$ and $(x,y)\in(v,w)+(e,f)$. This imply $(v\in a+c)\wedge(x\in v+e)$ and $(w\in b+y)\wedge(y\in d+f)$, so there exists $p\in F_1$, $q\in F_2$ such that
  $(v\in a+p)\wedge(p\in c+e)$ and $(w\in b+q)\wedge(q\in d+f)$. If $p,q=0$ or $p,q\ne0$ we have $(p,q)\in F_1\times_hF_2$, which imply $(v,w)\in [(a,b)+(c,d)]+(e,f)$. If $p=0$ and $q\ne0$ (the case $q=0$ and $p\ne0$ is analogous), then $v=a$ and $c=-e$. Since $a,c\ne0$ and $F_1$ is hyperbolic, we have $a-a=c-c=F_1$. Then $(v\in a-a)\wedge(-a\in c-c)$ and $(w\in b+q)\wedge(q\in d+f)$, with $(-a,q)\in F_1\times_hF_2$ and again, we get $(v,w)\in [(a,b)+(c,d)]+(e,f)$.
 \end{enumerate}

   \item Since $(F_1\times_h F_2,\cdot,(1,1))$ is an abelian group, we conclude that $(F_1\times_h F_2,\cdot,(1,1))$ is a commutative monoid. Beyond this, every nonzero element of $F_1\times_h F_2$ has an inverse.

  \item $(a,b)\cdot(0,0)=(0,0)$ for all $(a,b)\in F_1\times_h F_2$ is direct from definition.

  \item For the distributive property, let $(a,b),(c,d),(e,f)\in F_1\times_h F_2$ and consider $(x,y)\in(e,f)[(a,b)+(c,d)]$. We need to prove that
  \begin{align*}
      \tag{*}(x,y)\in(e,f)\cdot(a,b)+(e,f)\cdot(c,d).
  \end{align*}
  It is the case if $(a,b)=(0,0)$, or $(c,d)=(0,0)$ or $(e,f)=(0,0)$. Moreover (*) also holds if $a,b,c,d,e,f\ne0$ and $(c,d)=-(a,b)$.

  Now suppose $a,b,c,d,e,f\ne0$ and $(c,d)\ne-(a,b)$. Then $(x,y)=(ev,fw)$ for some $(v,w)\in(a,b)+(c,d)$. Since $(e,f)\in F_1\times_h F_2$, $e=0$ iff $f=0$, which imply $(ev,fw)\in F_1\times_h F_2$, with $ev\in ea+ec$ and $fw\in fb+fd$. Therefore $(x,y)=(ev,fw)\in(e,f)\cdot(a,b)+(e,f)\cdot(c,d)$, as desired.
\end{enumerate}
Then $(F_1\times_h F_2,+,-,\cdot,(0,0),(1,1))$ is a hyperbolic hyperfield. Moreover we have projections $\pi_1:F_1\times_h F_2\rightarrow F_1$, $\pi_2:F_1\times_h F_2\rightarrow F_2$ given respectively by the rules $\pi_1(x,y)=x$, $\pi_2(x,y)=y$.

Finally, suppose that $F$ is another hyperfield with morphisms $p_1:F\rightarrow F_1$, $p_2:F\rightarrow F_2$. Consider $(p_1,p_2):F\rightarrow F_1\times_h F_2$ given by the rule
$(p_1,p_2)(x)=(p_1(x),p_2(x))$. It is immediate that $(p_1,p_2)$ is the unique morphism making the following diagram commute
$$\xymatrix@!=5.pc{ & F\ar[dr]^{p_2}\ar[dl]_{p_1}\ar@{.>}[d]^{(p_1,p_2)} & \\
F_1 & F_1\times_hF_2\ar[r]_{p_2}\ar[l]^{p_1} & F_2}$$
so $F_1\times_h F_2$ is the product in the category of hyperbolic hyperfields, completing the proof.
\end{proof}

In order to avoid confusion and mistakes, we denote the binary product in $\mathcal{HMF}$ by $F_1\times_hF_2$. For hyperfields $\{F_i\}_{i\in I}$, we denote the product of this family by
$$\prod^h_{i\in I}F_i,$$
with underlying set defined by
$$\prod^h_{i\in I}F_i:=\left(\prod_{i\in I}\dot F_i\right)\cup\{(0_i)_{i\in I}\}$$
and operations defined by rules similar to the ones defined in \ref{prodmultiop}. If $I=\{1,...n\}$, we denote
$$\prod^h_{i\in I}F_i=\prod^n_{\substack{i=1 \\ [h]}}F_i.$$

\begin{ex}
Note that if $F_1$ (or $F_2$) is not hyperbolic, then $F_1\times_h F_2$ is not a hyperfield. Let $F_1$ be a field (considered as a hyperfield), for example $F_1=\mathbb R$ and $F_2$ be another hyperfield. Then if $a,b\in F_2$, we have
$1-1=\{0\}$, so $(1,a)+(-1,b)=\{0\}\times(a-b)$, and $$[\{0\}\times(a-b)]\cap(F_1\times_h F_2)=\emptyset.$$
\end{ex}

 \begin{prop}[Example 2.6 in \cite{marshall2006real}]\label{defn:strangeloc}
 Fix a multiring $A$ and a multiplicative subset $S$ of $A$ such that $1\in S$. Define an equivalence relation $\sim$
on $A$ by $a\sim b$ if and only if $as=bt$ for some $s,t\in S$. Denote by $\overline a$ the equivalence class of
$a$ and set $A/_mS=\{\overline a:a\in A\}$. Then, we define in agreement with Marshall's notation, $\overline a+\overline
b=\{\overline c:cv\in as+bt,\,\mbox{for some }s,t,v\in S\}$, $-\overline a=\overline{-a}$, and
$\overline{a}\overline{b}=\overline{ab}$.

Then  $A/_mS$ are multirings. Moreover, if $A$ is a hyperring, the same holds for $A/_mS$. The canonical projection $\pi:A\rightarrow A/_mS$ is
a morphism.
 \end{prop}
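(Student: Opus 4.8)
The plan is to verify in turn that $\sim$ is an equivalence relation, that the three operations (sum, product, negation) are well defined on equivalence classes, that the multigroup and multiring axioms of Definition \ref{defn:multiring} hold, and finally that the hyperring identity is preserved and that $\pi$ is a morphism. Throughout, the only tools needed are the axioms of $A$ together with the facts that $S$ is closed under products and contains $1$, so that one may freely ``clear denominators'' by multiplying relations in $A$ by suitable elements of $S$.

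First I would check that $\sim$ is an equivalence relation: reflexivity and symmetry are immediate from $1\in S$ and the symmetric form of the definition, while for transitivity, given $as=bt$ and $bu=cv$ one multiplies the first relation by $u$ and the second by $t$ to obtain $a(su)=c(vt)$, using that $su,vt\in S$. The product and negation descend to the quotient by the same one-line manipulations: if $ap=a'q$ and $br=b'w$ then $(ab)(pr)=(a'b')(qw)$ and $(-a)p=(-a')q$. The substantive well-definedness check is for the sum: given $\overline c\in\overline a+\overline b$, i.e. $cv\in as+bt$, and representatives $a',b'$ with $ap=a'q$ and $br=b'w$, I would multiply the relation $cv\in as+bt$ by $pr\in S$ and apply the weak distributive law (Definition \ref{defn:multiring}(iv)) to obtain $c(vpr)\in a'(qsr)+b'(wtp)$, which exhibits $\overline c\in\overline{a'}+\overline{b'}$; this shows the set $\overline a+\overline b$ depends only on the classes.

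Next come the multigroup axioms. The identity axiom reduces to the observation that $\overline 0+\overline x=\{\overline x\}$, since $0$ is the additive identity of $A$; reversibility (axiom (i)) follows directly from reversibility in $A$ by reading the single relation $xs\in zv+(-y)t$ obtained from $zv\in xs+yt$, with no denominator clearing needed, and taking the witnesses $(v,s,t)$ in the appropriate order. The weak distributive law and the monoid axioms are inherited from $A$ in one line each. I expect associativity of the multivalued sum to be the main obstacle: starting from $\overline w\in(\overline a+\overline b)+\overline c$, witnessed by $ev_1\in as_1+bt_1$ and $wv_2\in es_2+ct_2$, the strategy is to multiply the first relation by $s_2$ and the second by $v_1$ so that the term $ev_1s_2$ becomes common to both, substitute, and then invoke associativity of $+$ in $A$ to rebracket $wv_1v_2\in a(s_1s_2)+\bigl(b(t_1s_2)+c(t_2v_1)\bigr)$; reading off an element of the inner sum produces the witness required for $\overline w\in\overline a+(\overline b+\overline c)$. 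The difficulty here is purely the bookkeeping of $S$-multipliers needed to align the two containments before associativity in $A$ can be applied.

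Finally, for the hyperring claim the inclusion $\overline a(\overline b+\overline c)\subseteq\overline a\,\overline b+\overline a\,\overline c$ is just the weak distributive law; for the reverse inclusion I would take $\overline z\in\overline{ab}+\overline{ac}$, so that $zv\in a(bs)+a(ct)$, and use the full distributive identity in $A$ to rewrite this as $a(bs+ct)$, whence $zv=ay$ for some $y\in bs+ct$, giving $\overline z=\overline a\,\overline y$ with $\overline y\in\overline b+\overline c$. The morphism properties of $\pi(a)=\overline a$ are then immediate: each of the five clauses of Definition \ref{defn:morphism} holds by taking all the auxiliary elements of $S$ equal to $1$.
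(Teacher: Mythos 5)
The paper itself offers no proof of this proposition --- it is imported verbatim as Example 2.6 of Marshall's paper --- so there is no in-paper argument to compare against; your direct verification is correct, and its key steps (denominator-clearing for well-definedness of the sum, aligning the $S$-multipliers so that associativity in $A$ can be applied, and the use of full distributivity together with $\overline{zv}=\overline{z}$, since $v\sim 1$, for the hyperring claim) are all sound. The only tacit point worth flagging is that membership $\overline{c}\in\overline{a}+\overline{b}$ must also be independent of the chosen representative $c$ (not just of $a$ and $b$), which is needed, e.g., when reading off the relation $zv\in xs+yt$ in the reversibility step; this follows by exactly the same one-line multiplication by elements of $S$ that you already use elsewhere.
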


 \begin{prop}[2.19 in \cite{ribeiro2016functorial}]
  Let $A,B$ be a multiring and $S\subseteq A$ a multiplicative subset of $A$. Then for every morphism
$f:A\rightarrow B$ such that $f[S]=\{1\}$, there exist a unique morphism $\tilde f:A/_mS\rightarrow B$ such
that the following diagram commute:
$$\xymatrix{A\ar[r]^{\pi}\ar[dr]_{f} & A/_mS\ar[d]^{!\tilde f} \\ & B}$$
where $\pi:A\rightarrow A/_mS$ is the canonical projection $\pi(a)=\overline a$.
 \end{prop}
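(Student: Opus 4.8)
The plan is to define $\tilde f$ on representatives by the rule $\tilde f(\overline a) := f(a)$ and then show that this prescription yields the unique morphism completing the triangle. The first step is to verify that $\tilde f$ is well defined, i.e.\ independent of the chosen representative. If $\overline a = \overline b$ in $A/_mS$, then by the definition of the equivalence relation $\sim$ in Proposition \ref{defn:strangeloc} there are $s,t\in S$ with $as = bt$ in $A$. Applying $f$ and using multiplicativity (condition (iv) of Definition \ref{defn:morphism}) gives $f(a)f(s) = f(b)f(t)$; since $f[S] = \{1\}$ we have $f(s) = f(t) = 1$, hence $f(a) = f(b)$. Thus $\tilde f$ is well defined, and by construction $\tilde f \circ \pi = f$, since $\pi(a) = \overline a$.

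Next I would check that $\tilde f$ satisfies the five conditions of Definition \ref{defn:morphism}. The multiplicative conditions fall out immediately from the corresponding conditions for $f$ together with the formulas $\overline a\,\overline b = \overline{ab}$ and $-\overline a = \overline{-a}$ recorded in Proposition \ref{defn:strangeloc}: explicitly $\tilde f(\overline a\,\overline b) = f(ab) = f(a)f(b) = \tilde f(\overline a)\tilde f(\overline b)$, $\tilde f(-\overline a) = f(-a) = -f(a) = -\tilde f(\overline a)$, $\tilde f(\overline 0) = f(0) = 0$, and $\tilde f(\overline 1) = f(1) = 1$.

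The only condition requiring genuine care is (i), compatibility with the multivalued sum, because the addition in $A/_mS$ is defined with auxiliary scalars drawn from $S$. Suppose $\overline c \in \overline a + \overline b$ in $A/_mS$; by the definition in Proposition \ref{defn:strangeloc} there exist $s,t,v\in S$ with $cv \in as + bt$ in $A$. Since $f$ is a morphism, condition (i) for $f$ yields $f(cv) \in f(as) + f(bt)$, and multiplicativity together with $f[S] = \{1\}$ collapses this to $f(c) \in f(a) + f(b)$, that is, $\tilde f(\overline c) \in \tilde f(\overline a) + \tilde f(\overline b)$. This confirms that $\tilde f$ is a morphism.

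Finally, uniqueness follows because $\pi$ is surjective: every class in $A/_mS$ has the form $\overline a = \pi(a)$, so any morphism $g$ with $g\circ\pi = f$ must satisfy $g(\overline a) = g(\pi(a)) = f(a) = \tilde f(\overline a)$, whence $g = \tilde f$. I expect the main (and essentially only) obstacle to be the bookkeeping in condition (i), where one must carefully track how the auxiliary elements $s,t,v\in S$ are neutralized by the hypothesis $f[S] = \{1\}$; every remaining step is a formal consequence of $f$ being a morphism and of the explicit localization formulas.
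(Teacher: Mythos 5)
Your proposal is correct: defining $\tilde f(\overline a) := f(a)$, checking well-definedness and the morphism conditions (with the only delicate point being condition (i), where the auxiliary elements $s,t,v\in S$ are killed by $f[S]=\{1\}$), and deducing uniqueness from surjectivity of $\pi$ is exactly the canonical argument for this universal property. The paper itself states this result without proof, citing Proposition 2.19 of \cite{ribeiro2016functorial}, and your argument matches the standard proof given there.
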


\begin{prop}[3.13 of \cite{ribeiro2016functorial}]\label{sg.to.mf}
 Let $(G,\equiv,-1)$ be a special group and define $M(G)=G\cup\{0\}$ where $0:=\{G\}$\footnote{Here,
the choice of the zero element was ad hoc. Indeed, we can define $0:=\{x\}$ for any $x\notin G$.}. Then
$(M(G),+,-,\cdot,0,1)$ is a hyperfield, where
\begin{itemize}
   \item $a\cdot b=\begin{cases}0\,\mbox{if }a=0\mbox{ or }b=0 \\ a\cdot
b\,\mbox{otherwise}\end{cases}$
   \item $-(a)=(-1)\cdot a$
   \item $a+b=\begin{cases}\{b\}\,\mbox{if }a=0 \\ \{a\}\,\mbox{if }b=0\\ M(G)\,\mbox{if
}a=-b,\,\mbox{and }a\ne0
\\
D_G(a,b)\,\mbox{otherwise}\end{cases}$
  \end{itemize}
\end{prop}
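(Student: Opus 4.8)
The plan is to verify the axioms of Definition \ref{defn:multiring} one family at a time, isolating from the outset the handful of elementary facts about the representation relation that do the real work. I will freely use the following standard consequences of the special-group axioms SG0--SG6 (see \cite{dickmann2000special}): the reflexivity $a \in D_G(a,b)$ and symmetry $D_G(a,b) = D_G(b,a)$ (from SG1 and reflexivity of $\equiv$); the multiplicativity $g\cdot D_G(a,b) = D_G(ga,gb)$ for $g \in G$ (from SG5, since $x \in D_G(a,b)$ iff $\langle x, abx\rangle \equiv \langle a,b\rangle$); the identity $D_G(a,-a) = a\cdot D_G(1,-1) = G$ (from SG2, as $\langle g,-g\rangle \equiv \langle 1,-1\rangle$ exhibits every $g$ as represented by $\langle 1,-1\rangle$); the reversibility $z \in D_G(x,y) \Rightarrow x \in D_G(z,-y)$ (two applications of SG4 together with SG1); and the symmetry of isotropy $-c \in D_G(a,b) \Leftrightarrow -a \in D_G(b,c) \Leftrightarrow -b \in D_G(a,c)$ (from permutation-invariance of $\equiv_3$, i.e. SG6). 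I also note that for fixed nonzero $g\in G$ the map $x \mapsto g\cdot x$ is a bijection of $M(G)$ fixing $0$, which lets me reduce several cases by scaling.

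The multiplicative and degenerate structure comes first and is essentially bookkeeping. That $(M(G),\cdot,1)$ is a commutative monoid and that $a\cdot 0 = 0$ are immediate, since $\cdot$ is just the group operation of $G$ extended by an absorbing $0$; every nonzero $a$ is its own inverse (exponent $2$), so $M(G)$ will be a multifield, hence a hyperfield, once the additive structure is settled. For the multigroup axioms of Definition \ref{defn:multigroupI}: axiom (ii) holds because $0+x = \{x\}$ by definition; commutativity (iv) is a short case split using $D_G(a,b)=D_G(b,a)$ and the symmetry $a=-b \Leftrightarrow b=-a$; and the inverse axiom (i), ``$z\in x+y \Rightarrow x\in z+(-y)$ and $y\in(-x)+z$'', splits into the degenerate cases (some argument $0$, or $z=y$), handled directly by $0+x=\{x\}$ and $D_G(a,-a)=G$, and the generic case $z\in D_G(x,y)$ with $x,y\neq 0$, $z\neq y$, which is exactly the reversibility $x\in D_G(z,-y)$.

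The associativity axiom (iii), $(a+b)+c = a+(b+c)$, is where essentially all the content lies, and I expect it to be the main obstacle. In the fully generic case ($a,b,c\neq 0$, no antipodal pair, and no intermediate antipodal collision) both sides equal the value set of the ternary form,
$$\bigcup_{w\in D_G(b,c)} D_G(a,w) \;=\; D_G(a,b,c) \;=\; \bigcup_{v\in D_G(a,b)} D_G(v,c),$$
which is precisely the content of the definition of $\equiv_3$ together with 3-transitivity SG6 (one inclusion unwinds the definition of $\equiv_3$, the reverse composes two isometries using SG6). The delicate point is the degenerate configurations: when $a=-b$ (or a symmetric variant) an inner sum becomes all of $M(G)$, and when an intermediate sum meets an antipode, i.e. $-c\in D_G(a,b)$, a factor of the form $(-c)+c = M(G)$ appears. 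In each such case I will show the whole expression collapses to $M(G)$ on both sides; the antipodal term is forced to occur either by reflexivity $x\in D_G(x,y)$ or by the isotropy condition itself, and the symmetry $-c\in D_G(a,b)\Leftrightarrow -a\in D_G(b,c)$ guarantees the collapse is triggered simultaneously under both bracketings. Organizing the case analysis so these collapses are detected symmetrically in $a,b,c$, rather than checked bracket-by-bracket, is the part that requires care.

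Finally, distributivity (iv of Definition \ref{defn:multiring}) I will establish in the strong equality form required for a hyperring (alternatively one could prove only the inclusion and invoke the coincidence of multifields and hyperfields noted after Definition \ref{defn:multiring}). For $a=0$ both sides equal $\{0\}$. For $a\neq 0$, multiplicativity gives $a\cdot D_G(b,c) = D_G(ab,ac)$, and since $b=-c$ exactly when $ab=-ac$, the piecewise rules for $+$ on $M(G)$ match on the nose: in the antipodal case both $a\cdot(b+c)$ and $ab+ac$ equal $M(G)$ (using $a\cdot M(G)=M(G)$ for $a\neq 0$), otherwise both equal $D_G(ab,ac)$, with the $0$-subcases immediate. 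This yields $a(b+c)=ab+ac$ and completes the verification that $M(G)$ is a hyperfield.
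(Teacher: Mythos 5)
The paper does not actually prove this proposition: it is imported as a preliminary, citing Proposition 3.13 of \cite{ribeiro2016functorial}, so there is no internal proof here to compare yours against. Judged on its own terms, your verification is correct and is essentially the standard one: direct checking of the axioms of Definitions \ref{defn:multigroupI} and \ref{defn:multiring}, with all the real content concentrated in associativity. Your key ingredients are all genuine special-group facts: reflexivity $a\in D_G(a,b)$ (SG0), multiplicativity $gD_G(a,b)=D_G(ga,gb)$ (SG5), reversibility $z\in D_G(x,y)\Rightarrow x\in D_G(z,-y)$ (SG0, SG1 and two applications of SG4), and the decomposition $D_G(\langle a,b,c\rangle)=\bigcup_{w\in D_G(b,c)}D_G(a,w)$, whose nontrivial inclusion composes two isometries via SG6. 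Your treatment of the degenerate configurations is also sound: the collapse of both bracketings to $M(G)$ is triggered simultaneously because $-c\in D_G(a,b)\Leftrightarrow -a\in D_G(b,c)$ (which, as a side remark, needs only reversibility plus SG5, not the full strength of SG6 that you invoke), and in the antipodal case $a=-b$ the corresponding term $(-b)+b=M(G)$ appears under the other bracketing because $b\in D_G(b,c)$. The one point you should make explicit when writing this up in full is the permutation invariance of $\equiv_3$ (valid in special groups, though not in general pre-special groups), which is needed to identify $\bigcup_{v\in D_G(a,b)}D_G(v,c)$ with $D_G(\langle a,b,c\rangle)$: the inductive definition of $\equiv_3$ privileges the first coordinate, so the two bracketings unwind to value sets of differently ordered ternary forms.
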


\begin{cor}[3.14 of \cite{ribeiro2016functorial}]\label{cor:equiv1}
 The correspondence $G\mapsto M(G)$ extends to a faithful functor $M:\mathcal{SG}\rightarrow MField$.
\end{cor}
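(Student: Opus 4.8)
The plan is to define $M$ on morphisms, verify it lands in hyperfield morphisms, and then read off functoriality and faithfulness, the crux being that an SG-morphism induces a morphism of hyperfields. Given an SG-morphism $f : (G, \equiv_G, -1) \to (H, \equiv_H, -1)$, I would set $M(f) : M(G) \to M(H)$ by $M(f)(0) = 0$ and $M(f)(a) = f(a)$ for $a \in G$. Since $f$ is a group homomorphism with $f(-1) = -1$ that carries $G$ into $H \subseteq M(H) \setminus \{0\}$, the non-additive clauses of Definition \ref{defn:morphism} — namely $M(f)(1) = 1$, $M(f)(ab) = M(f)(a)\,M(f)(b)$, $M(f)(-a) = -M(f)(a)$ and $M(f)(0) = 0$ — are immediate from the corresponding properties of $f$ together with the definitions of product and negation on $M(G)$ and $M(H)$ recorded in Proposition \ref{sg.to.mf}.

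The main obstacle is the sum-preservation clause: whenever $c \in a + b$ in $M(G)$, I must show $M(f)(c) \in M(f)(a) + M(f)(b)$ in $M(H)$. I would proceed case by case along the definition of $+$ in Proposition \ref{sg.to.mf}. The degenerate cases are direct: if $a = 0$ or $b = 0$ the sum is a singleton sent to the corresponding singleton; if $a = -b$ with $a \neq 0$, then $f(a) = -f(b) \neq 0$, so $M(f)(a) + M(f)(b) = M(H)$ contains every image. The essential case is $a + b = D_G(a,b)$ with $a, b \neq 0$ and $a \neq -b$, where I must verify that $f$ maps $D_G(a,b)$ into $D_H(f(a), f(b))$. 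This is exactly where the morphism axiom of Definition \ref{defnmorph} is used: if $c \in D_G(a,b)$ there is $d \in G$ with $\langle c, d \rangle \equiv_G \langle a, b \rangle$, and since $f$ preserves the binary relation $\equiv$ we obtain $\langle f(c), f(d) \rangle \equiv_H \langle f(a), f(b) \rangle$, that is, $f(c) \in D_H(f(a), f(b))$. I expect this step — reducing preservation of representation to preservation of $\equiv$ on $2$-forms — to be the heart of the argument.

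With $M(f)$ established as a hyperfield morphism, functoriality is routine: both $M(\mathrm{id}_G)$ and $\mathrm{id}_{M(G)}$ fix $0$ and agree with the identity on $G$, while for composable $f, g$ the maps $M(g \circ f)$ and $M(g) \circ M(f)$ both send $0 \mapsto 0$ and $a \mapsto g(f(a))$ on $G$; hence $M$ respects identities and composition. Finally, faithfulness follows at once from the construction, since $M(f)$ restricts to $f$ on $G = M(G) \setminus \{0\}$: if $M(f) = M(f')$ then restricting to $G$ forces $f = f'$, so $f \mapsto M(f)$ is injective on each hom-set and $M : \mathcal{SG} \to MField$ is a faithful functor.
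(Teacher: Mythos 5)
Your proposal is correct and follows essentially the same route as the cited result: extend $f$ by $0$, reduce preservation of the multivalued sum to the SG-morphism axiom on binary forms via the identification $a+b = D_G(a,b)$ in the non-degenerate case, and read off functoriality and faithfulness from the fact that $M(f)$ restricts to $f$ on $G = M(G)\setminus\{0\}$. One small point worth making explicit: in the essential case the target sum $M(f)(a)+M(f)(b)$ equals $D_H(f(a),f(b))$ only when $f(a)\neq -f(b)$ (it equals all of $M(H)$ when $f(a)=-f(b)$, which can occur even though $a\neq -b$, as $f$ need not be injective); your verification that $f(c)\in D_H(f(a),f(b))$ still suffices in both sub-cases, since that set is contained in the sum either way.
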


\begin{defn}[3.15-3.19 of \cite{ribeiro2016functorial}]\label{smf}
A hyperfield $F$ is a \textbf{special multifield} if there exist a special group $G$ such that $F=M(G)$. The category of special multifields will be denoted by $\mathcal{SMF}$.
\end{defn}

\begin{defn}[Definition 3.2 of \cite{roberto2021quadratic}]
A \textbf{Dickmann-Miraglia multiring (or DM-multiring for short)} \footnote{The name ``Dickmann-Miraglia'' is given in honor to professors Maximo Dickmann and Francisco Miraglia, the creators of the special group theory.} is a pair $(R,T)$ such that $R$ is a multiring, $T\subseteq R$ is a multiplicative subset of $R\setminus\{0\}$, and $(R,T)$ satisfies the following properties:
\begin{description}
\item [DM0] $R/_mT$ is hyperbolic.
 \item [DM1] If $\overline{a}\ne0$ in $R/_mT$, then $\overline a^2=\overline 1$ in $R/_mT$. In other words, for all $a\in R\setminus\{0\}$,
there are $r,s\in T$ such that $ar=s$.
 \item [DM2] For all $a\in R$, $(\overline 1-\overline a)(\overline 1-\overline a)\subseteq(\overline 1-\overline a)$ in
$R/_mT$.
 \item [DM3] For all $a,b,x,y,z\in R\setminus\{0\}$, if
 $$\begin{cases}\overline a\in \overline x+\overline b \\ \overline b\in \overline y+\overline z\end{cases}\mbox{ in }R/_mT,$$
 then exist $\overline v\in\overline x+\overline z$ such that $\overline a\in\overline y+\overline
v$ and $\overline{vb}\in\overline{xy}+\overline{az}$ in $R/_mT$.
\end{description}

If $R$ is a ring, we just say that $(R,T)$ is a DM-ring, or $R$ is a DM-ring. A Dickmann-Miraglia hyperfield (or DM-hyperfield) $F$ is a
hyperfield such that $(F,\{1\})$ is a DM-multiring (satisfies DM0-DM3). In other words, $F$ is a DM-hyperfield if $F$ is hyperbolic and for all
$a,b,v,x,y,z\in F^*$,
\begin{enumerate}[i -]
 \item $a^2=1$.
 \item $(1-a)(1-a)\subseteq(1-a)$.
 \item $\mbox{If }\begin{cases}a\in x+b \\ b\in y+z\end{cases}\mbox{ then there exists }v\in x+z\mbox{ such that }a\in y+v\mbox{ and }vb\in xy+az$.
\end{enumerate}
\end{defn}

\begin{teo}[Theorem 3.4 of \cite{roberto2021quadratic}]\label{teopmf}
 Let $(R,T)$ be a DM-multiring and  denote
 $$Sm(R,T)=(R/_mT).$$
 Then $Sm(R)$ is a special hyperfield (thus $Sm(R,T)^\times$ is a special group).
\end{teo}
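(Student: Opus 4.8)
The plan is to exhibit a special group $G$ for which $Sm(R,T) = M(G)$ in the sense of Proposition \ref{sg.to.mf} and Definition \ref{smf}; the natural candidate is $G := (R/_mT)^\times$ equipped with the relation induced by the multivalued sum. Write $F := Sm(R,T) = R/_mT$. By Proposition \ref{defn:strangeloc}, $F$ is a multiring; axiom \textbf{DM1} says every nonzero class is its own inverse ($\bar a\,\bar a = \bar 1$), so every nonzero element of $F$ is invertible and $F$ is a multifield, hence (Definition \ref{defn:multiring}) a hyperfield. Axiom \textbf{DM0} gives $\bar 1 - \bar 1 = F$, so $F$ is hyperbolic, and $G = F^\times$ is an abelian group of exponent $2$ with distinguished element $-1 = -\bar 1$. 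On $G^2$ I define $\langle a,b\rangle \equiv \langle c,d\rangle$ iff $ab = cd$ and $c \in a+b$ (in $F$). It then remains to verify that $(G,-1,\equiv)$ satisfies \textbf{SG0}--\textbf{SG6}; once this is done, a direct comparison of the multivalued sums shows $F = M(G)$ (for $a \neq -b$ reversibility forces $0 \notin a+b$, so $a+b = D_G(a,b) \subseteq G$, while for $a=-b$ both sums equal all of $F$ by hyperbolicity), which proves that $F$ is a special hyperfield and $F^\times = G$ a special group.

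The first group of axioms I expect to be routine consequences of the hyperfield structure together with \textbf{DM0}--\textbf{DM2}. The key structural remark is that hyperbolicity and the reversibility axiom (Definition \ref{defn:multigroupI}(i)) give $\{a,b\} \subseteq a+b$ for all nonzero $a,b$: since $b \in (-a)+a = a-a = F$, reversibility yields $a \in a+b$, and symmetrically $b \in a+b$. From this, reflexivity of $\equiv$ and \textbf{SG1} are immediate, and \textbf{SG3} holds by definition. Axiom \textbf{SG2} follows since $a(-a) = -a^2 = -1 = 1\cdot(-1)$ and $1 \in a - a = F$; \textbf{SG5} follows from the distributive inclusion (Definition \ref{defn:multiring}(iv)), $c \in a+b \Rightarrow gc \in ga+gb$, together with $g^2 = 1$; and \textbf{SG4} follows from reversibility (plus the compatibility of $-$ with the sum) combined with the exponent-$2$ identity $ab = cd \Rightarrow ac = bd$. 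For the symmetry and transitivity clauses of \textbf{SG0}, I rewrite $a+b = a\,(1+ab)$, so that $c \in a+b$ with $cd = ab$ forces the cofactors to lie in $D_F(1,ab) = 1-(-ab)$; transitivity of $\equiv$ on pairs is then exactly the multiplicative closure $\bigl(1-(-ab)\bigr)\bigl(1-(-ab)\bigr) \subseteq \bigl(1-(-ab)\bigr)$ supplied by \textbf{DM2}. This shows $(G,-1,\equiv)$ is a pre-special group.

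The main obstacle is \textbf{SG6}, the $3$-transitivity of the extended relation, and this is precisely where \textbf{DM3} is designed to act. Unwinding the inductive definition of $\equiv_3$, to prove that the composite of $\langle a_1,a_2,a_3\rangle \equiv \langle b_1,b_2,b_3\rangle$ with a further such relation is again of this form, I must, given witnessing systems $\langle a_1,x\rangle\equiv\langle b_1,y\rangle$, $\langle a_2,a_3\rangle\equiv_2\langle x,z\rangle$, $\langle b_2,b_3\rangle\equiv_2\langle y,z\rangle$, produce new witnesses for the composite. Axiom \textbf{DM3} is exactly the single-step ``associativity/cocycle'' statement that from $a \in x+b$ and $b \in y+z$ yields some $v \in x+z$ with $a \in y+v$ and $vb \in xy+az$; iterating and splicing these rewrites, together with the pairwise facts of the previous paragraph, realizes the required witnesses. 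I would carry this out by reducing $3$-transitivity to the associativity of the induced Witt-sum operation and then applying \textbf{DM3} repeatedly, the bookkeeping of the auxiliary elements being the delicate point.

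Finally, assembling \textbf{SG0}--\textbf{SG6} shows that $G = (R/_mT)^\times$ is a special group, and the sum comparison of the first paragraph identifies $F$ with $M(G)$; hence $Sm(R,T)$ is a special hyperfield, and $Sm(R,T)^\times$ is a special group, as claimed.
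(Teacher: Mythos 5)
The paper never actually proves this theorem: it is quoted from Theorem 3.4 of \cite{roberto2021quadratic}, and the nontrivial content is exactly the companion citation that follows it in the text (Theorem 3.9 of the same reference: under DM0--DM2, axiom DM3 is equivalent to SMF4, i.e.\ DM-hyperfields coincide with special hyperfields). Your framework is the right one and lines up with that route: DM1 makes $F=R/_mT$ a hyperfield whose nonzero elements square to $1$, DM0 makes it hyperbolic, the relation $\langle a,b\rangle\equiv\langle c,d\rangle \Leftrightarrow (ab=cd \text{ and } c\in a+b)$ is the only possible candidate, and your verifications of SG0--SG5 are correct; in particular the rewriting $a+b=a(1+ab)$, which makes symmetry automatic and turns transitivity of $\equiv$ on pairs into precisely the closure $(1+ab)(1+ab)\subseteq 1+ab$ supplied by DM2, and the observations $\{a,b\}\subseteq a+b$ and $0\notin a+b$ for $b\ne -a$, which give $F=M(F^\times)$ at the end. (The passage from DM3 stated on $(R,T)$ to a statement about $F$ is also harmless, since every representative of a nonzero class of $R/_mT$ is nonzero in $R$.)

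The genuine gap is SG6. What you actually prove (SG0--SG5) shows only that $F^\times$ is a \emph{pre}-special group, i.e.\ that $F$ is a pre-special hyperfield --- a strictly weaker conclusion, and one the paper is careful to distinguish (it is the defining condition of the category $PSMF$). Your treatment of the step from DM3 to $3$-transitivity is a plan, not an argument: ``iterating and splicing these rewrites \dots realizes the required witnesses,'' with the bookkeeping conceded to be ``the delicate point.'' But that bookkeeping \emph{is} the theorem: transitivity of $\equiv_3$ requires merging two witness systems (six auxiliary elements constrained by six binary isometries), whereas DM3 supplies only a single exchange step (from $a\in x+b$ and $b\in y+z$ one gets some $v\in x+z$ with $a\in y+v$ and $vb\in xy+az$), and deriving the former from repeated applications of the latter is precisely the substance of the cited Theorem 3.9. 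Note also that ``reducing $3$-transitivity to the associativity of the induced Witt-sum'' cannot carry the load: $R/_mT$ is already a multiring by Proposition \ref{defn:strangeloc}, so its sum is associative for free, and if associativity together with DM0--DM2 implied SG6, then DM3 would be redundant for hyperfields and the equivalence DM3 $\Leftrightarrow$ SMF4 would have no content. So the proof is incomplete exactly at the point where all the remaining work sits.
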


\begin{teo}[Theorem 3.9 of \cite{roberto2021quadratic}]
 Let $F$ be a hyperfield satisfying DM0-DM2. Then $F$ satisfies DM3 if and only if satisfies SMF4. In other words, $F$ is a DM-hyperfield if and only if it is a special hyperfield.
\end{teo}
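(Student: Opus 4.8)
The plan is to translate both axioms into the common language of the multivalued sum of $F$ and then prove each implication separately, observing first that DM1 trivializes every purely multiplicative requirement. Indeed, since every nonzero element satisfies $a^2=1$, from $ab=cd$ one gets $abcd=1$, hence $ac=bd$ and $d=abc$; consequently the discriminant part $a(-c)=(-b)d$ of the conclusion of SMF4 holds automatically, and the product membership $vb\in xy+az$ in DM3 is the only genuinely additive (i.e. $D$-theoretic) assertion left to control. So both DM3 and SMF4 reduce to statements about representation, i.e. about membership in sums $p+q$. Before starting I would record the degenerate reductions: whenever one of the relevant partial sums is forced to be all of $F$ — which by DM0 happens exactly when a summand equals the negative of the other — every inclusion to be proved becomes trivial; thus I may assume throughout that all elements are nonzero and that no such collapse occurs.

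For the direction SMF4 $\Rightarrow$ DM3, I would start from $a\in x+b$ and $b\in y+z$. The two membership conditions required of $v$ are cheap: by commutativity and associativity of the multigroup operation, $a\in x+b\subseteq x+(y+z)=y+(x+z)$, so there is $v\in x+z$ with $a\in y+v$, using only the multigroup axioms. The real work is to produce such a $v$ that in addition satisfies $vb\in xy+az$. Here I would encode the data as binary isometries: $a\in x+b$ gives $\langle x,b\rangle\equiv\langle a,xba\rangle$ and $b\in y+z$ gives $\langle y,z\rangle\equiv\langle b,yzb\rangle$, and then run SMF4 (the analogue of SG4) — possibly applied twice as a chain — together with distributivity $e(f+g)=ef+eg$ and DM2 to force a common value linking the three sets $x+z$, $a-y$ (note $a\in y+v$ rewrites as $v\in a-y$ by reversal) and $bxy+abz$ (note $vb\in xy+az$ rewrites, after multiplying by $b$ and using $b^2=1$, as $v\in bxy+abz$). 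Exhibiting a single $v$ in this triple intersection is exactly the linkage property that SG4 supplies in classical special-group theory, and transporting it to the hyperfield is the heart of the argument.

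For the converse DM3 $\Rightarrow$ SMF4, I would read SMF4 off as a special instance of DM3. Given $\langle a,b\rangle\equiv\langle c,d\rangle$ (so $c\in a+b$ and $d=abc$), I would feed an appropriate substitution of the variables $(a,b,x,y,z)$ of DM3 so that its output $v$, together with the reversal property of the multigroup and DM1 for the multiplicative part, yields precisely $-b\in a+(-c)$ and the discriminant identity, i.e. $\langle a,-c\rangle\equiv\langle -b,d\rangle$. Alternatively, and more conceptually, one may invoke Theorem \ref{teopmf}: for a hyperfield $F$ with $(F,\{1\})$ a DM-multiring one has $Sm(F,\{1\})=F$, so $F$ is a special hyperfield and therefore satisfies all the special-multifield axioms, SMF4 among them; this settles the implication at once, provided \ref{teopmf} is available independently.

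The main obstacle I anticipate is the product/linkage step in SMF4 $\Rightarrow$ DM3: the two membership conditions on $v$ are automatic, but pinning down one value $v$ that simultaneously realizes $vb\in xy+az$ is where the quadratic content lives, and it is precisely the role played by SG4. The accompanying nuisance is the case analysis for the hyperbolic degeneracies — summands collapsing to all of $F$, or coincidences such as $c=-e$ in an intermediate sum — which must be dispatched with DM0 and DM2 in order to keep the generic argument clean.
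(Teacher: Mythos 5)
First, a point of comparison: this paper contains no proof of the statement --- it is imported wholesale as Theorem 3.9 of \cite{roberto2021quadratic}, and the axiom SMF4 is never even written out here --- so your proposal can only be measured against the mathematics itself, and there it has a genuine gap. The gap is your premise that SMF4 is ``the analogue of SG4'' and that SG4-type linkage is what produces the clause $vb\in xy+az$. In a hyperfield of exponent $2$ the SG4-analogue is \emph{automatic}, hence carries no strength: if $ab=cd$ and $c\in a+b$, then the reversal axiom for multigroups (Definition \ref{defn:multigroupI}(i)) gives $b\in(-a)+c$, so $-b\in a-c$ (negation distributes over the multivalued sum), while the discriminant identity $a(-c)=(-b)d$ follows from $a^2=b^2=c^2=1$ exactly as in your own reduction; thus $\langle a,-c\rangle\equiv\langle -b,d\rangle$ holds in \emph{every} hyperfield satisfying DM1, with no further axiom. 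Consequently, if SMF4 meant what you take it to mean, the theorem would assert that DM3 follows from DM0--DM2 alone, i.e.\ that every pre-special hyperfield is special --- obliterating precisely the distinction this paper introduces the category $PSMF$ to capture. So SMF4 must be a strictly stronger axiom (the counterpart of 3-transitivity/SG6 rather than of SG4), and neither direction of your argument engages with its actual content; indeed, the fact that your substitution route for the converse ends up deriving a statement that needs no hypothesis at all is a symptom of the same misidentification.

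Second, even inside your own framing the proof is not carried out. Your reductions are correct as far as they go: the degenerate cases, the discriminant bookkeeping via DM1, and especially the observation that the multigroup axioms alone yield some $v\in x+z$ with $a\in y+v$ (from $a\in x+b\subseteq x+(y+z)=y+(x+z)$), so that the entire content of DM3 is the extra clause $vb\in xy+az$. But for that clause you offer only ``run SMF4, possibly applied twice as a chain, \dots to force a common value,'' and you then name this very step as the main anticipated obstacle; a proposal that defers its central step is a plan, not a proof. The one complete piece is your alternative route for DM3 $\Rightarrow$ SMF4: invoking Theorem \ref{teopmf} with $T=\{1\}$, so that $F\cong F/_m\{1\}=Sm(F,\{1\})$ is a special hyperfield, is legitimate (that theorem is stated prior to, and independently of, the present statement in \cite{roberto2021quadratic}), but it settles only that half, and by citation rather than by argument.
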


In this sense, we define the following category:

\begin{defn}
A \textbf{pre-special hyperfield} is a hyperfield satisfying DM0, DM1 and DM2. In other words, a pre-special hyperfield is a hyperbolic hyperfield $F$ such that for all $a\in\dot F$, $a^2=1$ and $(1-a)(1-a)\subseteq1-a$.

The category of pre-special hyperfields will be denoted by $PSMF$.
\end{defn}

\begin{ex}[An hyperfield satisfying DM3 but not DM2]\label{expresgn}
Let $H_3$ as in Example \ref{H-multi}. We have that $H_3$ is hyperbolic and has exponent 2 but does not satisfies $DM2$:
$$1+2=\{1,2\}\mbox{ and }(1+2)(1+2)=1+2+2+1=(1+1)+(2+2)=H_3.$$
Moreover, by a case analysis we have that $H_3$ satisfies DM3, and we prove it by a case analysis: we have 27 possible choices for the triple $(x,y,z)$ with $x,y,z\in H_3$.

\begin{enumerate}[i -]
    \item Let $(x,y,z)$ such that $z=0$. Then $b\in y+z$ means $b=y$ and $v\in x+z$ means $v=x$. Let $a\in x+b$ with $b\in y+z$. Then $a\in b+x=y+v$ and
    $$vb=xy\in xy+az.$$
    So we cover the cases
    $$(x,y,z)\in\{(0,0,0),(0,1,0),(0,2,0),(1,0,0),(1,1,0),(1,2,0),(2,0,0),(2,1,0),(2,2,0)\}.$$

    \item Let $(x,y,z)$ such that $x=0$. Then $v\in x+z$ means $v=z$. Let $a\in x+b=0+b=\{b\}$ with $b\in y+z$. Then $a\in b+z=y+z$, $az=bz$ and
    $$vb=bz\in xy+bz=xy+az.$$
    So we cover the cases
    $$(x,y,z)\in\{(0,0,1),(0,0,2),(0,1,1),(0,1,2),(0,2,1),(0,2,2)\}.$$

    \item Let $(x,y,z)$ such that $y=0$ and $x,z\ne0$. Then $\{1,2\}\subseteq x+z$, $b\in y+z$ means $b=z$ and $a\in y+v$ means $a=v$. Let $a\in x+b=x+z$. Then $vb=az$ and
    $$vb=az\in0+az=xy+az.$$
    So we cover the cases
    $$(x,y,z)\in\{(1,0,1),(1,0,2),(2,0,1),(2,0,2)\}.$$

    \item Let $(x,y,z)$ such that $x,y,z\ne0$ and $x+z=H_3$. Let $a\in x+b$ with $b\in y+z$. If $a=0$ then $b=x$. Taking $v=y\in H_3=x+z$ we get $a\in y+y=y+v$ and
    $$vb=xy\in xy+0=xy+az.$$
    If $b=0$ then $z=y$ $a=x$ (so $az=xy$). Taking $v=y\in H_3=x+z$ we get $a\in y+y=y+v$ and
    $$vb=0\in xy+xy=xy+az.$$
    If $a,b\ne0$, then there exist $v\in H_3=x+z$ such that $vb=xy$. Then $a\in\{1,2\}\subseteq y+v$ (because $x,y,z,v\ne0$) and
    $$vb=xy\in xy+az.$$
    So we cover the cases
    $$(x,y,z)\in\{(1,1,1),(1,2,1),(2,1,2),(2,2,2)\}.$$

    \item Let $(x,y,z)$ such that $x,y,z\ne0$ and $y+z=H_3$. Let $a\in x+b$ with $b\in y+z$. If $a=0$ then $b=x$. Taking $v=y\in\{1,2\}\subseteq x+z$ we get $a\in y+y=y+v$ and
    $$vb=xy\in xy+0=xy+az.$$
    If $b=0$ then $z=y$ $a=x$ (so $az=xy$). Taking $v=y\in \{1,2\}\subseteq x+z$ we get $a\in y+y=y+v$ and
    $$vb=0\in xy+xy=xy+az.$$
    If $a,b\ne0$, then there exist $v\in\{1,2\}\subseteq x+z$ such that $vb=xy$. Then $a\in\{1,2\}\subseteq y+v$ (because $x,y,z,v\ne0$) and
    $$vb=xy\in xy+az.$$
    So we cover the cases
    $$(x,y,z)\in\{(1,1,2),(1,2,2),(2,1,1),(2,2,1)\}.$$
\end{enumerate}
\end{ex}

Example \ref{expresgn} is a big surprise in the sense that, in the context of special groups, this example exhibits a group that it is not pre-special but satisfies SG6.

  \section{The K-theory for Multifields/Hyperfields}

In this section we introduce the notion of K-theory of a hyperfield essentially repeating the construction in \ref{milkt} replacing the word ``field'' by ``hyperfield'' and explore some of this basic properties. In particular, Theorem \ref{fixsg3}  is an extension of a result \cite{wadsworth55merkurjev}, that gives us some evidence, that apart from the obvious resemblance, more technical aspects of this new theory can be developed (but with other proofs) in multistructure setting in parallel with classical K-theory.

    \begin{defn}[The K-theory of a Hyperfield]
  For a hyperfield $F$, $K_*F$ is the graded ring
$$K_*F=(K_0F,K_1F,K_2F,...)$$
  defined by the following rules: $K_0F:=\mathbb Z$. $K_1F$ is the multiplicative group $\dot F$ written additively.
  With this purpose, we fix the canonical ``logarithm'' isomorphism
$$\rho:\dot F\rightarrow K_1F,$$
  where $\rho(ab)=\rho(a)+\rho(b)$. Then $K_nF$ is defined to be the quotient of the tensor algebra
  $$K_1F\otimes K_1F\otimes...\otimes K_1F\,(n \mbox{ times})$$
  by the (homogeneous) ideal generated by all $\rho(a)\otimes \rho(b)$, with $a\ne0,1$ and $b\in1-a$.
  \end{defn}

  In other words, for each $n\ge2$,
$$K_nF:=T^n(K_1F)/Q^n(K_1(F)),$$
where
$$T^n(K_1F):=K_1F\otimes_{\mathbb Z} K_1F\otimes_{\mathbb Z}...\otimes_{\mathbb Z} K_1F$$
and $Q^n(K_1(F))$ is the subgroup generated by all expressions of type $\rho(a_1)\otimes\rho(a_2)\otimes...\otimes\rho(a_n)$ such that $a_i\in1-a_j$ for some $i,j$ with $1\le i,j\le n$.

  To avoid carrying the overline symbol, we will adopt all the conventions used in Dickmann-Miraglia's K-theory (as explained in above definition
\ref{defn:ksg}). Just as it happens with the previous K-theories, a generic element $\eta\in K_nF$ has the pattern
  $$\eta=\rho(a_1)\otimes\rho(a_2)\otimes...\otimes\rho(a_n)$$
  for some $a_1,...,a_n\in\dot F$, with $a_i\in1-a_j$ for some $1\le i<j\le n$. Note that if $F$ is a field, then ``$b\in1-a$'' just means
$b=1-a$, and the hyperfield and Milnor's K-theory for $F$ coincide.

  The very first task, is to extend the basic properties valid in Milnor's and Dickmann-Miraglia's K-theory to ours. Here we already need to
restrict our attention to hyperbolic hyperfields:

  \begin{lem}[Basic Properties I]\label{bp1}
 Let $F$ be an hyperbolic hyperfield. Then
 \begin{enumerate}[a -]
 \item $\rho(1)=0$.
 \item For all $a\in\dot F$, $\rho(a)\rho(-a)=0$ in $K_2F$.
 \item For all $a,b\in\dot F$, $\rho(a)\rho(b)=-\rho(a)\rho(b)$ in $K_2F$.
 \item  For every $a_1,...,a_n\in\dot F$ and every permutation $\sigma\in S_n$,
 $$\rho(a_1)...\rho(a_i)...\rho(a_n)=\mbox{sgn}(\sigma)\rho(a_1)...\rho(a_n)\mbox{ in }K_nF.$$
  \item For every $\xi\in K_mF$ and $\eta\in K_nF$, $\eta\xi=(-1)^{mn}\xi\eta$ in $K_{m+n}F$.
 \item For all $a\in\dot F$, $\rho(a)^2=\rho(a)\rho(-1)$.
 \end{enumerate}
\end{lem}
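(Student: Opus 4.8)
The plan is to treat part (a) as a one-line consequence of the homomorphism property of $\rho$, to invest essentially all the effort into part (b) (the hyperfield incarnation of Milnor's relation $\{a,-a\}=0$), and then to obtain parts (f), (c), (d), (e) as formal corollaries. Two structural facts will be used throughout: the \emph{Steinberg-type relation}, that $\rho(a)\rho(b)=0$ in $K_2F$ whenever $a\in\dot F\setminus\{1\}$ and $b\in 1-a$ (this is exactly the defining relation of $K_2F$), and the fact that $\rho(-1)$ is $2$-torsion, since $(-1)^2=1$ gives $2\rho(-1)=\rho\big((-1)^2\big)=\rho(1)=0$. Part (a) is then immediate: $\rho(1)=\rho(1\cdot 1)=\rho(1)+\rho(1)$, so $\rho(1)=0$.

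The heart of the argument is part (b), where I would transcribe the classical identity $-a=(1-a)/(1-a^{-1})$ into the multivalued setting. Fix $a\in\dot F$; the case $a=1$ follows from (a) because $\rho(1)\rho(-1)=0$, so assume $a\neq 1$. Then $a^{-1}\neq 0,1$, and since $0\in 1-a$ would force $a=1$, every element of $1-a$ is nonzero. First I would choose \emph{any} $b\in 1-a$, so that $\rho(a)\rho(b)=0$ by the Steinberg relation. The key algebraic step is exact distributivity of scaling by the nonzero element $-a^{-1}$, which gives the set equality $-a^{-1}(1-a)=-a^{-1}+1=1-a^{-1}$; consequently $c:=-a^{-1}b$ lies in $1-a^{-1}$ and is nonzero, whence $\rho(a^{-1})\rho(c)=0$, i.e.\ $\rho(a)\rho(c)=0$ after using $\rho(a^{-1})=-\rho(a)$. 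Finally, as genuine elements of $\dot F$ one has $-a=bc^{-1}$ (directly from $c=-a^{-1}b$), so $\rho(-a)=\rho(b)-\rho(c)$ by additivity of $\rho$, and therefore $\rho(a)\rho(-a)=\rho(a)\rho(b)-\rho(a)\rho(c)=0$.

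Parts (f) and (c) are then purely formal. Expanding $0=\rho(a)\rho(-a)=\rho(a)\big(\rho(-1)+\rho(a)\big)$ gives $\rho(a)^2=-\rho(a)\rho(-1)$; since $2\rho(-1)=0$, biadditivity of the product yields $2\,\rho(a)\rho(-1)=\rho(a)\big(2\rho(-1)\big)=0$, so $-\rho(a)\rho(-1)=\rho(a)\rho(-1)$ and hence $\rho(a)^2=\rho(a)\rho(-1)$, which is (f). For the skew-symmetry in (c), namely $\rho(a)\rho(b)=-\rho(b)\rho(a)$, I would apply (b) to the product $ab\in\dot F$ and expand
$$0=\rho(ab)\rho(-ab)=\big(\rho(a)+\rho(b)\big)\big(\rho(-1)+\rho(a)+\rho(b)\big).$$
Substituting $\rho(a)^2=-\rho(a)\rho(-1)$ and $\rho(b)^2=-\rho(b)\rho(-1)$ makes every term containing $\rho(-1)$ cancel, leaving $\rho(a)\rho(b)+\rho(b)\rho(a)=0$.

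Part (d) follows by induction on the length of a reduced word for $\sigma$: the group $S_n$ is generated by adjacent transpositions, each of which swaps two neighbouring degree-one factors and so contributes a factor $-1$ by (c), and accumulating these gives the coefficient $\mathrm{sgn}(\sigma)$. For (e), biadditivity of the graded multiplication reduces the claim to pure products $\eta=\rho(a_1)\cdots\rho(a_m)$ and $\xi=\rho(b_1)\cdots\rho(b_n)$; transporting each of the $n$ factors of $\xi$ leftward past the $m$ factors of $\eta$ costs $(-1)^m$ per factor by (d), for a total sign $(-1)^{mn}$, which is exactly $\eta\xi=(-1)^{mn}\xi\eta$. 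The one genuinely delicate point, and the step I expect to be the main obstacle, is (b): unlike in a field, $1-a$ and $1-a^{-1}$ are \emph{sets}, so one must verify that a single choice of $b\in 1-a$ can be used simultaneously to witness $\rho(a)\rho(b)=0$ and, via the exact scaling identity $-a^{-1}(1-a)=1-a^{-1}$, to produce the companion element $c=-a^{-1}b\in 1-a^{-1}$ with $-a=bc^{-1}$; everything downstream is the standard Milnor bookkeeping.
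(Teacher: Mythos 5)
Your proposal is correct, but its core --- part (b) --- follows a genuinely different route from the paper's. The paper gets $\rho(a)\rho(-a)=0$ directly from hyperbolicity: since $1-1=F$, one has $a^{-1}\in 1-1$, whence $-1\in -1+a^{-1}$ by the reversibility axiom of multigroups, and multiplying by $a$ gives $-a\in 1-a$, so that $\rho(a)\otimes\rho(-a)$ is itself a defining (Steinberg) generator of the ideal. (As printed the paper concludes ``$a\in 1-a$'', which is a sign slip --- that membership fails, e.g., in $Q_2$ for $a=-1$ --- but the intended conclusion is $-a\in 1-a$.) You instead transcribe Milnor's classical field argument: pick any $b\in 1-a$, use exactness of scaling by the unit $-a^{-1}$ to produce $c=-a^{-1}b\in 1-a^{-1}$, and combine the two Steinberg relations via $-a=bc^{-1}$; your verifications that $0\notin 1-a$ when $a\neq 1$ and that unit-scaling distributes as a set equality are exactly the points that need checking in the multivalued setting, and they are right. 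Your route is longer but strictly more general: it never invokes hyperbolicity, so it establishes (b) --- and hence, through your formal derivations, (c)--(f) as well --- for an \emph{arbitrary} hyperfield, whereas the paper's one-line argument is tied to the hypothesis $1-1=F$. The remaining parts proceed essentially identically in both proofs: (c) by expanding $\rho(ab)\rho(-ab)=0$ (you reorganize via $\rho(-1)$ and prove (f) first, while the paper factors $-ab$ as $(-a)b$ and $(-b)a$; both yield $\rho(a)\rho(b)+\rho(b)\rho(a)=0$), and (d), (e) by the standard bookkeeping with adjacent transpositions and biadditivity. Note finally that you read (c) and (d) in their intended skew-symmetric form, $\rho(a)\rho(b)=-\rho(b)\rho(a)$ and $\rho(a_{\sigma(1)})\cdots\rho(a_{\sigma(n)})=\mathrm{sgn}(\sigma)\rho(a_1)\cdots\rho(a_n)$, which is what the paper's own argument actually establishes despite the typographical form of the statement.
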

\begin{proof}
 $ $
 \begin{enumerate}[a -]
  \item Is an immediate consequence of the fact that $\rho$ is an isomorphism.
  \item Since $F$ hiperbolic, $1-1=F$. Then $-a^{-1}\in1-1$ for all $a\in\dot F$, and hence, $-1\in-1+a^{-1}$. Multiplying this by
$-a$, we get $a\in1-a$. By definition, this imply $\rho(a)\rho(-a)=0$.

  \item By item (b), $\rho(ab)\rho(-ab)=0$ in $K_2F$. But
  \begin{align*}
   \rho(ab)\rho(-ab)&=\rho(a)\rho((-a)b)+\rho(b)\rho((-b)a) \\
   &=\rho(a)\rho(-a)+\rho(a)\rho(b)+\rho(b)\rho(-b)+\rho(b)\rho(a) \\
   &=\rho(a)\rho(b)+\rho(b)\rho(a).
  \end{align*}
  From $\rho(a)\rho(b)+\rho(b)\rho(a)=\rho(ab)\rho(-ab)=0$, we get the desired result $\rho(a)\rho(b)=-\rho(a)\rho(b)$ in $K_2F$.

  \item This is a consequence of item (c) and an inductive argument.

  \item This is a consequence of item (d) and an inductive argument, using the fact that an element in $K_nF$ has pattern
  $$\eta=\rho(a_1)\otimes \rho(a_2)\otimes...\otimes \rho(a_n)$$
  for some $a_1,...,a_n\in\dot F$, with $a_i\in1-a_j$ for some $1\le i<j\le n$.

 \item Follow from the fact that $F$ is hyperbolic i.e, for all $a\in\dot F$, $a\in1-1$.
 \end{enumerate}
\end{proof}

An element $a\in\dot F$ induces a morphism of graded rings $\omega^a=\{\omega^a_n\}_{n\ge1}:K_*F\rightarrow K_*F$ of degree 1, where $\omega^a_n:K_nF\rightarrow K_{n+1}F$ is the multiplication by $\lambda(-a)$. When $a=-1$, we write
$$\omega=\{\omega_n\}_{n\ge1}=\{\omega^{-1}_n\}_{n\ge1}=\omega^{-1}.$$

 \begin{prop}[Adapted from 3.3 of \cite{dickmann2006algebraic}]\label{3.3ktmultiadap}
 Let $F,K$ be hyperbolic hyperfields and $\varphi:F\rightarrow L$ be a morphism. Then $\varphi$ induces a morphism of graded rings
 $$\varphi_*=\{\varphi_n:n\ge0\}:K_*F\rightarrow K_*L,$$
 where $\varphi_0=Id_{\mathbb Z}$ and for all $n\ge1$, $\varphi_n$ is given by the following rule on generators
$$\varphi_n(\rho(a_1)...\rho(a_n))=\rho(\varphi(a_1))...\rho(\varphi(a_n)).$$
Moreover if $\varphi$ is surjective then $\varphi_*$ is also surjective, and if $\psi:L\rightarrow M$ is another morphism then
\begin{enumerate}[a -]
 \item $(\psi\circ\varphi)_*=\psi_*\circ\varphi_*$ and $Id_*=Id$.
 \item For all $a\in \dot F$ the following diagram commute:
 $$\xymatrix@!=5pc{K_nF\ar[d]_{\varphi_n}\ar[r]^{\omega^a_n} & K_{n+1}F\ar[d]^{\varphi_{n+1}} \\ K_nL\ar[r]_{\omega^{\varphi(a)}_n} & K_{n+1}L}$$
 \item If $\varphi(1)=1$ then for all $n\ge1$ the following diagram commute:
 $$\xymatrix@!=5pc{K_nF\ar[d]_{\varphi_n}\ar[r]^{\omega^{-1}_n} & K_{n+1}F\ar[d]^{\varphi_{n+1}} \\ K_nL\ar[r]_{\omega^{-1}_n} & K_{n+1}L}$$
\end{enumerate}
\end{prop}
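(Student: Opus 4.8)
The plan is to construct $\varphi_*$ in three stages — degree one, tensor algebra, quotient — and then harvest clauses (a), the surjectivity assertion, and the diagrams (b), (c) as formal consequences. First I would observe that any morphism sends $\dot F$ into $\dot L$: from $\varphi(a)\varphi(a^{-1})=\varphi(1)=1$ (axioms (iv), (v) of Definition \ref{defn:morphism}) each $\varphi(a)$ is invertible, hence nonzero, so $\dot\varphi:=\varphi|_{\dot F}\colon\dot F\to\dot L$ is a group homomorphism. Since $\rho_F,\rho_L$ are isomorphisms, $\varphi_1:=\rho_L\circ\dot\varphi\circ\rho_F^{-1}\colon K_1F\to K_1L$ is automatically a well-defined group homomorphism with $\varphi_1(\rho(a))=\rho(\varphi(a))$, and I set $\varphi_0:=Id_{\mathbb Z}$. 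By the universal property of $\otimes_{\mathbb Z}$, $\varphi_1$ induces for each $n$ a homomorphism $T^n(\varphi_1)\colon T^n(K_1F)\to T^n(K_1L)$ sending $\rho(a_1)\otimes\cdots\otimes\rho(a_n)$ to $\rho(\varphi(a_1))\otimes\cdots\otimes\rho(\varphi(a_n))$, and these assemble into a graded ring homomorphism on the tensor algebras.

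The crux is to show that this map carries the defining ideal $Q^*(K_1F)$ into $Q^*(K_1L)$, for then it descends to a graded ring homomorphism $\varphi_*\colon K_*F\to K_*L$ on the quotients. As $T(\varphi_1)$ is multiplicative and $Q^*$ is generated by its degree-two part, it suffices to check the generators $\rho(a)\otimes\rho(b)$ with $a\ne 0,1$ and $b\in 1-a$. Here I would invoke the morphism axioms: from $b\in 1+(-a)$, axiom (i) gives $\varphi(b)\in\varphi(1)+\varphi(-a)$, which by (v) and (ii) reads $\varphi(b)\in 1-\varphi(a)$. If $\varphi(a)\ne 1$ this exhibits $\rho(\varphi(a))\otimes\rho(\varphi(b))$ as a generator of $Q^2(K_1L)$; if $\varphi(a)=1$, then $\rho(\varphi(a))=\rho(1)=0$ by Lemma \ref{bp1}(a), so the image is $0\in Q^2(K_1L)$. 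Either way the image lands in the ideal. This is the one genuinely content-bearing step, and it is precisely where the compatibility of multiring morphisms with the multivalued sum is used; the main (mild) obstacle is exactly this translation of ``$b\in 1-a$'' into ``$\varphi(b)\in 1-\varphi(a)$'' together with the degenerate case $\varphi(a)=1$ and the routine descent-to-quotient argument.

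With $\varphi_*$ in hand the remaining clauses are bookkeeping on the generators $\rho(a_1)\cdots\rho(a_n)$, which span each $K_n$. For (a), both $(\psi\circ\varphi)_*$ and $\psi_*\circ\varphi_*$ send such a generator to $\rho(\psi\varphi(a_1))\cdots\rho(\psi\varphi(a_n))$, and $Id_*=Id$ is immediate. For surjectivity, if $\varphi$ is onto then $\dot\varphi$ is onto (a nonzero value has a nonzero preimage, since $\varphi(0)=0$), so every generator $\rho(c_1)\cdots\rho(c_n)$ of $K_nL$ equals $\varphi_n(\rho(a_1)\cdots\rho(a_n))$ with $\varphi(a_i)=c_i$; as these span $K_nL$, $\varphi_n$ is onto. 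For the squares (b) and (c) I would use that $\varphi_*$ is a graded ring homomorphism together with $\varphi_1(\rho(c))=\rho(\varphi(c))$: on a generator $\eta=\rho(a_1)\cdots\rho(a_n)$ one computes $\varphi_{n+1}(\rho(-a)\,\eta)=\rho(\varphi(-a))\,\varphi_n(\eta)=\rho(-\varphi(a))\,\varphi_n(\eta)=\omega^{\varphi(a)}_n(\varphi_n(\eta))$, where $\varphi(-a)=-\varphi(a)$ by axiom (ii); this yields (b). Finally, (c) is the instance of this computation attached to the distinguished multiplier $\omega=\omega^{-1}$: the hypothesis $\varphi(1)=1$ forces $\varphi(-1)=-\varphi(1)=-1$, so $\varphi$ fixes the element defining $\omega$ and the top and bottom arrows of the square use the same multiplier, making it commute.
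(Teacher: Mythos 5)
Your proposal is correct and follows essentially the same route as the paper's proof: build $\varphi_1$ from the group homomorphism $\dot F\to\dot L$, extend by multilinearity and the universal property of the tensor product, descend to the quotient by checking that generators of $Q^*(K_1F)$ map into $Q^*(K_1L)$ (via $b\in 1-a\Rightarrow\varphi(b)\in 1-\varphi(a)$), and then verify (a), (b), (c) and surjectivity on generators. If anything, your write-up is slightly more careful than the paper's: you treat the degenerate case $\varphi(a)=1$ explicitly and correctly assert only the containment $Q^n(K_1F)\subseteq\ker$ of the induced map, where the paper loosely claims $\mathrm{Ker}(\tilde\varphi_n)=Q^n(K_1F)$.
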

\begin{proof}
Firstly, note that $\varphi$ extends to a function $\varphi_1:K_1F\rightarrow K_1L$ given by the rule
$$\varphi_1(\rho(a))=\rho(\varphi(a)).$$

Certainly $\varphi_1$ is a morphism because
$$\varphi_1(0)=\varphi_1(\rho(1))=\rho(\varphi(1))=\rho(1)=0,$$
and for all $\rho(a),\rho(b)\in K_1F$,
$$\varphi_1(\rho(a)+\rho(b))=\varphi_1(\rho(ab))=\rho(\varphi(ab))=\rho(\varphi(a)\varphi(b))=\rho(\varphi(a))+\rho(\varphi(b)).$$
Proceeding inductively, for all $n\ge1$ we extend $\varphi$ to a function
$\varphi_n:\prod^n_{i=1}K_1F\rightarrow K_nL$ given by the rule
$$\varphi(\rho(a_1),...,\rho(a_n)):=\varphi_1(\rho(a_1))...\varphi_1(\rho(a_n))=\rho(\varphi(a_1))...\rho(\varphi(a_n)).$$
Then if $i=1,...,n$ and $b_i\in k_1F$ we have
\begin{align*}
 &\varphi_n(\rho(a_1),...,\rho(a_i)+\rho(b_i),...,\rho(a_n))=\varphi_n(\rho(a_1),...,\rho(a_ib_i),...,\rho(a_n))= \\
 &\rho(\varphi(a_1))...\rho(\varphi(a_ib_i)...\rho(\varphi(a_n))=\rho(\varphi(a_1))...\rho(\varphi(a_i)\varphi(b_i))...\rho(\varphi(a_n))= \\
 &\rho(\varphi(a_1)...[\rho(\varphi(a_i)+\varphi(b_i))]...\rho(\varphi(a_n))= \\
 &\rho(\varphi(a_1))...\rho(\varphi(a_i))...\rho(\varphi(a_n))+\rho(\varphi(a_1))...\rho(\varphi(b_i))...\rho(\varphi(a_n))= \\
 &\varphi_n(\rho(a_1),...,\rho(a_i),...,\rho(a_n))
 +\varphi_n(\rho(a_1),...,\rho(b_i),...,\rho(a_n)),
\end{align*}
then for each $n$, $\varphi_n:\prod^n_{i=1}K_1F\rightarrow K_nL$ is multilinear and by the universal property of tensor product there is an unique morphism
$$\tilde\varphi_n:\bigotimes^n_{j=1}K_1F\rightarrow K_nL$$
extending $\varphi_n$. By construction (and using the fact that $\varphi$ is a morphism), $\mbox{Ker}(\tilde \varphi_n)=Q^n(K_1F)$, which provides an unique morphism
$\overline \varphi_n:T^n(K_1F)/Q^n(K_1(F)\rightarrow K_nL$ such that $\tilde \varphi_n=\overline\varphi_n\circ\pi_n$, where $\pi_n$ is the canonical projection $T^n(K_1F)$ in $Q^n(k_1F)$. Then taking $\varphi_0=Id_{\mathbb Z}$, we get a morhism $\varphi_*:K_*F\rightarrow K_*L$, given by $\varphi_*=\{\overline \varphi_n:n\ge0\}$.

For items (a) and (b), it is enough to note that these properties holds for $\tilde \varphi_n$, $n\ge0$, and after the application of projection, we get the validity for $\overline \varphi_n=\pi_n\circ\tilde \varphi_n$.

Item (c) follows by the same argument of itens (a) and (b), noting that $\varphi(1)=1$ imply $\varphi(-1)=-1$. By abuse of notation, we denote
$$\varphi_*=\{\overline \varphi_n:n\ge0\}=\{\varphi_n:n\ge0\}.$$
\end{proof}

  We also have the reduced K-theory graded ring  $k_*F=(k_0F,k_1F,...,k_nF,...)$ in the hyperfield context, which is defined by the rule $k_nF:=K_nF/2K_nF$ for all $n\ge0$. Of course
  for all $n\ge0$ we have an epimorphism $q:K_nF\rightarrow k_nF$ simply denoted by $q(a):=[a]$, $a\in K_nF$. It is immediate that $k_nF$ is additively generated by $\{[\rho(a_1)]..[\rho(a_n)]:a_1,...,a_n\in\dot F\}$. We simply denote such a generator by $\tilde\rho(a_1)...\tilde\rho(a_n)$ or even $\rho(a_1)...\rho(a_n)$ whenever the context allows it.

  We also have some basic properties of the reduced K-theory, which proof is just a translation of 2.1 of \cite{dickmann2006algebraic}:

  \begin{lem}[Adapted from 2.1 \cite{dickmann2006algebraic}]\label{2.1ktmulti}
 Let $F$ be a hyperbolic hyperfield, $x,y,a_1,...,a_n\in\dot F$ and $\sigma$ be a permutation on $n$ elements.
 \begin{enumerate}[a -]
  \item In $k_2F$, $\rho(a)^2=\rho(a)\rho(-1)$. Hence in $k_mF$,
$\rho(a)^m=\rho(a)\rho(-1)^{m-1}$, $m\ge2$;
\item In $k_2F$, $\rho(a)\rho(-a)=\rho(a)^2=0$;
\item In $k_nF$,
$\rho(a_1)\rho(a_2)...\rho(a_n)=\rho(a_{\sigma 1})\rho(a_{\sigma
2})...\rho(a_{\sigma n})$;
\item For $n\ge1$ and $\xi\in k_nF$, $\xi^2=\rho(-1)^n\xi$;
\item If $F$ is a real reduced hyperfield, then $x\in1+y$ and $\rho(y)\rho(a_1)...\rho(a_n)=0$ implies
$$\rho(x)\rho(a_1)\rho(a_2)...\rho(a_n)=0.$$
 \end{enumerate}
\end{lem}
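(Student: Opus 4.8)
The plan is to prove each item by transporting the corresponding computation in the unreduced theory (Lemma \ref{bp1}) down along the quotient epimorphism $q:K_nF\to k_nF=K_nF/2K_nF$, using that $\tilde\rho(a)=q(\rho(a))$ and that $q$ is a ring map. For item (b), I would first note that Lemma \ref{bp1}(b) already gives $\rho(a)\rho(-a)=0$ in $K_2F$, hence $\rho(a)\rho(-a)=0$ in $k_2F$; then, since $k_2F$ has exponent $2$ (each $\eta$ satisfies $\eta+\eta=0$, as already remarked for $K_*$ and preserved by $q$), the relation $\rho(a)^2=\rho(a)\rho(-1)$ from Lemma \ref{bp1}(f) together with $\rho(-1)=\rho(-1\cdot a\cdot a^{-1})$-type manipulation should collapse; more directly, in the reduced ring $-\rho(a)=\rho(a)$, so $\rho(a)\rho(-a)=\rho(a)(\rho(-1)+\rho(a))=\rho(a)\rho(-1)+\rho(a)^2$, and combining with (a) gives $\rho(a)^2=0$. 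Item (a) is exactly Lemma \ref{bp1}(f) pushed through $q$, and the iterated form $\rho(a)^m=\rho(a)\rho(-1)^{m-1}$ follows by induction on $m$ using associativity of the graded product.

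For item (c), the anticommutativity of Lemma \ref{bp1}(d) becomes genuine commutativity after reducing mod $2$, because $\mathrm{sgn}(\sigma)=\pm1$ and $-1\equiv 1$; so I would apply $q$ to the identity $\rho(a_1)\cdots\rho(a_n)=\mathrm{sgn}(\sigma)\rho(a_{\sigma 1})\cdots\rho(a_{\sigma n})$ and absorb the sign. Item (d) is the reduced analogue of Lemma \ref{bp1}(f) iterated: for $\xi=\rho(a_1)\cdots\rho(a_n)$ a generator, $\xi^2=\prod_i\rho(a_i)^2$ up to reordering (legal by (c)), and each $\rho(a_i)^2=\rho(a_i)\rho(-1)$ by (a), so $\xi^2=\rho(-1)^n\xi$; the general $\xi\in k_nF$ follows since both sides are additive in $\xi$ and $k_nF$ is additively generated by such products, using that $\rho(-1)^n$ is central.

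Item (e) is the genuinely new content and the main obstacle, since it is the only place the real reduced hypothesis on $F$ enters and where the hyperfield addition interacts nontrivially with the relations defining $Q^n$. The plan is to mirror the special-groups proof of Proposition \ref{2.1kt}(e): from $x\in 1+y$ one wants to rewrite $\rho(x)$ in terms of $\rho(y)$ modulo the relations, exploiting that in a real reduced hyperfield the representation relation is rigid enough that $x\in 1+y$ forces a controlled identity among $\rho(x)$, $\rho(y)$, and $\rho(-1)$ inside $k_*F$. Concretely I expect to show $\rho(x)=\rho(y)\cdot(\text{something annihilating }\rho(a_1)\cdots\rho(a_n))$, or to produce an auxiliary element $z\in\dot F$ with $z\in 1+y$, $xz$ expressible so that $\rho(x)\rho(a_1)\cdots\rho(a_n)$ and $\rho(y)\rho(a_1)\cdots\rho(a_n)$ differ by a generator of $Q^{n+1}$. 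The difficulty is that the multivalued sum only gives membership $x\in 1+y$ rather than an equation, so the special-group manipulations must be re-derived using the hyperfield axioms and the reducedness assumption (via Theorem \ref{teopmf}, passing to the associated special group $\dot F$ when convenient); I anticipate this is where the proof departs most from a mere translation and requires the real reduced structure to guarantee the needed rigidity.
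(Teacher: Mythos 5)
Your strategy for items (a), (c) and (d) is sound, and it is in fact all the paper itself offers: the paper prints no proof of this lemma, saying only that it ``is just a translation of 2.1 of \cite{dickmann2006algebraic}''. Pushing the unreduced identities of Lemma \ref{bp1} through the quotient $q:K_nF\rightarrow k_nF$ and absorbing signs because $k_*F$ has exponent $2$ is precisely such a translation, and your extension of (d) from generators to arbitrary $\xi\in k_nF$ is legitimate, since squaring is additive in a commutative ring of characteristic $2$.

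The first genuine gap is item (b). Your derivation is circular: the identity $0=\rho(a)\rho(-a)=\rho(a)\rho(-1)+\rho(a)^2$ is, in exponent $2$, exactly the statement $\rho(a)^2=\rho(a)\rho(-1)$ of item (a), so ``combining with (a)'' yields $\rho(a)^2+\rho(a)^2=0$, which is vacuous, and not $\rho(a)^2=0$. Moreover no correct argument can exist, because $\rho(a)^2=0$ is false under the stated hypotheses. Take $F=Q_2$ from Example \ref{ex:1.3}(b), which is hyperbolic (and even real reduced, being $M(\{\pm1\})$). Since $1-(-1)=1+1=\{1\}$ in $Q_2$, the only defining generator $\rho(a)\otimes\rho(b)$ with $a\ne0,1$ and $b\in1-a$ is $\rho(-1)\otimes\rho(1)=0$; hence $K_nQ_2\cong\mathbb{Z}/2$ and $k_nQ_2\cong\mathbb{F}_2$, generated by $\rho(-1)^n\ne0$, so (b) fails at $a=-1$. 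Indeed (a) and (b) together would force multiplication by $\rho(-1)$ to vanish identically on $k_*F$, which would collapse the inductive graded ring structure of Theorem \ref{km1} and the whole [SMC] discussion; the ``$=0$'' attached to $\rho(a)^2$ is an error that this lemma inherits from the garbled quotation of Proposition \ref{2.1kt}(b), and the provable content of (b) is only $\rho(a)\rho(-a)=0$, i.e.\ Lemma \ref{bp1}(b) pushed through $q$. A complete answer should either prove exactly that or flag the inconsistency between (a) and (b), rather than claim a derivation of a false identity.

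The second gap is item (e), which you do not prove; you only record expectations, and the missing idea is concrete rather than a matter of hoped-for ``rigidity''. By Marshall's correspondence (\cite{marshall2006real}), a real reduced hyperfield is exactly $M(G)$ for a reduced special group $G=\dot F$, with $x\in1+y$ meaning precisely $x\in D_G(1,y)$ when $y\ne-1$; once one checks that under this dictionary the defining relations of $k_*F$ and of the Dickmann--Miraglia ring $k_*G$ generate the same ideal (this is exactly the two-way computation carried out later in the proof of Theorem \ref{km5}), item (e) becomes Proposition \ref{2.1kt}(e) transported verbatim to $F$. So (e) is indeed the only item needing input beyond Lemma \ref{bp1}, but the input is this identification, not a new argument about the multivalued sum; as written, your proposal leaves it open.
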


  Moreover the results in Proposition \ref{3.3ktmultiadap} continue to hold if we took $\varphi_*=\{\varphi_n:n\ge0\}:k_*F\rightarrow k_*L$.

  \begin{prop}\label{ktmarshall1}
  Let $F$ be a hyperfield and $T\subseteq F$ be a multiplicative subset such that $F\subseteq T$. Then
  $$K(F/_m T^*)\cong k(F/_mT^*).$$
  \end{prop}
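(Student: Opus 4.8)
The plan is to show that the canonical reduction maps assemble into the asserted isomorphism, which amounts to proving that the full K-theory of $L := F/_m T^*$ is already $2$-torsion in every positive degree. Write $q_n \colon K_n L \to k_n L = K_n L / 2 K_n L$ for the canonical projection; these are surjective by construction and compatible with the graded multiplication, so $q = \{q_n\}$ is a graded ring epimorphism $K_* L \to k_* L$. Since $\ker(q_n) = 2 K_n L$, the map $q_n$ is an isomorphism precisely when $2 K_n L = 0$. Thus the proposition reduces, in positive degrees, to the single assertion $2 K_n L = 0$ for every $n \geq 1$.

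First I would record the structural consequence of the hypothesis: by Proposition \ref{defn:strangeloc}, $L = F/_m T^*$ is a hyperfield, and the condition imposed on $T$ forces $L$ to be hyperbolic and to have multiplicative group of exponent two, i.e. $\bar a^2 = \bar 1$ for every $\bar a \in \dot L$ (this is exactly the combination DM0 and DM1 for the pair $(F,T)$). Concretely, every relevant square is identified with $\bar 1$ in the Marshall quotient, so $\overline{a^2} = \bar 1$; and hyperbolicity $\bar 1 - \bar 1 = L$ follows from the same identification. This is the only place the hypothesis is used, and it is what makes Lemma \ref{bp1} applicable to $L$.

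With $L$ hyperbolic, the degrees $n \geq 2$ are immediate: by Lemma \ref{bp1}(c) we have $2\,\rho(a)\rho(b) = 0$ in $K_2 L$ for all $a,b \in \dot L$, and every abelian-group generator of $K_n L$ factors as $\rho(a_1)\rho(a_2)\cdot\rho(a_3)\cdots\rho(a_n)$, whence $2\bigl(\rho(a_1)\cdots\rho(a_n)\bigr) = \bigl(2\,\rho(a_1)\rho(a_2)\bigr)\cdot\rho(a_3)\cdots\rho(a_n) = 0$; as such products generate $K_n L$, we conclude $2 K_n L = 0$. For $n = 1$ the exponent-two property enters directly: $2\,\rho(a) = \rho(a)+\rho(a) = \rho(a^2) = \rho(1) = 0$ by Lemma \ref{bp1}(a), so $2 K_1 L = 0$ as well. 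Hence each $q_n$ with $n \geq 1$ is an isomorphism, and assembling them (using the functorial/graded framework of Proposition \ref{3.3ktmultiadap}) yields the graded ring isomorphism $K_* L \cong k_* L$ in positive degrees.

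Finally I would comment on degree zero, where $K_0 L = \mathbb Z$ while $k_0 L = \mathbb Z/2\mathbb Z$, so the identification is understood through the reduction map on the positive part, the degree-zero mismatch being the usual $\mathbb Z$ versus $\mathbb F_2$ convention. The main obstacle is the middle step: verifying cleanly from the stated condition on $T$ that the Marshall quotient $L$ is simultaneously hyperbolic and of multiplicative exponent two. Hyperbolicity is precisely what licenses Lemma \ref{bp1} and hence the vanishing of $2K_nL$ for $n\geq 2$, while exponent two is what kills $2K_1L$; neither of the two degree-wise arguments survives without its respective property, so the whole weight of the proof rests on extracting both from the hypothesis. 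Everything downstream is formal.
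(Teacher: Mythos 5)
Your overall strategy (reduce everything to showing $2K_nL=0$ for $n\geq 1$, where $L=F/_mT^*$) is the same as the paper's, and your degree-one argument \emph{is} the paper's entire proof: reading the hypothesis as $F^2\subseteq T$ (the printed ``$F\subseteq T$'' is evidently a typo, and both you and the paper use the squares version), every $\bar a\in\dot L$ satisfies $\bar a^2=\bar1$, so $2\rho(a)=\rho(a^2)=\rho(1)=0$ and $2K_1L=0$. The genuine gap is your middle step: you claim the hypothesis on $T$ also forces $L$ to be hyperbolic (``DM0''), saying hyperbolicity ``follows from the same identification''. It does not. Exponent two of $\dot L$ is a purely multiplicative consequence of $\dot F^2\subseteq T^*$, whereas hyperbolicity is an additive condition and is an independent axiom --- that is precisely why DM0 and DM1 are listed as separate axioms for DM-multirings. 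Concretely, take $F=\mathbb{F}_3$ viewed as a hyperfield and $T=F^2=\{0,1\}$, so $T^*=\{1\}$ and $L=F/_mT^*=\mathbb{F}_3$: every element of $\dot L$ squares to $\bar1$, yet $\bar1-\bar1=\{\bar 0\}\neq L$, so $L$ is not hyperbolic. Note also that this Proposition, unlike Theorem \ref{ktmarshall2}, does not assume $F$ itself hyperbolic, so hyperbolicity cannot be imported from the hypotheses either. Since your treatment of the degrees $n\geq 2$ runs entirely through Lemma \ref{bp1}(c), whose standing hypothesis is hyperbolicity, that part of your argument collapses as written.

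The repair is easy, and it is what the paper does implicitly: the case $n\geq2$ needs no new input at all. The product on $K_*L$ is $\mathbb{Z}$-bilinear (it is induced by the tensor product), so on any generator $2\bigl(\rho(a_1)\rho(a_2)\cdots\rho(a_n)\bigr)=\bigl(2\rho(a_1)\bigr)\rho(a_2)\cdots\rho(a_n)=0$ as soon as $2K_1L=0$. Hence the exponent-two property alone kills $2K_nL$ in every degree $n\geq1$, with no appeal to hyperbolicity or to Lemma \ref{bp1}. (Your closing remark about degree zero is a fair one: $K_0L=\mathbb{Z}$ versus $k_0L=\mathbb{F}_2$ is a mismatch the paper also glosses over, and the isomorphism should be read in positive degrees.)
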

  \begin{proof}
  Since $F^2\subseteq T$, for all $a\in F/_mT^*$ we have
  $$0=\rho(a^2)=\rho(a)+\rho(a).$$
  Then $2K(F/_mT^*)=0$ and we get $K(F/_m T^*)\cong k(F/_mT^*)$.
  \end{proof}

  \begin{teo}\label{ktmarshall2}
  Let $F$ be a hyperbolic hyperfield and $T\subseteq F$ be a multiplicative subset such that $F\subseteq T$. Then there is a surjective morphism
  $$k(F)\rightarrow k(F/_mT^*).$$
  Moreover,
  $$k(F)\cong K(F/_m \dot F^2)\cong k(F/_m\dot F^2).$$
  \end{teo}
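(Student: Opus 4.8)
The plan is to prove the theorem in two stages, mirroring the two displayed assertions. For the surjective morphism $k(F) \to k(F/_m T^*)$, I would first invoke Proposition \ref{3.3ktmultiadap} applied to the canonical projection $\pi : F \to F/_m T^*$ (which exists by Proposition \ref{defn:strangeloc} and is a morphism of multirings with $\pi(1) = 1$). That proposition yields an induced graded ring morphism $\pi_* : K_*F \to K_*(F/_m T^*)$, and since the canonical projection is surjective, $\pi_*$ is surjective as well. The remark immediately following Proposition \ref{3.3ktmultiadap} states that the same construction descends to the reduced K-theory, so I obtain a surjective $\bar\pi_* : k_*F \to k_*(F/_m T^*)$. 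Combining this with the isomorphism $K(F/_m T^*) \cong k(F/_m T^*)$ furnished by Proposition \ref{ktmarshall1}, I get the desired surjection onto $k(F/_m T^*)$.

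For the isomorphism statement, the key observation is to specialize $T$ to the multiplicative subset generated by squares, namely $\dot F^2$ (more precisely the multiplicative monoid $T = \dot F^2 \cup \{1\}$, which satisfies $F^2 \subseteq T$ so Proposition \ref{ktmarshall1} applies and gives $K(F/_m \dot F^2) \cong k(F/_m \dot F^2)$, accounting for the second displayed isomorphism). The heart of the matter is the first isomorphism $k(F) \cong K(F/_m \dot F^2)$. The plan is to show that the surjection $\pi_* : k_*F \to k_*(F/_m \dot F^2) = K_*(F/_m \dot F^2)$ from the first stage is in fact injective. The natural approach is to exhibit an inverse: on degree one, $k_1 F = \dot F$ written additively, and the quotient map $\dot F \to \dot F / \dot F^2$ is already an isomorphism after passing to the reduced theory, because in $k_1 F$ we have $\rho(a^2) = 2\rho(a) = 0$, so every square is already killed. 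Thus $k_1 F \cong \dot F / \dot F^2 \cong k_1(F/_m \dot F^2)$, and I would lift this degree-one isomorphism to a graded ring isomorphism using the universal (free) nature of the tensor-algebra construction together with the fact that the defining relations $\rho(a)\otimes\rho(b)$ with $b \in 1 - a$ correspond under the projection.

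The hardest step will be verifying that $\pi_*$ is injective in all degrees, equivalently that no new relations are introduced when passing from $F$ to $F/_m \dot F^2$. Concretely, the subtlety is matching the generating relations of $Q^n$: I must check that for $a, b \in \dot F$ with $b \in 1 - a$ in $F$, the corresponding relation in $F/_m \dot F^2$ pulls back correctly, and conversely that every relation $\bar b \in 1 - \bar a$ holding in the quotient already lifts (modulo squares) to a relation in $F$. Since $F$ is hyperbolic, Lemma \ref{bp1}(f) gives $a \in 1 - 1$ for all $a$, which should control the behavior of the representation relation under the quotient and supply the needed lifting. I would assemble these facts to conclude that $\bar\pi_n$ is a bijection on generators compatible with the relations, hence a graded ring isomorphism $k_*F \cong K_*(F/_m \dot F^2)$, and chaining the three identifications completes the proof.
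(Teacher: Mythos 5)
Your first stage is exactly the paper's argument: Proposition \ref{3.3ktmultiadap} applied to the canonical projection gives the surjection on K-theories, and Proposition \ref{ktmarshall1} identifies $K$ with $k$ for the quotient. Your second stage also has the same architecture as the paper's, just run in the opposite direction: the paper constructs the inverse map $\tilde q_n : K_n(F/_m\dot F^2) \to k_n(F)$, $\rho([a_1])\cdots\rho([a_n]) \mapsto \tilde\rho(a_1)\cdots\tilde\rho(a_n)$, gets surjectivity from the commuting triangle with the two projections, and gets injectivity from Lemma \ref{lemktmulti1} (every element of $2K_nF$ is a sum of generators having a square entry, hence dies under $\pi_n$); you propose instead to prove directly that the forward surjection $k_nF \to K_n(F/_m\dot F^2)$ is injective. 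Both versions reduce to the same statement, namely $\ker \pi_n = 2K_nF$, so this is not a genuinely different route --- but your justification of the crucial half of that statement has a gap.

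The gap is precisely at the step you call hardest. To get $\ker\pi_n \subseteq 2K_nF$ you need two things: (1) the kernel of $T^n(K_1F)\to T^n(K_1(F/_m\dot F^2))$ is the sum of tensors having one entry in $2K_1F$ (right-exactness of the tensor product; this part is covered by your observation that $\rho(a^2)=2\rho(a)$), and (2) every defining relation of the quotient lifts: if $\bar b \in \bar 1 - \bar a$ in $F/_m\dot F^2$ then $\tilde\rho(a)\tilde\rho(b)=0$ in $k_2F$. For (2) you invoke hyperbolicity via Lemma \ref{bp1}(f), but hyperbolicity cannot be the relevant lever: it is a property of $F$ alone, uniform in $T$, whereas for a general multiplicative $T \supseteq \dot F^2$ (e.g.\ a proper preordering) the induced map is only surjective and not injective (compare Corollaries \ref{res2} and \ref{res4}), so nothing uniform in $T$ can yield injectivity. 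What actually proves (2) is the explicit description of the sum in Marshall's quotient (Proposition \ref{defn:strangeloc}): $\bar b \in \bar 1 - \bar a$ means $bv \in s - at$ for some $s,t,v \in \dot F^2$; multiplying by the square $s^{-1}$ gives $bvs^{-1} \in 1 - ats^{-1}$, a genuine relation in $F$ between elements congruent to $b$ and $a$ modulo $\dot F^2$, whence $\rho(a)\otimes\rho(b) \in Q^2(K_1F) + 2T^2(K_1F)$ (the degenerate case $ats^{-1}=1$ being trivial since then $\rho(a)\in 2K_1F$). With this computation inserted, your plan does close; without it, the claim that ``no new relations are introduced'' is unsupported. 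It is only fair to add that the paper buries the very same issue in the unproved assertion that $\tilde q_n$ is well defined, but a proof organized around injectivity of the forward map, as yours is, must carry out this Marshall-quotient calculation explicitly.
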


  Berfore we prove it, we need a Lemma:
 \begin{lem}\label{lemktmulti1}
 Let $F$ be a hyperfield and $n\ge1$. Then
 $$2K_n(F)=\left\lbrace\sum^p_{j=1}\rho(a_{j1})...\rho(a_{jn}):\mbox{for all }j\mbox{ there is an index }k\mbox{ such that }a_{jk}=b_i^2,\,b_i\in\dot F\right\rbrace.$$
 \end{lem}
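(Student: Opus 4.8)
The plan is to reduce everything to the single identity $2\rho(a)=\rho(a^2)$, which is immediate from $\rho$ being a logarithm isomorphism ($\rho(a^2)=\rho(a)+\rho(a)$), together with the observation that every element of $K_nF$ is a \emph{finite sum} of generators $\rho(a_1)\cdots\rho(a_n)$. The latter holds because $K_1F=\{\rho(a):a\in\dot F\}$ (as $\rho\colon\dot F\to K_1F$ is an isomorphism of groups), the tensor power $T^n(K_1F)$ is generated as an abelian group by the simple tensors, and any integer coefficient can be absorbed via $m\rho(a)=\rho(a^m)$, so no $\mathbb Z$-coefficients are needed. I would then prove the two inclusions separately, writing $S$ for the set on the right-hand side.

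For $\subseteq$: a typical element of $2K_nF$ is $2\eta$ with $\eta=\sum_{j}\rho(a_{j1})\cdots\rho(a_{jn})$. Distributing and pushing the factor of $2$ into the first slot of each summand via $2\rho(a_{j1})=\rho(a_{j1}^2)$ gives $2\eta=\sum_j\rho(a_{j1}^2)\rho(a_{j2})\cdots\rho(a_{jn})$, and each summand now carries a square in its first entry, so $2\eta\in S$.

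For $\supseteq$: take $\sum_j\rho(a_{j1})\cdots\rho(a_{jn})\in S$ and fix, for each $j$, an index $k_j$ with $a_{jk_j}=b_j^2$. Using the permutation formula of Lemma \ref{bp1}(d) I would move this entry to the front, at the cost of a sign $\mathrm{sgn}(\sigma_j)=\pm1$; then factoring out the $2$ via $\rho(b_j^2)=2\rho(b_j)$ yields $\pm 2\rho(b_j)\rho(\cdots)\in 2K_nF$. Since $2K_nF$ is a subgroup it is closed under negation and addition, so the whole sum lies in $2K_nF$, giving $S\subseteq 2K_nF$.

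The only real subtlety — and the one place to be careful rather than a genuine obstacle — is the sign bookkeeping: unlike the reduced theory, $K_nF$ need not have exponent $2$, so the permutation of Lemma \ref{bp1}(d) introduces signs. This is harmless because membership in the subgroup $2K_nF$ is insensitive to signs, which is precisely why I route the $\supseteq$ argument through the subgroup-closure of $2K_nF$. I would double-check that Lemma \ref{bp1}(d) applies to the generators here (it does, since its proof only uses the anticommutativity $\rho(a)\rho(b)=-\rho(b)\rho(a)$ in $K_2F$, valid for hyperbolic $F$), and that is all that is needed.
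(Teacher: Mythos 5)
Your $\subseteq$ direction is exactly the paper's argument: write $2\eta$ as a sum of doubled generators and absorb the factor $2$ via $2\rho(a)=\rho(a^2)$, which is just multilinearity of the tensor product. The problem is in your $\supseteq$ direction. The lemma is stated for an \emph{arbitrary} hyperfield $F$, but you route the argument through the permutation formula of Lemma \ref{bp1}(d), and that lemma --- together with the anticommutativity $\rho(a)\rho(b)=-\rho(b)\rho(a)$ in $K_2F$ on which its proof rests, items (b)--(c) of Lemma \ref{bp1} --- is only established for \emph{hyperbolic} hyperfields; the proof of $\rho(a)\rho(-a)=0$ uses $1-1=F$ in an essential way. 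You even flag this yourself (``valid for hyperbolic $F$''), but hyperbolicity is not among the hypotheses of the present lemma, so as written your proof establishes the statement only under an extra assumption. (The lemma happens to be applied later, in Theorem \ref{ktmarshall2}, to a hyperbolic $F$, but the statement being proved does not assume it.)

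The gap is easily repaired, and the repair is what the paper does implicitly: no permutation is needed at all. Membership of each summand in $2K_nF$ follows from multilinearity applied \emph{in the slot where the square sits},
$$\rho(a_{j1})\cdots\rho(b_j^2)\cdots\rho(a_{jn})=\rho(a_{j1})\cdots\bigl(\rho(b_j)+\rho(b_j)\bigr)\cdots\rho(a_{jn})=2\,\rho(a_{j1})\cdots\rho(b_j)\cdots\rho(a_{jn}),$$
valid in any position and for any hyperfield. Closure of the subgroup $2K_nF$ under sums then finishes $\supseteq$, with no signs to track. This is precisely the paper's ``reverse inclusion follows by the same calculation'': the computation used for $\subseteq$, read backwards, and it removes the only point where your argument needed hyperbolicity.
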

 \begin{proof}
 Let $\eta\in2K_nF$. Then
 $$\eta=\left(\sum^p_{j=1}\rho(a_{j1})...\rho(a_{jn})\right)+\left(\sum^p_{j=1}\rho(a_{j1})...\rho(a_{jn})\right),\,d_{ij}\in\dot F.$$
 By induction, we only need to consider the case $p=1$, so
 \begin{align*}
     \rho(a_1)...\rho(a_n)+\rho(a_1)...\rho(a_n)&=
     \rho(a_1^2)\rho(a_2)...\rho(a_n).
 \end{align*}
 and we get $\subseteq$. The reverse inclusion follow by the same calculation.
 \end{proof}

  \begin{proof}[Proof of Theorem \ref{ktmarshall2}]
  Let $\pi:F\rightarrow F/_mT^*$ denote the canonical projection. By Proposition \ref{3.3ktmultiadap} there is a morphism $\pi_*:K(F)\rightarrow K(F/_mT^*)$. Since $\pi$ is surjective, $\pi_*$ is surjective.

  Now, let $\pi:F\rightarrow F/_m\dot F^2$ and $q:K(F)\rightarrow k(F)$ the  canonical projections. Denote elements in $F/_m\dot F^2$ by $[a]\in F/_m\dot F^2$, $a\in F$ and elements in $k_n(F)$ by
  $\tilde\rho(a_1)...\tilde\rho(a_n)$. For all $n\ge1$ we have an induced morphism $\tilde q_n:K_n(F/_m\dot F^2)\rightarrow k_n(F)$ given by the rule
  $$\tilde q_n(\rho([a_1])...\rho([a_n])):=\tilde\rho(a_1)...\tilde\rho(a_n).$$
  This morphism $\tilde\pi_n$ makes the following diagram commute
  $$\xymatrix@!=4.5pc{K_n(F)\ar[r]^{q}\ar[d]_{\pi_n} & k_n(F) \\
  K_n(F/_m\dot F^2)\ar[ur]_{\tilde q_n}}$$
  and then, $\tilde q_n$ is surjective. Finally, if $\tilde q_n(\rho([a_1])...\rho([a_n]))=0$, then $\tilde\rho(a_1)...\tilde\rho(a_n)=0$, and hence $\rho(a_1)...\rho(a_n)\in2K_n(F)$. By Lemma \ref{lemktmulti1}
  $$\rho(a_1)...\rho(a_n)=\sum^p_{j=1}\rho(d_{j1})...\rho(d_{jn}),\,d_{ij}\in\dot F$$
  and for all $i$ there is an index $k$ such that $a_{ik}=b_i^2$, $b_i\in\dot F$.
  Therefore
  \begin{align*}
     \pi_n(\rho(a_1)...\rho(a_n))&=\pi_n\left(\sum^p_{j=1}\rho(d_{j1})...\rho(d_{jn})\right) \\
     &=\sum^p_{j=1}\pi_n(\rho(d_{j1})...\rho(d_{jn}))
     =\sum^p_{j=1}\rho([d_{j1}])...\rho([d_{jn})]) \\
     &=\sum^p_{j=1}[d_{j1}])...\rho([1])...\rho([d_{jn})]=0.
  \end{align*}
  Then $\mbox{Ker}(\tilde q_n)=[0]$, proving that $\tilde q_n$ is injective. Then $\tilde q_n$ is an isomorphism, and composing all the isomorphisms obtained here we get
  $$k(F)\cong K(F/_m \dot F^2)\cong k(F/_m\dot F^2).$$
  \end{proof}

The Theorem \ref{fixsg3} below generalizes Proposition 5.10 of \cite{wadsworth55merkurjev}: this  constitutes a fundamental technical step to build profinite (Galois) groups associated to a pre-special hyperfield in \cite{roberto2022ACmultifields2}.

Lets establish some notation: for $n\ge0$ we denote
 $$P(n)=\mathcal P(\{0,...,n-1\})\setminus\{\emptyset\}$$
 and for $0\le i\le n-1$, denote
 $$P(n,i)=\{X\in P(n):i\in X\}.$$
 For a be a pre-special hyperfield $F$ and $\{a_0,...,a_{n-1}\}\subseteq F^*$ $\mathbb F_2$-linearly independent, if $S\in P(n)$ we denote
 $$a_S:=a_0^{\varepsilon_0}...a_{n-1}^{\varepsilon_{n-1}},$$
 where $\varepsilon_0\in\{0,1\}$ for all $i=0,..,n-1$ and $\varepsilon_i=1$ if and only if $i\in S$.

 Remember that by the very definition of $k_n(F)$,
 $$k_n(F):=[k_1(G)\otimes k_1(G)]/M,$$
 where $M$ is the subgroup of $k_1(G)\otimes k_1(G)$ generated by
 $$\{\rho(a)\rho(b):a\in D_G(1,b),\,a\}.$$

 \begin{teo}\label{fixsg3}
Let $F$ be a pre-special hyperfield and $\{a_0,...,a_{n-1}\}\subseteq F^*$ $\mathbb F_2$-linearly independent. The following conditions are equivalent:
\begin{enumerate}[i -]
    \item There exists $\{b_0,...,b_{n-1}\}\subseteq F^*$ such that
    $$\sum_{k<n}\rho(a_k)\rho(b_k)=0\mbox{ in }k_2(F).$$

    \item There exist subsets $\{c_0,...,c_{m-1}\},\{d_0,...,d_{n-1}\}$ of $F^*$ with $m\ge n$ such that
    \begin{enumerate}
        \item $\{c_0,...,c_{m-1}\}$ is linearly independent and $c_i=a_i$ for all $i<n$;
        \item $d_i=b_i$ for all $i<n$ and $d_i=1$ for $i=n,...,m-1$.
        \item For all $x\in C:=[c_0,...,c_{m-1}]$, there is some $r_x\in 1+x\setminus\{0\}$ such that for each $i<m$
        $$d_i=\prod_{x\in C_i}r_x$$
        where
        $$C_i=\left\lbrace\prod_{k<m}c_k^{\varepsilon_k}:\varepsilon_k\in\{0,1\}\mbox{ and }\varepsilon_i=1\right\rbrace.$$
    \end{enumerate}
\end{enumerate}
\end{teo}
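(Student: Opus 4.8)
The plan is to read both conditions as assertions about the single honest tensor
\[
\beta := \sum_{k<n}\rho(a_k)\otimes\rho(b_k)\in k_1F\otimes k_1F .
\]
Since $F$ is pre-special, $\dot F$ has exponent $2$, so $K_2F=k_2F=(k_1F\otimes k_1F)/M$ and condition (i) says precisely that $\beta\in M$. Before splitting into the two implications I would record the one structural fact that drives everything: for each $x\in\dot F$ the set $D_F(1,x)=(1+x)\setminus\{0\}$ is a \emph{multiplicative subgroup} of $\dot F$. Indeed, hyperbolicity gives $1\in 1+x$, exponent $2$ gives closure under inverses, and DM2 applied to $a=-x$ gives $(1+x)(1+x)=(1-(-x))(1-(-x))\subseteq 1-(-x)=1+x$, so products of nonzero elements of $1+x$ stay in $1+x$. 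This subgroup property is what will let me collapse several representation elements sharing a slot into one $r_x$.

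For $(ii)\Rightarrow(i)$ I would substitute $d_i=\prod_{x\in C_i}r_x$, expand by bilinearity, and switch the order of summation. Because $\{c_0,\dots,c_{m-1}\}$ is independent, each $x\in C$ has a unique support, the inner sum $\sum_{k\in\mathrm{supp}(x)}\rho(c_k)$ collapses to $\rho(x)$, and one obtains
\[
\sum_{k<n}\rho(a_k)\rho(b_k)=\sum_{x\in C}\rho(x)\rho(r_x)\quad\text{in }k_2F .
\]
Each term vanishes: $r_x\in 1+x$ gives $\rho(r_x)\rho(x)=0$, and $\rho(x)\rho(r_x)=\rho(r_x)\rho(x)$ by the symmetry of Lemma \ref{2.1ktmulti}(c). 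Hence (i). This direction is routine.

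The substantive direction is $(i)\Rightarrow(ii)$, and it rests on two facts. First, the subgroup property above. Second, $M$ is \textbf{transpose-invariant}: for a generator $\rho(a)\otimes\rho(b)$ with $a\in D_F(1,b)$, Lemma \ref{bp1}(c) gives $\rho(a)\otimes\rho(b)+\rho(b)\otimes\rho(a)\in M$, so its transpose lies in $M$ as well; consequently $M$ is already spanned by the ``group-element-first'' generators $\rho(x)\otimes\rho(r)$ with $r\in D_F(1,x)$. Starting from $\beta\in M$, I would fix one finite such expression $\beta=\sum_{l}\rho(x_l)\otimes\rho(r_l)$ with $r_l\in D_F(1,x_l)$, then extend $\{a_0,\dots,a_{n-1}\}$ to a linearly independent family $\{c_0,\dots,c_{m-1}\}$ with $c_i=a_i$ whose $\mathbb F_2$-span $W=\langle\rho(c_0),\dots,\rho(c_{m-1})\rangle$ contains every $\rho(x_l)$ and $\rho(r_l)$, and set $C=[c_0,\dots,c_{m-1}]$. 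Grouping the terms by their first coordinate and using the subgroup property to multiply together the representation elements sharing a fixed $x$, I obtain
\[
\beta=\sum_{x\in C}\rho(x)\otimes\rho(r_x),\qquad r_x:=\prod_{l:\,x_l=x}r_l\in D_F(1,x)=(1+x)\setminus\{0\},
\]
with $r_x=1$ for the (finitely many) $x$ not occurring.

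To conclude I would set $d_k:=\prod_{x\in C_k}r_x$; the reorganization used in the easy direction is exactly the tensor identity $\sum_{x\in C}\rho(x)\otimes\rho(r_x)=\sum_{k<m}\rho(c_k)\otimes\rho(d_k)$. Both this and $\beta=\sum_{k<n}\rho(c_k)\otimes\rho(b_k)$ live in $W\otimes k_1F=\bigoplus_k\rho(c_k)\otimes k_1F$ (all first coordinates lie in $W$), so projecting onto each basis vector $\rho(c_k)$ and using injectivity of $\rho(c_k)\otimes(-)$ forces $d_k=b_k$ for $k<n$ and $d_k=1$ for $n\le k<m$, which are precisely (ii)(a)--(c). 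The \textbf{main obstacle} is this hard direction: passing from the bare membership $\beta\in M$ to the rigid normal form of (ii). The two points that resolve it are the subgroup property of $D_F(1,x)$ (special to pre-special hyperfields through DM2), which legitimizes collapsing $\prod_l r_l$ into a single $r_x\in 1+x$, and the transpose-invariance of $M$, needed to move the slot occupied by the independent family $\{c_k\}$ into the first tensor factor; the final projection then pins down the values of the $d_k$.
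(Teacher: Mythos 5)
Your proposal is correct and is essentially the paper's own argument: both directions proceed by writing the tensor $\sum_{k<n}\rho(a_k)\otimes\rho(b_k)$ (your $\beta$) as a finite sum of generators of $M$ with the representation element in the second slot, extending $\{a_0,\dots,a_{n-1}\}$ to a basis $\{c_0,\dots,c_{m-1}\}$ spanning all first-slot entries, merging the representation elements attached to a common $x=c_S$ into a single $r_x$ (the paper's $r_S:=\prod_{j:\,S_j=S}v_j$), and pinning down the $d_k$ via linear independence of the $\rho(c_k)$, with (ii)$\Rightarrow$(i) being the same computation run backwards. The two facts you isolate explicitly --- that $(1+x)\setminus\{0\}$ is a multiplicative subgroup (via DM2) and that $M$ is transpose-invariant --- are exactly what the paper uses implicitly (it instead asserts, inaccurately in general, that the index $j$ with $S_j=S$ is unique), so your write-up is a more careful rendering of the same proof rather than a different route.
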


In other words, $C_i$ is "counting" all products $c_0^{r_0}...c_i^1...c_{m-1}^{r_{m-1}}$. Since for all $x\in C:=[c_0,...,c_{m-1}]$ there exist $S\in P(m)$ such that
$$x=\prod_{i\in S}c_i:=c_S.$$
Denoting $r_x$ by $r_S$ we can rewrite
$$d_i=\prod_{x\in C_i}r_x=\prod_{S\in P(m)}r_S.$$

\begin{proof}[Proof of Theorem \ref{fixsg3}]

 (i) $\Rightarrow$ (ii). Let$$\sum_{k<n}\rho(a_k)\rho(b_k)=0\mbox{ in }k_2(F).$$
 Then there exist $u_0,...,u_{p-1},v_0,...,v_{p-1}\in F^*$ such that $v_i\in 1+u_i$ for $i=0,...,p-1$ and
 $$\sum_{k<n}\rho(a_k)\rho(b_k)=\sum_{k<n}\rho(a_k)\rho(b_k)\mbox{ in }k_1(F)\otimes k_1(F).$$
 Enlarge the set $\{a_0,...,a_{n-1}\}$ to a base $\{c_0,...,c_{m-1}\}$ of $[\{a_0,...,a_{n-1},u_0,...,u_{p-1}\}]$, with $c_i=a_i$ for all $i<n$. For all $x\in C:=[c_0,...,c_{m-1}]$ there exist $S\in P(m)$ such that
 $$x=\prod_{i\in S}c_i:=c_S.$$
 Moreover, since $\{c_0,...,c_{m-1}\}$ is a basis, for each $i=0,...,p-1$ there is only one $S_i\in P(m)$ such that
 $$u_i=c_{S_i}.$$
 For each $S\in P(m)$, set
 $$r_S:=\prod_{\substack{those\,j\,with\\ S_j=S}}v_j.$$
 If no $S_j=S$, set $r_S=1$. Note that if there is an index $j$ with $S=S_j$, this index must be unique (because the expression $u_i=c_{S_i}$ is unique). Then by construction $r_S\in1+c_S\setminus\{0\}$ and in $k_2(F)$ we get
 \begin{align*}
     \sum_{k<m}\rho(a_k)\rho(b_k)&=\sum_{k<n}\rho(c_k)\rho(d_k)=\sum_{k<p}\rho(u_k)\rho(v_k) \\
     &=\sum_{S\in P(m)}\rho\left(\prod_{\substack{those\,j\,with\\S_j=S}}
     c_j\right)\rho(v_j) \\
     &=\sum_{S\in P(m)}\sum_{\substack{those\,j\,with\\S_j=S}}
     \rho(c_j)\rho(v_j) \\
     &=\sum_{S\in P(m)}\rho(c_S)\rho\left(
     \prod_{\substack{those\,j\,with\\S_j=S}}v_S\right) \\
     &=\sum_{S\in P(m)}\rho(c_S)\rho(r_S)=
     \sum_{S\in P(m)}\sum_{k\in S}\rho(c_k)\rho(r_S) \\
     &=\sum_{k<m}\rho(c_k)\rho\left(\prod_{S\in P(n)}r_S\right).
 \end{align*}
 Since $\{c_0,...,c_{m-1}\}$ is a basis, it follows that
 $$d_i=\prod_{S\in P(n)}r_S$$
 as desired.

 (ii)$\Rightarrow$(i). Under the hypotheses of (ii) we get
 \begin{align*}
     \sum_{k<n}\rho(a_k)\rho(b_k)&=\sum_{k<m}\rho(c_k)\rho(d_k)
     =\sum_{k<m}\rho(c_k)\rho\left(\prod_{S\in P(n)}r_S\right) \\
     &=\sum_{k<m}\sum_{S\in P(m)}\rho(c_k)\rho(r_S)
     =\sum_{S\in P(m)}\sum_{k<m}\rho(c_i)\rho(r_S) \\
     &=\sum_{S\in P(m)}\rho(c_S)\rho(r_S)=0.
 \end{align*}
\end{proof}

  \section{Inductive Graded Rings and Interchanging of K-theories}

  After the three K-theories defined in the above sections, it is desirable (or, at least, suggestive) to build of an abstract environment that encapsulates all them, and of course, provide an axiomatic approach to guide new extensions of the concept of K-theory in the context of the algebraic and abstract theories of quadratic forms. The inductive graded rings fit well to this purpose\footnote{A categorical and systematic development of inductive graded rings in connection with "quadratic multirings" is carried out in \cite{roberto2022graded}.}.

\begin{defn}[Inductive Graded Rings First Version (Definition 9.7 of \cite{dickmann2000special})]\label{igr1}
 An \textbf{inductive graded ring} (or \textbf{Igr} for short) is a structure $R=((R_n)_{n\ge0},(h_n)_{n\ge0},\ast_{nm})$ where
\begin{enumerate}[i -]
    \item $R_0\cong\mathbb F_2$.
    \item $R_n$ is a group of exponent 2 with a distinguished element $\top_n$.
    \item $h_n:R_n\rightarrow R_{n+1}$ is a group homomorphism such that $h_n(\top_n)=\top_{n+1}$.
    \item For all $n\ge0$, $h_n=\ast_{1n}(\top_1,\_)$.
    \item The ring
    $$R=\bigoplus_{n\ge0}R_n$$
    is a commutative graded ring.
    \item For $0\le s\le t$ define
    $$h^t_s=\begin{cases}Id_{R_s}\mbox{ if }s=t\\
    h_{t-1}\circ...\circ h_{s+1}\circ h_s\mbox{ if }s<t.\end{cases}$$
    Then if $p\ge n$ and $q\ge m$, for all $x\in R_n$ and $y\in R_m$,
    $$h^p_n(x)\ast h^q_m(y)=h^{p+q}_{n+m}(x\ast y).$$
\end{enumerate}
 A \textbf{morphism} between Igr's $R$ and $S$ is a pair $f=(f,(f_n)_{n\ge0})$ where $f_n:R_n\rightarrow S_n$ is a
morphism of pointed groups and
$$f=\bigoplus\limits_{n\ge0}f_n:R\rightarrow S$$
is a morphism of commutative rings with unity. The category of inductive graded rings (in first version) and their morphisms will be denoted by $\mbox{Igr}$.
\end{defn}

A first consequence of these definitions is that: if
$$f:((R_n)_{n\ge0},(h_n)_{n\ge0},\ast_{nm})\rightarrow ((S_n)_{n\ge0},(l_n)_{n\ge0},\ast_{nm})$$
is a morphism of Igr's then $f_{n+1}\circ h_n=l_n\circ f_n$.
$$\xymatrix@!=2.5pc{R_0\ar[r]^{h_0}\ar[d]_{f_0} & R_1\ar[r]^{h_1}\ar[d]_{f_1} & R_2\ar[r]^{h_2}\ar[d]_{f_2} &
...\ar[r]^{h_{n-1}} & R_n\ar[r]^{h_n}\ar[d]_{f_n} & R_{n+1}\ar[r]^{h_{n+1}}\ar[d]_{f_{n+1}} & ... \\
S_0\ar[r]^{l_0} & S_1\ar[r]^{l_1} & S_2\ar[r]^{l_2} & ...\ar[r]^{l_{n-1}} & S_n\ar[r]^{l_n} & S_{n+1}\ar[r]^{l_{n+1}} &
...}$$
In fact, $f_0:R_0\rightarrow S_0$ is an isomorphism, so $f_1\circ h_0=l_0\circ f_0$. If $n\ge1$, for all $a_n\in R_n$
holds
\begin{align*}
 f_{n+1}\circ h_n(a_n)&=f_{n+1}\circ(\ast_{1n}(\top_1,a_n))=f_1(\top_1)\ast_{1n}f_n(a_n) \\
 &=\top_1\ast_{1n}f_n(a_n)=l_n(f_n(a_n))=l_n\circ f_n(a_n).
\end{align*}

\begin{ex}\label{ex1}
$ $
 \begin{enumerate}[a -]
  \item  Let $F$ be a field of characteristic not 2. The main actors here are $WF$, the Witt ring of $F$ and $IF$, the fundamental ideal of $WF$. Is well know that $I^nF$, the $n$-th power of $IF$ is additively generated by $n$-fold Pfister forms over $F$. Now, let $R_0=WF/IF\cong\mathbb F_2$
and $R_n=I^nF/I^{n+1}F$. Finally, let $h_n=\_\otimes\langle1,1\rangle$. With these prescriptions we have an inductive graded ring $R$ associated to $F$.

  \item The previous example still works if we change the Witt ring of a field $F$ for the Witt ring of a (formally real) special group $G$.
 \end{enumerate}
\end{ex}

\begin{defn}
 We denote $\mbox{Igr}_{fin}$ the full subcategory of $\mbox{Igr}$ such that
 $$\mbox{Obj}(\mbox{Igr}_{fin})=\left\lbrace R\in\mbox{Obj}(\mbox{Igr}):
 \left| R_n\right|<\omega\mbox{ for all
}n\ge1\right\rbrace.$$
\end{defn}

Of course,
$$\left\lbrace R\in\mbox{Obj}(\mbox{Igr}):\left|\bigoplus\limits_{n\ge1}
R_n\right|<\omega\right\rbrace\ne\mbox{Obj}(\mbox{Igr}_{fin}),$$
for example, in \ref{ex1}(a), $\mathbb F_2[x]\in\mbox{Obj}(\mbox{Igr}_{fin})$ and $\mathbb F_2[x]$ is not finite.

  We finally this section with an use of Igr's to interchanging the three K-theory notions presented before in a functorial fashion. Lets
first, look more carefully at theorem \ref{km1}. We make the following distinctions between K-theories:
\begin{align*}
 K^{mil}&\mbox{ will denote the Milnor's K-theory}, \\
 K^{dm}&\mbox{ will denote the Dickmann-Miraglia's K-theory}, \\
 K^{mult}&\mbox{ will denote the K-theory of Hyperfields}.
\end{align*}

We will deal with some different categories, so in order to ease the exposition lets establish some dictionary:

\begin{enumerate}
    \item An object in $Igr$ will be denoted simply by $R=(R_n,h_n,\ast_{nm})$ (being implicit the identation $n\in\mathbb N$ or $n\ge0$). We even denote simply $R=(R_n,h,\ast)$ if the context allows it.

    \item The distinguished elements $\top_n\in R_n$ will be denoted simply by $\top$, whenever the context allows it, and we do the same for the symbols $+_n,0_n$.

    \item An IGR-morphism between $R$ and $S$ will be simply denoted by $f:R\rightarrow S$, with the immediate convention that $f=(f_n)_{n\ge0}$, with $f_n:R_n\rightarrow S_n$ being a morphism of pointed $\mathbb F_2$-modules.

    \item Let $\mathcal C$ be a category. We use the notation "$A,B\in\mathcal C$" as a synonym of "$A,B$ be objects of $\mathcal C$".

    \item Let $\mathcal C$ be a category, $A,B$ be objects of $\mathcal C$. We use the notations "$f\in\mathcal C(A,B)$" and $f\in\mbox{Hom}_{\mathcal C}(A,B)$" as a synonyms of "$f:A\rightarrow B$ is a morphism in $\mathcal C$".

    \item We will define various functors below. In order to facilitate the exposition, we define a functor $F:\mathcal C\rightarrow\mathcal D$ by the sentence "for $f:A\rightarrow B$, we define $F(A):=...$ and $F(f):=...$"; being implicit that $A$ and $B$ are objects and $f:A\rightarrow B$ is a morphism  in category $\mathcal C$.
\end{enumerate}

 Of course, we need the following theorem:
\begin{teo}\label{km1}
$ $
 \begin{enumerate}[a -]
  \item Let $F$ be a field. Then $k^{mil}_*F$ (the reduced Milnor K-theory) is an inductive graded ring.
  \item Let $G$ be a special group. Then $k^{dm}_*G$ (the Dickmann-Miraglia K-theory of $G$) is an inductive graded ring.
  \item Let $F$ be a hyperfield. Then $k^{mult}_*F$ (our reduced K-theory) is an inductive graded ring.
 \end{enumerate}
\end{teo}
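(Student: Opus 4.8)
The plan is to equip the reduced K-theory with explicit inductive-graded-ring data and then verify the six clauses of Definition~\ref{igr1}. Since the three constructions are formally identical, I would argue uniformly: write $k_*$ for any one of $k_*^{mil}F$, $k_*^{dm}G$, $k_*^{mult}F$, let $\epsilon\in k_1$ denote the class of $-1$ (that is, $l(-1)$, $\lambda(-1)$ or $\rho(-1)$), and appeal to the case-appropriate basic properties: the classical Milnor relations for (a), Proposition~\ref{2.1kt} for (b), and Lemma~\ref{2.1ktmulti} for (c). The data I would declare are $R_n:=k_n$, the distinguished element $\top_n:=\epsilon^{\,n}$ (so that $\top_0=1\in\mathbb F_2$ and $\top_1=\epsilon$), the transition maps $h_n:=\epsilon\cdot(\_)\colon k_n\to k_{n+1}$ given by multiplication by $\epsilon$, and $\ast$ the multiplication inherited from the graded ring $k_*=\bigoplus_{n\ge0}k_n$.

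Most clauses are then immediate. Clause (i), $R_0\cong\mathbb F_2$, and clause (ii), that each $k_n$ is a group of exponent $2$ with distinguished element $\top_n$, hold by construction, since $k_n=K_n/2K_n$ forces $x+x=0$ for every $x$. Clause (iii) is the one-line computation $h_n(\top_n)=\epsilon\cdot\epsilon^{\,n}=\epsilon^{\,n+1}=\top_{n+1}$, and $h_n$ is a homomorphism because it is multiplication by a fixed ring element; clause (iv) is a tautology, as $h_n$ was defined to be $\ast_{1n}(\top_1,\_)$. For clause (vi) I would note that, because every $h_j$ is multiplication by $\epsilon$, the iterate $h^t_s$ is multiplication by $\epsilon^{\,t-s}$; hence, for $x\in k_n$ and $y\in k_m$, both sides of the compatibility identity collapse to $\epsilon^{\,(p-n)+(q-m)}(x\ast y)$, using only associativity and commutativity to gather the powers of $\epsilon$.

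The crux is clause (v): that $k_*=\bigoplus_{n\ge0}k_n$ is a \emph{commutative} graded ring. Associativity and $\mathbb F_2$-bilinearity are automatic, being inherited from the tensor-algebra quotient, so the whole weight falls on graded commutativity. For this I would invoke the permutation symmetry of the reduced product already recorded in each setting (Proposition~\ref{2.1kt}(c) for (b), Lemma~\ref{2.1ktmulti}(c) for (c), and the classical symmetry for (a)); equivalently, one reduces the graded-anticommutativity of the \emph{unreduced} theory, $\xi\eta=(-1)^{mn}\eta\xi$ of Lemma~\ref{bp1}(e), modulo $2$, where the sign vanishes. This is precisely the step that consumes hyperbolicity in case (c): Lemma~\ref{bp1}(b),(e) and Lemma~\ref{2.1ktmulti}(c) rest on $1-1=F$, so the argument runs in the hyperbolic setting in which the basic properties of Section~3 were proved — the intended scope of (c). With commutativity established all six clauses hold, so $k_*$ is an inductive graded ring in each case; parts (a) and (b) are then recovered as the instances underlying \cite{dickmann2000special} and \cite{dickmann2006algebraic}.
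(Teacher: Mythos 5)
Your proposal is correct and follows the same core strategy as the paper's proof: both equip $k_*$ with the distinguished elements $\top_n=\epsilon^{\,n}$ (the class of $(-1)^n$) and transition maps $h_n$ given by multiplication by $\epsilon$, and then check the clauses of Definition~\ref{igr1}, with the compatibility clause collapsing to bookkeeping of powers of $\epsilon$. The differences are in execution, and they mostly favor your version. First, the paper proves only item (c) in detail, quoting Lemmas 9.11 and 9.12 of \cite{dickmann2000special} for (a) and (b), whereas you run one uniform argument; both are legitimate. Second, the paper builds $h_n$ by hand --- a multilinear map $\theta_n$ on $\prod^n_{j=1}K_1F$, the universal property of the tensor product, then a descent to the quotient $k_n=K_nF/2K_nF$ --- while you simply observe that multiplication by a fixed degree-one element of the graded ring $k_*$ is automatically an additive homomorphism; this shortcut is sound and eliminates the paper's most laborious step. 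Third, and most substantively, you explicitly verify clause (v), graded commutativity, via the permutation symmetry of Proposition~\ref{2.1kt}(c) and Lemma~\ref{2.1ktmulti}(c); the paper's proof asserts that (i)--(iv) hold and then checks only the clause-(vi) identity, so commutativity is never addressed there, and your treatment fills that gap. Finally, you correctly flag that this commutativity step is exactly where hyperbolicity enters in case (c), since Lemma~\ref{2.1ktmulti} (and Lemma~\ref{bp1}, on which it rests) is proved for hyperbolic hyperfields --- a hypothesis the bare statement ``let $F$ be a hyperfield'' omits; reading (c) as scoped to hyperbolic hyperfields, as you do, is the right repair.
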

\begin{proof}
Item (a) is the content of Lemma 9.11 in \cite{dickmann2000special}, and item (b) is the content of Lemma 9.12 in \cite{dickmann2000special}. We prove item (c) and itens (a) and (b) will proceed by the same argument.

Let $k^{mult}_*F=(k_0F,k_1F,...,k_nF,...)$ be the reduced K-theory of a hyperfield $F$. Let $\top_0=1$ and for each $n\ge1$, we set $\top_n=l(-1)^n$ as the distinguished element of $m-n$. For each $n\ge0$, let
$\theta_n:\prod^n_{j=1}K^{mult}_1F\rightarrow\otimes^{n+1}_{j=1}K^{mult}_{n+1}F$ given by the rule
$$\theta_n(\rho(a_1),...,\rho(a_n)):=\rho(-1)\rho(a_1)...\rho(a_n).$$
We have for each $i\in\{1,...,n\}$ and each $a_1,...,a_n,b_i\in F^*$ that
\begin{align*}
    &\theta_n(\rho(a_1),...\rho(a_i)+\rho(b_i),...,\rho(a_n))=
    \theta_n(\rho(a_1),...\rho(a_ib_i),...,\rho(a_n)):= \\
    &\rho(-1)\rho(a_1)...\rho(a_ib_i)...\rho(a_n)
    =\rho(-1)\rho(a_1)...[\rho(a_i)+\rho(b_i)]...\rho(a_n)= \\
    &\rho(-1)\rho(a_1)...\rho(a_i)...\rho(a_n)+
    \rho(-1)\rho(a_1)...\rho(b_i)...\rho(a_n)= \\
    &\theta_n(\rho(a_1),...,\rho(a_i),...,\rho(a_n))+
    \theta_n(\rho(a_1)...\rho(b_i)...\rho(a_n)),
\end{align*}
then $\theta_n$ is multilinear. By the universal property of tensor product, we have a group homomorphism $\tilde\theta_n:K^{mult}_nF\rightarrow K^{mult}_{n+1}F$ given by the rule\footnote{Remember that we are using the simplificated notation for elements in $K^{mult}_nF$ (and all other K-theories), which is
$\rho(a_1)...\rho(a_n):=\rho(a_1)\otimes...\otimes\rho(a_n)$.}
$$\tilde\theta_n(\rho(a_1)...\rho(a_n))=\rho(-1)\rho(a_1)...\rho(a_n).$$
In order to make distinctions between reduced and non-reduced K-theories, we pontually denote an element in $k^{mult}_nF:=K^{mult}_nF/2K^{mult}_nF$ by $\tilde\rho(a_1)...\tilde\rho(a_n)$. Lets also denote the canonical projection by $\pi_n:K^{mult}_nF\rightarrow k^{mult}_nF$. We define $\omega_n:k^{mult}_nF\rightarrow k^{mult}_{n+1}F$ by the following rule (on generators): for $a_1,...,a_n\in\dot F$,
    $$\omega_n(\tilde\rho(a_1)...\tilde\rho(a_n))=\tilde\rho(-1)\tilde\rho(a_1)...\tilde\rho(a_n).$$
In fact, if $\rho(a_1)...\rho(b_n)-\rho(b_1)...\rho(b_n)\in 2K^{mult}_nF$ then
\begin{align*}
    \rho(-1)\rho(a_1)...\rho(b_n)-\rho(-1)\rho(b_1)...\rho(b_n)
    =\rho(-1)[\rho(a_1)...\rho(b_n)-\rho(b_1)...\rho(b_n)]\in 2K^{mult}_{n+1}F,
\end{align*}
which proves that $\omega_n$ is in fact a group homomorphism making the following diagram commute
$$\xymatrix@!=4.pc{K^{mult}_nF\ar[r]^{\tilde\theta_n}\ar[d]_{\pi_n} & K^{mult}_{n+1}F\ar[d]^{\pi_{n+1}} \\
k^{mult}_nF\ar[r]_{\omega_n} & k^{mult}_{n+1}F}$$
With these rules, we already have the properties (i)-(iv) of Definition \ref{igr1} holding in $k_*F$, remaining only property (v). Note that $\omega^t_s=\tilde\rho(-1)^{t-s}$ for $0\le s<t$.

    Now let $m,n,p,q\in\mathbb N$, $p\ge n$, $q\ge m$ and consider $x\in k^{mult}_nF$ and $y\in k^{mult}_mF$. Note that $\omega^p_n(x)=\tilde\rho(-1)^{p-n}\cdot x$ and $\omega^q_m(y)=\tilde\rho(-1)^{q-m}\cdot y$. Then
    $$\omega^p_n(x)\cdot\omega^q_m(y)=(\tilde\rho(-1)^{p-n}\cdot x)(\tilde\rho(-1)^{q-m}\cdot y)=\tilde\rho(-1)^{p-n+q-m}\cdot (x\cdot y)=\omega^{p+q}_{n-m}(x),$$
    completing the proof.
\end{proof}

Using this Theorem (in addition with the argument of Lemma 3.3 in \cite{dickmann2006algebraic}) we obtain the following.

  \begin{cor}\label{km2}
   Let $F$ be a field. We have a functor and $k:Field_2\rightarrow \mbox{Igr}$ induced by K-theory and Milnor's reduced K-theory.
  \end{cor}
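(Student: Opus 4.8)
The plan is to define $k$ on objects by $k(F):=k^{mil}_*F$ and on morphisms by the map induced on reduced K-theory, and then to check that both assignments land in $\mbox{Igr}$ and are functorial. Note first that one cannot use the full Milnor K-theory here, since $K^{mil}_0F=\mathbb Z\not\cong\mathbb F_2$ violates condition (i) of Definition \ref{igr1}; this is precisely why the codomain-$\mbox{Igr}$ functor must be induced by the \emph{reduced} theory $k_*F=K_*F/2K_*F$.

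On objects, for a field $F$ of characteristic $\ne 2$ (viewed as a hyperfield via Example \ref{ex:1.3}(a)), Theorem \ref{km1}(a) says that $k^{mil}_*F$, equipped with distinguished elements $\top_n=\rho(-1)^n$ and structure maps $h_n=\omega_n$ (multiplication by $\top_1=\rho(-1)$), is an inductive graded ring, so $k(F)\in\mbox{Obj}(\mbox{Igr})$. On morphisms, a field homomorphism $f:F\to L$ is in particular a morphism of (hyper)fields with $f(1)=1$, hence $f(-1)=-1$. Applying Proposition \ref{3.3ktmultiadap} in its reduced-K-theory form (valid by the remark following Lemma \ref{2.1ktmulti}) yields a morphism of graded rings $f_*=\{f_n\}_{n\ge 0}:k_*F\to k_*L$ with $f_0=Id_{\mathbb F_2}$ and $f_n(\rho(a_1)\cdots\rho(a_n))=\rho(f(a_1))\cdots\rho(f(a_n))$.

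It remains to verify that $f_*$ is a morphism in $\mbox{Igr}$, i.e. that it satisfies the two conditions of Definition \ref{igr1}: each $f_n$ is a morphism of pointed exponent-$2$ groups and $\bigoplus_n f_n$ is a morphism of commutative graded rings. The latter is exactly the content of Proposition \ref{3.3ktmultiadap}, while for the former each $f_n$ is a group homomorphism by the same Proposition, and it preserves the distinguished element because $f(-1)=-1$ forces $f_n(\top_n)=f_n(\rho(-1)^n)=\rho(-1)^n=\top_n$. The compatibility $f_{n+1}\circ h_n=h_n\circ f_n$ is then automatic (as in the ``first consequence'' computation after Definition \ref{igr1}), and is in any case furnished directly by Proposition \ref{3.3ktmultiadap}(c), whose hypothesis $f(1)=1$ holds. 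Finally, functoriality $(g\circ f)_*=g_*\circ f_*$ and $(Id_F)_*=Id_{k_*F}$ is Proposition \ref{3.3ktmultiadap}(a). Assembling these pieces gives the desired functor $k:Field_2\to\mbox{Igr}$.

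The only genuinely new point beyond Theorem \ref{km1}(a) is the observation that $f_*$ respects the full $\mbox{Igr}$ datum rather than merely being a graded-ring morphism, but since the $\mbox{Igr}$ structure maps are themselves multiplication by $\top_1=\rho(-1)$ and $f(-1)=-1$, this compatibility is supplied verbatim by parts (a) and (c) of Proposition \ref{3.3ktmultiadap}; so the corollary reduces to bookkeeping on top of Theorem \ref{km1}(a), with no substantive obstacle.
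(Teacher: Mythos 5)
Your proof is correct and takes essentially the same route as the paper: the paper obtains this corollary precisely by combining Theorem \ref{km1}(a) with the functoriality argument of Proposition \ref{3.3ktmultiadap} (cited there as ``the argument of Lemma 3.3 of \cite{dickmann2006algebraic}''), which is exactly the bookkeeping you spell out. The one caveat --- that Proposition \ref{3.3ktmultiadap} is stated for hyperbolic hyperfields, whereas a field viewed as a hyperfield is not hyperbolic since $1-1=\{0\}$ --- is a looseness shared by the paper itself; the proof of that proposition nowhere uses hyperbolicity, so the argument transfers verbatim.
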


  \begin{cor}\label{km3}
   Let $F$ be a hyperfield. We have a functor $k^{mult}:MField_2\rightarrow \mbox{Igr}$ induced by our reduced K-theory.
  \end{cor}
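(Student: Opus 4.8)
The plan is to assemble the corollary from the two facts already established: that the object assignment $F \mapsto k^{mult}_*F$ lands in $\mbox{Igr}$, and that a hyperfield morphism induces a graded-ring morphism between the associated K-theories. Concretely, I would first recall that Theorem \ref{km1}(c) gives, for each hyperfield $F$, an inductive graded ring structure on $k^{mult}_*F = (k_0F, k_1F, \dots)$, with distinguished elements $\top_n = \rho(-1)^n$ and structure maps $h_n = \omega_n$ equal to multiplication by $\rho(-1)$. Thus the object part of the functor $k^{mult}$ is exactly Theorem \ref{km1}(c); what remains is the action on morphisms together with the verification of functoriality.

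For morphisms, given $\varphi : F \to L$ in $MField_2$, I would invoke the reduced version of Proposition \ref{3.3ktmultiadap} (which the text notes holds verbatim for $k_*$) to obtain $\varphi_* = \{\varphi_n\}_{n \ge 0} : k^{mult}_*F \to k^{mult}_*L$, a morphism of graded rings with $\varphi_0 = Id_{\mathbb{Z}}$ (hence $Id_{\mathbb{F}_2}$ after reduction) and $\varphi_n(\rho(a_1)\cdots\rho(a_n)) = \rho(\varphi(a_1))\cdots\rho(\varphi(a_n))$. The real content is to upgrade this graded-ring morphism to a morphism of inductive graded rings in the sense of Definition \ref{igr1}, i.e. to check the two extra compatibilities. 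First, each $\varphi_n$ must be a morphism of pointed groups, namely $\varphi_n(\top_n) = \top_n$; this follows because every hyperfield morphism satisfies $\varphi(1) = 1$, whence $\varphi(-1) = -1$ (as noted in the proof of Proposition \ref{3.3ktmultiadap}(c)), so $\varphi_n(\rho(-1)^n) = \rho(\varphi(-1))^n = \rho(-1)^n$. Second, $\varphi_*$ must commute with the structure maps $h_n = \omega_n$; this is precisely the commuting square of Proposition \ref{3.3ktmultiadap}(c), which applies since $\varphi(1) = 1$ holds for every morphism in $MField_2$.

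Finally, functoriality — $(\psi \circ \varphi)_* = \psi_* \circ \varphi_*$ and $(Id_F)_* = Id_{k^{mult}_*F}$ — is read off directly from Proposition \ref{3.3ktmultiadap}(a), and these identities automatically respect the Igr-morphism structure since they already hold componentwise. I expect no serious obstacle: the one point deserving care is verifying that $\varphi_*$ is an $\mbox{Igr}$-morphism rather than merely a graded-ring morphism, and both additional conditions (preservation of $\top_n$ and commutation with the $\omega_n$) reduce immediately to the facts $\varphi(-1) = -1$ and Proposition \ref{3.3ktmultiadap}(c). The whole argument is the exact hyperfield transcription of Lemma 3.3 of \cite{dickmann2006algebraic}, cited just before the statement.
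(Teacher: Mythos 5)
Your proposal is correct and follows essentially the same route as the paper: the paper obtains Corollary \ref{km3} precisely by combining Theorem \ref{km1}(c) (object part) with the argument of Proposition \ref{3.3ktmultiadap} — the hyperfield adaptation of Lemma 3.3 of \cite{dickmann2006algebraic} — for the morphism part, functoriality, and compatibility with the maps $\omega_n$. Your only addition is to spell out explicitly the verification that $\varphi_*$ preserves the distinguished elements $\top_n=\rho(-1)^n$ and commutes with the structure maps, which the paper leaves implicit in its one-line derivation.
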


  \begin{teo}[Theorem 2.5 in \cite{dickmann2003lam}]\label{km4}
   Let $F$ be a field. The functor $G:Field_2\rightarrow SG$ provides a functor $k^{dm}_*:Field_2\rightarrow \mbox{Igr}$ (the special group
K-theory functor) given on the objects by $k^{dm}_*(F):k^{dm}_*(G(F))$ and on the morphisms $f:F\rightarrow K$ by $k^{dm}_*(f):=G(f)_*$ (in the sense of Lemma 3.3 of \cite{dickmann2006algebraic}). Moreover, this functor commutes with the functors $G$ and $k$, i.e, for all $F\in Field$,
$k^{dm}_*(G(F))\cong k_*(F)$.
  \end{teo}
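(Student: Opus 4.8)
The plan is to treat the two assertions separately. Functoriality of $k^{dm}_*\colon Field_2\to\mbox{Igr}$ is formal: the special-group functor $G\colon Field_2\to SG$ is already available, and Lemma 3.3 of \cite{dickmann2006algebraic} together with Theorem \ref{km1}(b) promotes $k^{dm}_*$ to a functor $SG\to\mbox{Igr}$ (the assignment $\varphi\mapsto\varphi_*$ preserves identities and composites, exactly as in Proposition \ref{3.3ktmultiadap}). Since the composite of functors is a functor, $k^{dm}_*\circ G$ is well defined on objects and morphisms. All the real content lies in the ``moreover'' clause, namely the natural isomorphism $k^{dm}_*(G(F))\cong k_*(F)$, which I would construct degree by degree and then check for naturality.

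First I would record that both graded rings are quotients of the same object. For a field $F$ of characteristic $\ne 2$ one has $k_0=\mathbb F_2$ and, after reducing mod $2$, $k_1=\dot F/\dot F^2=G(F)$ in both theories; moreover each of $k_*(F)$ and $k^{dm}_*(G(F))$ is a quotient of the $\mathbb F_2$-tensor algebra on $\dot F/\dot F^2$ by a graded ideal generated in degree $2$ (the Steinberg relators $\rho(a)\rho(1-a)$ on the Milnor side, and the relators $\rho(a)\rho(ab)$ with $a\in D_{G(F)}(1,b)$ on the Dickmann--Miraglia side, by Definition \ref{defn:ksg}). Since both ideals are generated in degree $2$, to identify the two quotients it suffices to prove that these two families of relators generate the same subgroup $I$ of $\dot F/\dot F^2\otimes\dot F/\dot F^2$.

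The heart of the argument is the two resulting inclusions. For ``Steinberg $\subseteq$ DM'' I would set $b:=a(1-a)$ for $a\ne 0,1$; the identity $a=1\cdot a^2+b\cdot 1^2$ shows $a\in D_{G(F)}(1,b)$, and since $a^2\equiv 1\pmod{\dot F^2}$ one gets $ab\equiv 1-a$, whence $\rho(a)\rho(1-a)=\rho(a)\rho(ab)$ is a DM relator. For the reverse inclusion I would start from $a\in D_{G(F)}(1,b)$, i.e. $a=x^2+by^2$; the degenerate cases $y=0$ or $x=0$ force $\rho(a)=0$ or $\rho(ab)=0$, and otherwise rescaling by $x^2$ gives $a'=1+b'$ with $a'\equiv a$ and $b'\equiv b$, so that $-b'=1-a'$ and the Steinberg relation yields $\rho(a)\rho(-b)=\rho(a')\rho(1-a')\in I_{mil}$, the Milnor ideal. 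Combining this with the degree-$1$ identity $\rho(ab)=\rho(-a)+\rho(-b)$ (valid since $(-a)(-b)=ab$), I obtain
$$\rho(a)\rho(ab)=\rho(a)\rho(-a)+\rho(a)\rho(-b),$$
and since $\rho(a)\rho(-a)$ also lies in the Milnor ideal (the classical relation $\{a,-a\}=0$, itself a formal consequence of Steinberg), the DM relator $\rho(a)\rho(ab)$ lies in $I_{mil}$. Hence the two ideals coincide and the induced graded map $\Phi_F\colon k_*(F)\to k^{dm}_*(G(F))$, which is the identity on $\dot F/\dot F^2$, is an isomorphism of inductive graded rings.

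Finally I would verify naturality: for a morphism $f\colon F\to K$ both $k_*(f)$ and $G(f)_*=k^{dm}_*(G(f))$ send the degree-$1$ generator $\rho(a)$ to $\rho(f(a))$, and a morphism of graded rings is determined by its action in degree $1$, so the square relating them through the $\Phi$'s commutes. I expect the main obstacle to be precisely the reverse inclusion ``DM $\subseteq$ Steinberg'': it is the only step that genuinely uses the field structure (the rescaling $a=x^2+by^2\rightsquigarrow a'=1+b'$) rather than purely formal manipulation, and one must be careful that the identity $\rho(a)\rho(ab)=\rho(a)\rho(-a)+\rho(a)\rho(-b)$ is an equality of relators in $T^2$, so that what is established is ideal membership rather than mere vanishing in $k_2$.
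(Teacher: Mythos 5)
Your proof is correct, but note that the paper never proves this statement itself: Theorem \ref{km4} is quoted from Theorem 2.5 of \cite{dickmann2003lam}, and the only related argument in the paper is the proof of the companion result, Theorem \ref{km5}, which explicitly ``follows the argument of Theorem 2.5 in \cite{dickmann2003lam}''. That argument shares your outline --- both $k_*(F)$ and $k^{dm}_*(G(F))$ are quotients of the tensor algebra $T_{\mathbb F_2}(\dot F/\dot F^2)$ by ideals generated in degree $2$, so it suffices to absorb each family of degree-$2$ relators into the other --- but the execution differs. The paper (following Dickmann--Miraglia) proves vanishing statements \emph{inside} the reduced K-theories, using identities such as $\lambda(b)^2=\lambda(b)\lambda(-1)$ and $\lambda(b)\lambda(-a)=0$ for $b\in D_G(1,a)$; you instead prove equality of the relator subgroups in $T^2$, and only your reverse inclusion (``DM $\subseteq$ Steinberg'') invokes genuine field arithmetic. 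Both of your inclusions check out: $b:=a(1-a)$ gives $a=a^2+b$, hence $a\in D_{G(F)}(1,b)$ with $ab\equiv 1-a$ modulo squares; and in the other direction the rescaling of $a=x^2+by^2$ to $a'=1+b'$ (with the cases $x=0$, $y=0$ killing a tensor factor), together with the classical Steinberg consequence that $\rho(a)\rho(-a)$ lies in the span of Steinberg relators, is valid. The two formulations are equivalent in the end, since an ideal generated in degree $2$ has degree-$2$ component equal to the span of its generators, so ``vanishing in $k_2$'' and ``membership in the relator subgroup'' coincide here; your caution about this point is well placed but not load-bearing. One streamlining worth recording: from $a=x^2+by^2$ with $x,y\ne 0$ one has $1=\frac{x^2}{a}+\frac{by^2}{a}$, and the Steinberg relator $\rho\left(\frac{x^2}{a}\right)\rho\left(\frac{by^2}{a}\right)$ is, modulo squares, exactly $\rho(a)\rho(ab)$; thus each DM relator \emph{is} a Steinberg relator, mirroring your first inclusion and making the decomposition through $\rho(a)\rho(-a)$ unnecessary. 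Your naturality and Igr-structure remarks (the isomorphism is the identity in degree $1$, both sides are generated in degree $1$, and $\top_1$ is the class of $-1$ on both sides) correctly finish the proof.
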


  \begin{teo}\label{km5}
   Let $G$ be a special group. The equivalence of categories $M:SG\rightarrow SMF$ induces a functor $k^{mult}_*:SG\rightarrow
\mbox{Igr}$ given on the objects by $k^{mult}_*(G):=k^{mult}_*(M(G))$ and on the morphisms $f:G\rightarrow H$ by
$k^{mult}_*(f):=k^{mult}_*(M(f))$. Moreover, this functor commutes with $M$ and $k^{dm}$, i.e, for all $G\in SG$, $k^{mult}_*(M(G))\cong k^{dm}_*(G)$.
  \end{teo}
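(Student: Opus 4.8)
The plan is to realize $k^{mult}_*$ on $\mathcal{SG}$ as the composite $k^{mult}_*\circ M$ and then to produce a degreewise isomorphism onto $k^{dm}_*$ that is natural in $G$. For the functor itself I would first check that $M$ lands in the domain of Corollary \ref{km3}: by Proposition \ref{sg.to.mf}, in $M(G)$ we have $-(1)=-1$ and $1\neq 0$, so $1-1=1+(-1)=M(G)$ by the third branch of the sum, and $M(G)$ is hyperbolic; since $G$ has exponent $2$ and $M(G)$ is special (Definition \ref{smf}), $M(G)$ is in fact a pre-special hyperfield, so $k^{mult}_*(M(G))$ is a genuine inductive graded ring by Theorem \ref{km1}(c). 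Composition of functors then gives $k^{mult}_*:=k^{mult}_*\circ M\colon\mathcal{SG}\to\mathrm{Igr}$ with the stated action on objects and morphisms.

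Next I would set up the comparison. The nonzero elements of $M(G)$ are exactly $G$, so $k^{mult}_1(M(G))=G=k^{dm}_1 G$ (both written additively through the logarithms $\rho$ and $\lambda$), and $k_0$ is $\mathbb{F}_2$ on both sides. Consequently both $k^{dm}_*G$ and $k^{mult}_*(M(G))$ are quotients of the same $\mathbb{F}_2$-tensor algebra $T^*(G)$ by homogeneous ideals, say $Q_{dm}$ and $Q_{mult}$. I would define $\Phi=\bigoplus_n\Phi_n$ by $\lambda(a_1)\cdots\lambda(a_n)\mapsto\rho(a_1)\cdots\rho(a_n)$ (the identity on $T^*(G)$) and reduce the whole theorem to the identity of ideals $Q_{dm}=Q_{mult}$.

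The heart of the matter is matching the relations. In degree $2$, the multigroup axiom (Definition \ref{defn:multigroupI}(i)) gives, for nonzero $a,b$, the equivalences $b\in 1-a\Leftrightarrow 1\in a+b\Leftrightarrow a\in 1-b$; and by the sum formula of Proposition \ref{sg.to.mf} (with the degenerate case $a=-b$ covered by SG2) one has $1\in a+b$ iff $1\in D_G(a,b)$, so the hyperfield generator $\rho(a)\rho(b)$ is recorded exactly when $1\in D_G(a,b)$. On the Dickmann--Miraglia side the generator $\lambda(a)\lambda(ab)$ with $a\in D_G(1,b)$ has slot product $b$, and SG3 forces the partner entry, giving $a\in D_G(1,ab\cdot a)=D_G(1,b)\Leftrightarrow 1\in D_G(a,ab)$; thus both degree-$2$ relations are governed by the single condition $1\in D_G(x,y)$, and $Q^2_{dm}=Q^2_{mult}$. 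Since the generators of $Q_{dm}$ carry their relation pair in consecutive slots, $Q_{dm}$ is the two-sided ideal generated in degree $2$, so $Q_{dm}\subseteq Q_{mult}$ is immediate.

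The one genuine obstacle is the reverse inclusion: a generator of $Q^n_{mult}$ may carry its relation pair $(a_i,a_j)$ in arbitrary, non-consecutive positions. To close this I would work inside the quotient $k^{dm}_nG$ and use permutation invariance (Proposition \ref{2.1kt}(c)) to move $a_i,a_j$ into adjacent slots; there the product equals a degree-$2$ relation times the remaining factors, hence lies in $Q^n_{dm}$, which forces the original generator into $Q^n_{dm}$ too. This yields $Q_{mult}\subseteq Q_{dm}$, hence $Q_{dm}=Q_{mult}$ and each $\Phi_n$ is an isomorphism. Finally $\Phi$ is multiplicative (being the identity on $T^*(G)$), sends $\top_n=\lambda(-1)^n$ to $\rho(-1)^n$ and intertwines the structure maps $h_n$ (multiplication by $\lambda(-1)$, resp.\ $\rho(-1)$), so it is an isomorphism of inductive graded rings; and because both $k^{dm}_*(f)$ and $k^{mult}_*(M(f))$ act as $a\mapsto f(a)$ on $k_1=G$ while $\Phi$ is the identity there, naturality in $G$ is automatic, giving $k^{mult}_*\circ M\cong k^{dm}_*$.
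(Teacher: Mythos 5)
Your proposal is correct, and it reaches the crucial isomorphism $k^{mult}_*(M(G))\cong k^{dm}_*(G)$ by a genuinely different route from the paper. The paper reduces everything (invoking the argument of Theorem 2.5 of \cite{dickmann2003lam}) to two degree-two implications --- each family of defining relations vanishes in the other quotient --- and verifies them by computing with the identities $\lambda(b)^2=\lambda(b)\lambda(-1)$ and $\rho(b)^2=\rho(b)\rho(-1)$ of Proposition \ref{2.1kt} and Lemma \ref{bp1}; the reduction itself, including the fact that a hyperfield generator may carry its pair $a_i\in 1-a_j$ in non-adjacent slots, stays inside the citation. You instead prove the sharper fact that the two degree-two generating subgroups of $T^2(G)$ coincide exactly, both being indexed by the symmetric condition $1\in D_G(x,y)$: on the hyperfield side via the multigroup inversion axiom and the sum formula of Proposition \ref{sg.to.mf}, on the special-group side via SG0 and SG3; then you do the ideal bookkeeping explicitly, getting $Q_{dm}\subseteq Q_{mult}$ because adjacent slots are a special case, and $Q_{mult}\subseteq Q_{dm}$ by permuting the relation pair into adjacent slots inside $k^{dm}_nG$ (Proposition \ref{2.1kt}(c)). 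Besides being self-contained, your route is safer: the paper's auxiliary statement (ii), read literally (``$b\in D_G(1,a)$ implies $\rho(b)\rho(a)=0$''), is false --- for the reduced special group $G=\{\pm 1\}$ one has $-1\in D_G(1,-1)$ yet $\rho(-1)\rho(-1)=\rho(-1)^2\neq 0$ in $k_2^{mult}(Q_2)$ --- what is actually needed (and what the paper's computation is aiming at) is $\rho(b)\rho(ab)=0$, which your identification of generators delivers at once since $b\in D_G(1,a)$ iff $1\in D_G(b,ab)$. You also record the Igr-compatibility ($\top_n$ and the maps $h_n$) and naturality in $G$, which the paper treats as immediate; what the paper's approach buys in exchange is brevity.
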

  \begin{proof}
   The only part requiring proof is that for all $G\in SG$, $k^{mult}_*(M(G))\cong k^{dm}_*(G)$. The very first observation is that: since $G$ is an exponent 2 group, the reduced and non-reduced $K^{mult}$-theory of $M(G)$ coincide.

   Following the argument of Theorem 2.5 in \cite{dickmann2003lam}, it is enough to show the following two statements:
   \begin{enumerate}[i -]
       \item For all $a,b\in G$, if $b\in1-a$ in $M(G)$ then $\lambda(b)\lambda(a)=0$;
       \item For all $a,b\in G$, if $b\in D_G(1,a)$ then $\rho(b)\rho(a)=0$.
   \end{enumerate}

   For (i), if $b\in1-a$ in $M(G)$ then $b\in D_G(1,-a)$ and then, $\lambda(b)\lambda(-a)=0$. Hence
   $$\lambda(b)^2=\lambda(b)\lambda(-a)=\lambda(b)\lambda(a)+\lambda(b)\lambda(-1).$$
   Since $\lambda(b)\lambda(-1)=\lambda(b)^2$, we get $\lambda(b)\lambda(a)=0$.

   For (ii) we just use the same argument: if $b\in D_G(1,a)$ then $b\in1+a$ in $M(G)$ and then, $\rho(b)\rho(-a)=0$. Hence
   $$\rho(b)^2=\rho(b)\rho(-a)=\rho(b)\rho(a)+\rho(b)\rho(-1).$$
   Since $\rho(b)\rho(-1)=\rho(b)^2$, we get $\rho(b)\rho(a)=0$.
  \end{proof}

  Combining Theorems \ref{km1}, \ref{km4}, \ref{km5}, \ref{ktmarshall2} and Corollaries \ref{km2} and \ref{km3} we obtain the following Theorem, that unificate in some sense all three K-theories:

  \begin{teo}[Interchanging K-theories Formulas]\label{res0}
   Let $F\in Field_2$. Then
   $$k^{mil}(F)\cong k^{dm}(G(F))\cong k^{mult}(M(G(F))).$$
   If $F$ is formally real and $T$ is a preordering of $F$, then
   $$k^{dm}(G_T(F))\cong k^{mult}(M(G_T(F))).$$
   Moreover, since $M(G(F))\cong F/_m\dot F^2$ and $M(G_T(F))\cong F/_mT^*$, we get
   \begin{align*}
       k^{mil}(F)&\cong k^{dm}(G(F))\cong k^{mult}(F/_m\dot F^2)\mbox{ and } \\
       k^{dm}(G_T(F))&\cong k^{mult}(F/_mT^*).
   \end{align*}
  \end{teo}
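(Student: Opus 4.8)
The plan is to prove the statement purely by concatenating isomorphisms already supplied by the previously established results; no fresh computation with generators $\rho(a)$ is needed. First I would record the two object-level identifications at the level of hyperfields: $M(G(F))\cong F/_m\dot F^2$ and $M(G_T(F))\cong F/_m T^*$. These are the translations, into the present multiring language, of the classical fact that the reduced special group of a field (resp. of a preordering $T$) is realized by the Marshall quotient of $F$ by $\dot F^2$ (resp. by $T^*$); I would cite \cite{ribeiro2016functorial} and \cite{roberto2021quadratic} for the needed hyperfield isomorphisms. The only point to check is that the multivalued sum of $M(G(F))$, which on non-antipodal pairs is $a+b=D_{G(F)}(a,b)$ by Proposition \ref{sg.to.mf}, coincides with the Marshall-quotient sum on $F/_m\dot F^2$; this is exactly what the correspondence $M\colon\mathcal{SG}\to MField$ of Corollary \ref{cor:equiv1} encodes.

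With these identifications in hand, I would chain as follows. By Theorem \ref{km4}, $k^{mil}(F)\cong k^{dm}(G(F))$ as inductive graded rings. By Theorem \ref{km5} applied to $G=G(F)$, one has $k^{dm}(G(F))\cong k^{mult}(M(G(F)))$, which already yields the first displayed line $k^{mil}(F)\cong k^{dm}(G(F))\cong k^{mult}(M(G(F)))$. In the formally real case I would instead invoke Theorem \ref{km5} with $G=G_T(F)$, the reduced special group attached to the preordering $T$, giving $k^{dm}(G_T(F))\cong k^{mult}(M(G_T(F)))$.

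To replace $M(G(F))$ and $M(G_T(F))$ by the Marshall quotients $F/_m\dot F^2$ and $F/_m T^*$ inside the arguments of $k^{mult}$, I would use functoriality. By Corollary \ref{km3}, $k^{mult}$ is a functor $MField_2\to\mathrm{Igr}$, and functors carry isomorphisms to isomorphisms, so the hyperfield isomorphisms of the first paragraph induce $\mathrm{Igr}$-isomorphisms $k^{mult}(M(G(F)))\cong k^{mult}(F/_m\dot F^2)$ and $k^{mult}(M(G_T(F)))\cong k^{mult}(F/_m T^*)$. Composing with the previous chain produces the two ``moreover'' formulas. Finally I would note that Proposition \ref{ktmarshall1} and Theorem \ref{ktmarshall2} ensure that on these quotients, where every nonzero square is trivial, the reduced and unreduced theories agree, $K^{mult}(F/_m\dot F^2)\cong k^{mult}(F/_m\dot F^2)$, so the formulas may be read indifferently with $K$ or $k$.

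The main obstacle, and the only place demanding real care, is \emph{coherence}: each arrow in the chain must be an isomorphism of inductive graded rings, respecting the grading, the distinguished elements $\top_n$, and the maps $h_n$, rather than merely an abstract group isomorphism in each degree. I expect this to go through because every link in the chain is induced by a morphism of the underlying structures (a field homomorphism, an $\mathcal{SG}$-isomorphism, the equivalence $M$, or a hyperfield isomorphism), and the assignments $k^{mil}$, $k^{dm}$, $k^{mult}$ are functorial into $\mathrm{Igr}$ by Theorems \ref{km1}, \ref{km4}, \ref{km5} and Corollary \ref{km3}; hence each composite is an $\mathrm{Igr}$-isomorphism. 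The single verification worth spelling out is that $M(G(F))\cong F/_m\dot F^2$ is genuinely a hyperfield isomorphism and not just a group bijection, since the two multivalued sums are defined by different recipes ($D_{G}$ versus the Marshall-quotient sum); granting the cited correspondence, this is immediate.
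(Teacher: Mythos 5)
Your proposal is correct and follows essentially the same route as the paper: the paper's own ``proof'' is precisely the combination of Theorems \ref{km1}, \ref{km4}, \ref{km5}, \ref{ktmarshall2} and Corollaries \ref{km2}, \ref{km3}, i.e., chaining $k^{mil}(F)\cong k^{dm}(G(F))$ (Theorem \ref{km4}) with $k^{dm}(G)\cong k^{mult}(M(G))$ (Theorem \ref{km5}, applied to $G(F)$ and $G_T(F)$) and transporting the hyperfield identifications $M(G(F))\cong F/_m\dot F^2$, $M(G_T(F))\cong F/_mT^*$ through the functor $k^{mult}$. Your extra attention to coherence (that each link is an $\mathrm{Igr}$-isomorphism, not merely a levelwise group isomorphism) and to the reduced/unreduced agreement via Proposition \ref{ktmarshall1} and Theorem \ref{ktmarshall2} is exactly the content the paper leaves implicit.
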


  \begin{cor}\label{res1}
  Let $F$ be a field. Then
  $$k^{mil}(F)\cong k^{mult}(F/_m\dot F^2).$$
  \end{cor}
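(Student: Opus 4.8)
The plan is to read the statement off directly from Theorem \ref{res0}, since it is precisely the composition of the first and last isomorphisms in the displayed chain established there. First I would invoke Theorem \ref{km4}: for a field $F$ (of characteristic not $2$, i.e. $F\in Field_2$), the reduced Milnor K-theory $k^{mil}(F)$ is isomorphic, as an inductive graded ring, to the Dickmann--Miraglia K-theory $k^{dm}(G(F))$ of the special group $G(F)$ associated to $F$. Next I would apply Theorem \ref{km5}, which supplies an isomorphism $k^{dm}(G(F)) \cong k^{mult}(M(G(F)))$ between the special group K-theory of $G(F)$ and the hyperfield K-theory of the special multifield $M(G(F))$.

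The last ingredient is the identification $M(G(F)) \cong F/_m\dot F^2$ already recorded in the statement of Theorem \ref{res0}: the composite functor $M \circ G$ sends $F$ to the Marshall quotient of $F$ by its nonzero squares. Feeding this isomorphism of hyperfields through the functoriality of $k^{mult}$ (Corollary \ref{km3}) yields $k^{mult}(M(G(F))) \cong k^{mult}(F/_m\dot F^2)$. Composing the three isomorphisms
$$k^{mil}(F) \cong k^{dm}(G(F)) \cong k^{mult}(M(G(F))) \cong k^{mult}(F/_m\dot F^2)$$
gives the desired conclusion.

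There is no genuine obstacle here: all the substantive work was carried out in establishing Theorems \ref{km1}, \ref{km4} and \ref{km5} (together with the Marshall-quotient computation $M(G(F)) \cong F/_m\dot F^2$), and the corollary merely extracts the two outermost terms of that chain. The only point worth checking is that each arrow is an isomorphism of \emph{inductive graded rings} rather than merely of graded groups, so that the composite respects the full Igr-structure; but this is immediate, since every isomorphism invoked above is already of that kind by its construction.
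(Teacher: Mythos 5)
Your proposal is correct and follows essentially the same route as the paper: the paper's proof likewise composes the chain $k^{mil}(F)\cong k^{dm}(G(F))\cong k^{mult}(M(G(F)))$ (obtained from Theorems \ref{km4} and \ref{km5}, packaged in Theorem \ref{res0}) and then applies the identification $M(G(F))\cong F/_m\dot F^2$. Your version is merely slightly more explicit in unpacking Theorem \ref{res0} into its constituent theorems and in noting the functoriality step from Corollary \ref{km3}.
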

  \begin{proof}
  Using the previous Corollary, we already have
  $$k^{mil}(F)\cong k^{dm}(G(F))\cong k^{mult}(M(G(F))).$$
  Now, is enough to observe that $M(G(F))\cong F/_m\dot F^2$.
  \end{proof}

  Combining Theorem \ref{res0}, Corollary \ref{res1} and Theorem \ref{ktmarshall2} we get the following Corollaries.

  \begin{cor}\label{res2}
   Let $F$ be a formally real field and $T$ be a preordering. Then we have a surjective map
   $$k^{mil}(F)\rightarrow k^{mult}(F/_m T^*).$$
  \end{cor}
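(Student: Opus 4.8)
The plan is to transport the Milnor side to the multifield side via the interchanging formulas and then apply Theorem \ref{ktmarshall2}. First I would use Corollary \ref{res1} (equivalently, the first line of Theorem \ref{res0}) to rewrite the source: since $F$ is a field, $k^{mil}(F)\cong k^{mult}(F/_m\dot F^2)$. The key structural remark is that $H:=F/_m\dot F^2\cong M(G(F))$ is a special hyperfield, hence in particular hyperbolic (it satisfies DM0) and of exponent two (every $a\in\dot H$ has $a^2=1$). Thus $H$ is a legitimate input for Theorem \ref{ktmarshall2}.

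Next I would apply Theorem \ref{ktmarshall2} to $H$ together with the multiplicative subset $\bar T\subseteq H$ obtained as the image of $T^*$ under the canonical projection $F\to F/_m\dot F^2$. Its hypothesis $\dot H^2\subseteq\bar T$ is automatic here, since $\dot H^2=\{1\}$ by exponent two, so it is contained in any multiplicative subset. The theorem then delivers a surjective morphism $k^{mult}(H)\to k^{mult}(H/_m\bar T)$.

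It remains to identify the target hyperfield. Here I would invoke the transitivity of Marshall's localization: because $T$ is a preordering we have $\dot F^2\subseteq T^*$, and iterating the two quotients yields $(F/_m\dot F^2)/_m\bar T\cong F/_mT^*$. Composing the isomorphism of the first step, the surjection of the second, and this last identification produces the desired surjective map $k^{mil}(F)\to k^{mult}(F/_mT^*)$.

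The one genuinely technical point I expect to be the main obstacle is the transitivity identification $(F/_m\dot F^2)/_m\bar T\cong F/_mT^*$, i.e.\ that the two-step Marshall quotient agrees with the direct one; everything else is a direct substitution into results already in hand. As a sanity check (and an alternative route that sidesteps Theorem \ref{ktmarshall2} altogether), one may instead apply the reduced version of Proposition \ref{3.3ktmultiadap} to the surjective hyperfield projection $F/_m\dot F^2\to F/_mT^*$, both being special, hence hyperbolic, hyperfields; since a surjection induces a surjection on K-theory, this gives the same surjective morphism after precomposing with the isomorphism of Corollary \ref{res1}.
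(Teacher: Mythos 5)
Your proposal is correct and follows essentially the same route as the paper: the corollary is obtained there precisely by combining Corollary \ref{res1} (the first line of Theorem \ref{res0}) with Theorem \ref{ktmarshall2} applied to the hyperbolic hyperfield $F/_m\dot F^2$, exactly as you do. Your explicit treatment of the transitivity identification $(F/_m\dot F^2)/_m\bar T\cong F/_mT^*$ just fills in a detail the paper leaves implicit (it is also subsumed by the identification $M(G_T(F))\cong F/_mT^*$ stated in Theorem \ref{res0}).
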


  \begin{cor}\label{res3}
  Let $G$ be a pre-special group and $H\subseteq G$ be a subgroup of $G$. Let $M(G)$ be the pre-special multifield associated to $G$ and $M(H)=H\cup\{0\}\subseteq M(G)$. Then
  $$G/H\cong M(G)/_mM(H)^*.$$
  Moreover, $M(H)\subseteq M(G)$ is a preordering if and only if $H$ is saturated.
  \end{cor}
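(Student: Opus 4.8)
The plan is to realize the Marshall quotient $M(G)/_m M(H)^*$ as the special multifield of a quotient pre-special group, and then read off the ``moreover'' by unwinding the definition of a preordering. First I would note that $M(H)^* = H$ is a multiplicative subset of $M(G)$ containing $1$ (it is a subgroup of $G$), so by Proposition \ref{defn:strangeloc} the quotient $M(G)/_m H$ is a well-defined hyperfield. I would then compute Marshall's equivalence relation: for nonzero $a,b \in G$, $a \sim b$ means $as = bt$ for some $s,t \in H$, and since $G$ has exponent $2$ this is equivalent to $ab \in H$; moreover $x \sim 0$ only for $x = 0$. Hence the underlying set of $M(G)/_m H$ is $(G/H) \cup \{\overline 0\}$, with multiplication $\overline a\,\overline b = \overline{ab}$, negation $-\overline a = \overline{-a}$ and distinguished element $\overline{-1}$ all being exactly the induced quotient-group structure on $G/H$.

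The core step is to identify the hyperaddition and recognize $M(G)/_m H$ as a special multifield. The Marshall sum is
$$\overline a + \overline b = \{\overline c : \exists\, s,t,v \in H \text{ with } cv \in as + bt \text{ in } M(G)\}.$$
I would show this has exactly the shape prescribed by Proposition \ref{sg.to.mf}. If $\overline a = \overline 0$ or $\overline b = \overline 0$ the sum is the remaining singleton. If $\overline a = -\overline b$ (equivalently $a = -bh$ for some $h \in H$), then choosing $s = h$, $t = 1$ gives $as + bt = -b + b = M(G)$, so $\overline a + \overline b$ is all of $M(G)/_m H$; in particular $\overline 1 - \overline 1$ is everything, so the quotient is hyperbolic. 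Otherwise $as + bt = D_G(as, bt)$, and after normalizing the auxiliary parameters via the scaling identity $D_G(\lambda x,\lambda y) = \lambda D_G(x,y)$ the condition $\overline c \in \overline a + \overline b$ becomes precisely $\overline c \in D_{G/H}(\overline a, \overline b)$ for the transported representation $D_{G/H}(\overline a,\overline b) := \{\overline c : \exists\, h_1,h_2 \in H,\ c \in D_G(ah_1, bh_2)\}$. Thus $M(G)/_m H = M(G/H)$, where $D_{G/H}$ inherits the pre-special group axioms from those of $G$; this identification is the isomorphism $G/H \cong M(G)/_m M(H)^*$.

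For the ``moreover'', I would unwind what it means for $M(H) = H \cup \{0\}$ to be a preordering of $M(G)$. Multiplicative closure $M(H)\cdot M(H) \subseteq M(H)$ and containment of squares (the only nonzero square of $M(G)$ is $1$, since $G$ has exponent $2$) are automatic from $H$ being a subgroup. The only real content is additive closure $M(H) + M(H) \subseteq M(H)$: for $h_1, h_2 \in H$ with $h_1 = -h_2$ one has $h_1 + h_2 = M(G)$, which lies in $M(H)$ only degenerately, and this case is exactly excluded by $-1 \notin H$; for $h_1 \neq -h_2$ one needs $D_G(h_1, h_2) \subseteq H$. Using $D_G(h_1,h_2) = h_1\,D_G(1, h_1 h_2)$ (the scaling identity, with $h_1 h_2 \in H$) this single requirement is equivalent to $D_G(1,h) \subseteq H$ for all $h \in H$, which is precisely the condition that $H$ be a saturated subgroup.

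The main obstacle I expect is the identification of the hyperaddition: matching Marshall's three-parameter description (with $s,t,v \in H$) against the two-variable representation $D_{G/H}$ in the quotient, while correctly handling the degenerate cases ($\overline a = -\overline b$ and the occurrences of $\overline 0$) and confirming that the quotient carries a genuine pre-special \emph{multifield} structure rather than merely a group structure (which here comes for free once $M(G)/_m H$ is recognized as $M(G/H)$). Once the scaling identity $D_G(\lambda x,\lambda y) = \lambda D_G(x,y)$ is used to absorb the auxiliary parameters, both this step and the saturation computation reduce to bookkeeping.
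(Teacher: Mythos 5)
Your proposal is correct in its essential content, and it takes a genuinely different route from the paper --- necessarily so, because the paper gives no direct argument for Corollary \ref{res3}: it is presented as one of the corollaries obtained by ``combining'' Theorem \ref{res0}, Corollary \ref{res1} and Theorem \ref{ktmarshall2}, which are K-theoretic isomorphism/surjectivity statements and do not actually contain this structural fact about Marshall quotients. Your proof supplies exactly the verification the paper leaves implicit: you compute Marshall's relation from Proposition \ref{defn:strangeloc} (for nonzero $a,b$, $a\sim b$ iff $ab\in H$, using that $G$ has exponent $2$), identify the underlying set of $M(G)/_m M(H)^*$ with $(G/H)\cup\{\overline 0\}$, and absorb the three auxiliary parameters $s,t,v\in H$ into the representation sets via the SG5 scaling identity $D_G(\lambda x,\lambda y)=\lambda D_G(x,y)$. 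The pivotal observation that $as=-bt$ for some $s,t\in H$ happens exactly when $\overline a=-\overline b$ is what makes the case analysis against Proposition \ref{sg.to.mf} close up, and you have it. Likewise the ``moreover'': testing additive closure of $M(H)$ on pairs $(1,h)$ gives saturation, and conversely $D_G(h_1,h_2)=h_1D_G(1,h_1h_2)\subseteq H$ recovers additive closure, with squares and multiplicative closure automatic. This is precisely what Corollary \ref{res4} needs downstream, so your argument is more informative than the paper's derivation-by-combination.

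One caveat. Your side claim that $D_{G/H}$ ``inherits the pre-special group axioms'' from $G$, so that pre-speciality of the quotient ``comes for free,'' is an overreach and, fortunately, is not needed for the statement. For an arbitrary subgroup $H$, transitivity of the induced binary relation on $G/H$ (axiom SG0 of Definition \ref{defn:sg}) is not automatic: chaining two isometries in $G$ would require passing through two \emph{different} $H$-twisted representatives of the same middle pair, which the pre-SG axioms do not license; equivalently, the Marshall quotient is always a hyperbolic hyperfield of exponent $2$, but condition DM2 need not survive the quotient --- the paper's Example \ref{expresgn} shows that hyperbolic exponent-$2$ hyperfields can indeed fail DM2, so this cannot be had from general principles. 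The displayed isomorphism should therefore be read as an identification of the hyperfield $M(G)/_m M(H)^*$ with the multi-structure built from $(G/H,D_{G/H})$ by the recipe of Proposition \ref{sg.to.mf}, which is exactly what your computation establishes; in the saturated case (the only one used in Corollary \ref{res4}) the quotient is again the multifield of a genuine (special) group. Restrict the ``for free'' remark to that case and the proof is complete.
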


  \begin{cor}\label{res4}
   Let $G$ be a special group and $H$ be a saturated subgroup. Then we have a surjective map
   $$k^{dm}(G)\rightarrow k^{mult}(G/_m H)\cong k^{dm}(G/H).$$
  \end{cor}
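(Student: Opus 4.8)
The plan is to transport the whole statement to the hyperfield side via the functor $M$ and then apply the surjectivity part of Theorem \ref{ktmarshall2}. First I would set $F := M(G)$, the special multifield attached to $G$ (Proposition \ref{sg.to.mf}), and $T := M(H) = H \cup \{0\}$. Recall that $F$ is hyperbolic: since $1 = -(-1)$, the defining rule for the sum in $M(G)$ gives $1 + (-1) = M(G)$. Because $G$ is a group of exponent $2$, every square in $\dot F = G$ equals $1 \in H = M(H)^*$, so $\dot F^2 = \{1\} \subseteq T$ and the hypothesis of Theorem \ref{ktmarshall2} is satisfied automatically. Hence that theorem yields a surjective morphism of (reduced) graded rings
$$k(M(G)) \twoheadrightarrow k(M(G)/_m M(H)^*).$$

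Next I would identify the two ends of this surjection with Dickmann–Miraglia K-theories. On the source, Theorem \ref{km5} gives $k^{dm}(G) \cong k^{mult}(M(G))$, and since $G$ has exponent $2$ the reduced and full $K^{mult}$-theories of $M(G)$ coincide (as observed in the proof of Theorem \ref{km5}), so $k^{mult}(M(G)) = k(M(G))$. On the target, I would use the convention $G/_m H := M(G)/_m M(H)^*$, so that $k^{mult}(G/_m H)$ is literally the codomain $k(M(G)/_m M(H)^*)$ of the surjection.

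It then remains to exhibit the isomorphism $k^{mult}(G/_m H) \cong k^{dm}(G/H)$. Here I would invoke Corollary \ref{res3}: because $H$ is saturated, $M(H)$ is a preordering of $M(G)$ and the displayed isomorphism $G/H \cong M(G)/_m M(H)^*$ holds; read through $M$ this says $M(G/H) \cong M(G)/_m M(H)^*$, where $G/H$ is again a special group precisely because $H$ is saturated (so $k^{dm}(G/H)$ is defined). Applying Theorem \ref{km5} once more to $G/H$ gives $k^{mult}(M(G/H)) \cong k^{dm}(G/H)$, whence $k^{mult}(G/_m H) \cong k^{dm}(G/H)$. Composing these identifications with the surjection of Theorem \ref{ktmarshall2} produces the desired surjective map
$$k^{dm}(G) \cong k(M(G)) \twoheadrightarrow k(M(G)/_m M(H)^*) = k^{mult}(G/_m H) \cong k^{dm}(G/H).$$

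The routine verifications (hyperbolicity of $M(G)$ and $\dot F^2 \subseteq T$) are immediate from exponent $2$, so the only genuinely load-bearing input is the compatibility $M(G)/_m M(H)^* \cong M(G/H)$ of Marshall's quotient with the functor $M$, together with the facts that a saturated $H$ gives a preordering $M(H)$ and a special quotient $G/H$ — all furnished by Corollary \ref{res3}. I therefore expect the main obstacle to be purely bookkeeping: confirming that the three cited isomorphisms can actually be \emph{chained}, i.e.\ that the surjection of Theorem \ref{ktmarshall2} factors through the identifications $k^{dm}(G) \cong k(M(G))$ and $k(M(G)/_m M(H)^*) \cong k^{dm}(G/H)$. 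This follows from the naturality of the functorial constructions underlying Theorems \ref{km5} and \ref{ktmarshall2}, since all the maps involved are induced on generators by the canonical projections and the functor $M$.
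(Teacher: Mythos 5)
Your proposal is correct and follows essentially the same route the paper intends: the paper derives this corollary precisely by combining the surjection of Theorem \ref{ktmarshall2} (applied to $F=M(G)$, $T=M(H)$, where hyperbolicity and $\dot F^2\subseteq T$ hold trivially by exponent $2$) with the identifications $k^{dm}(G)\cong k^{mult}(M(G))$ from Theorem \ref{km5} and $G/H\cong M(G)/_m M(H)^*$ for saturated $H$ from Corollary \ref{res3}. Your reading of the (typo-ridden) hypothesis of Theorem \ref{ktmarshall2} as $\dot F^2\subseteq T$ is the intended one, and your chaining of the isomorphisms matches the paper's implicit argument.
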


 % Adjuncao Igr - Hyperfield hyperbolico (pre-special)
 We close this paper "embedding" the three K-theories considered until now into the framework of inductive graded rings, in such a way that this "embedding" provides a sort of free objects in $\mbox{Igr}$. Lets firstly define functors that formalize the construction of K-theory.

 \begin{defn}\label{level1}
 We define the \textbf{graded subring generated by the level 1} functor
 $$\mathbbm1:\mbox{Igr}\rightarrow\mbox{Igr}$$
 as follow: for an object $R=(R_n,h_n,\ast_{nm})$,
\begin{enumerate}[i -]
 \item $\mathbbm1(R)_0:=R_0\cong\mathbb F_2$,
 \item $\mathbbm1(R)_1:=R_1$,
 \item for $n\ge2$,
 \begin{align*}
  \mathbbm1(R)_n&:=\{x\in R_n:x=\sum\limits^{r}_{j=1}a_{1j}\ast_{11}...\ast_{11}a_{nj}, \\
  &\mbox{with }a_{ij}\in R_1,\,1\le i\le n,\,1\le j\le r\mbox{ for some }r\ge1\}.
 \end{align*}
 Of course, $\mathbbm1(R)$ provides an inclusion $\iota_{\mathbbm1(R)}:\mathbbm1(R)\rightarrow R$ in the obvious way.
\end{enumerate}

On the morphisms, for $f\in\mbox{Igr}(R,S)$, we define $\mathbbm1(f)\in\mbox{Igr}(\mathbbm1(R),\mathbbm1(S))$ by
the restriction $\mathbbm1(f)=f\upharpoonleft_{\mathbbm1(R)}$. In other words, $\mathbbm1(f)$ is the only Igr-morphisms that
makes the following diagram commute:
$$\xymatrix@!=4.5pc{\mathbbm1(R)\ar[r]^{\iota_{\mathbbm1(R)}}\ar[d]_{\mathbbm1(f)} & R\ar[d]^{f} \\
\mathbbm1(S)\ar[r]_{\iota_{\mathbbm1(S)}} & S}$$

We denote $\mbox{Igr}_{\mathbbm1}$ the full subcategory of $\mbox{Igr}$ such that
$$\mbox{Obj}(\mbox{Igr}_{\mathbbm1})=\{R\in\mbox{Igr}:\iota_{\mathbbm1(R)}:\mathbbm1(R)\rightarrow R\mbox{ is an isomorphism}\}.$$
\end{defn}

\begin{defn}\label{quotop}
 We define the \textbf{quotient graded ring functor}
 $$\mathcal Q:\mbox{Igr}\rightarrow\mbox{Igr}$$
 as follow: for
an object $R=(R_n,h_n,\ast_{nm})$, $\mathcal Q(R):=R/Q$, where $Q=(Q_n)_{n\ge0}$ is the ideal generated by
$\{(\top+a)\ast_{11}a\in R_2:a\in R_1\}$. More explicit,
\begin{enumerate}[i -]
 \item $Q_0:=\{0\}\subseteq R_0$,
 \item $Q_1:=\{0\}\subseteq R_1$,
 \item for $n\ge2$, $Q_n\subseteq R_n$ is the pointed $\mathbb F_2$-submodule generated by
 \begin{align*}
  \{x\in R_n:x&=y_l\ast_{l1}(\top+a_1)\ast_{11}a_1\ast_{1r}z_r, \\
  &\mbox{with }a_1\in R_1,\,y_l\in R_l,\,z_r\in R_r,\,1\le r,l\le n-2,\,l+r=n-2\}.
 \end{align*}
 Of course, $\mathcal Q(R)$ provides a projection $\pi_{\mathcal Q(R)}:R\rightarrow\mathcal Q(R)$ in the obvious way.
\end{enumerate}

On the morphisms, for $f\in\mbox{Igr}(R,S)$, we define $\mathcal Q(f)\in\mbox{Igr}(\mathcal Q(R),\mathcal Q(S))$ by
the only Igr-morphisms that makes the following diagram commute:
$$\xymatrix@!=4.5pc{R\ar[r]^{\pi_{\mathcal Q(R)}}\ar[d]_{f} & \mathcal Q(R)\ar[d]^{\mathcal Q(f)} \\
S\ar[r]_{\pi_{\mathcal Q(S)}} & \mathcal Q(S)}$$

We denote $\mbox{Igr}_q$ the full subcategory of $\mbox{Igr}$ such that
$$\mbox{Obj}(\mbox{Igr}_q)=\{R\in\mbox{Igr}:\pi_{\mathcal Q}:R\rightarrow\mathcal Q(R)\mbox{ is an isomorphism}\}.$$
\end{defn}

\begin{defn}
 We denote by $\mbox{Igr}_+$ the full subcategory of $\mbox{Igr}$ such that
 $$\mbox{Obj}(\mbox{Igr}_+)=\mbox{Obj}(\mbox{Igr}_{\mathbbm1})\cap\mbox{Obj}(\mbox{Igr}_q).$$
 We denote by $j_+:\mbox{Igr}_+\rightarrow\mbox{Igr}$ the inclusion functor.
\end{defn}

By the very definition of the K-theory of hyperfields (with the notations in Theorem \ref{3.3ktmultiadap}) we define the following functor.

 \begin{defn}[K-theories Functors]
  With the notations of Theorem \ref{3.3ktmultiadap} we have a functors $k:\mathcal{HMF}\rightarrow\mbox{Igr}_+$, $k:\mathcal{PSMF}\rightarrow\mbox{Igr}_+$ induced by the reduced K-theory for hyperfields.
 \end{defn}

 Now, let $R\in\mbox{Igr}$. We define a hyperfield $(\Gamma(R),+,-.\cdot,0,1)$ by the following: firstly, fix an isomorphism $e_R:(R_1,+_1,0_1,\top_1)\rightarrow (G(R),\cdot,1,-1)$. This isomorphism makes, for example, an element $a\ast_{11}(\top+b)\in R_2$, $a,b\in R_1$ take the form $(e^{-1}_R(x))\ast_{11}(e^{-1}_R((-1)\cdot y))\in R_2$, $x,y\in\Gamma(R)$. By an abuse of notation, we simply write $x\ast_{11}(-y)\in R_2$, $x,y\in\Gamma(R)$. In this sense, an element in $Q_2$ (see Definition \ref{quotop}) has the form $x\ast_{11}(-x)$, $x\in \Gamma(R)$, and we can extend this terminology for all $Q_n$, $n\ge2$.

 Now, let $\Gamma(R):=G(R)\cup\{0\}$ and for $a,b\in\Gamma(R)$ we define
 \begin{align}\label{gammahyper}
    -a&:=(-1)\cdot a,\nonumber\\
    a\cdot0&=0\cdot a:=0,\nonumber\\
    a+0&=0+a=\{a\},\nonumber \\
    a+(-a)&=\Gamma(R),\nonumber \\
    \mbox{for }&a,b\ne0,a\ne-b\mbox{ define }\nonumber\\
    a+b&:=\{c\in\Gamma(R):\mbox{there exist }d\in G(R)\mbox{ such that } \nonumber\\
    &a\cdot b=c\cdot d\mbox{ and }a\ast_{11}b=c\ast_{11}d\in R_2\}.
\end{align}

\begin{prop}\label{prespechf}
With the above rules, $(\Gamma(R),+,-.\cdot,0,1)$ is a pre-special hyperfield.
\end{prop}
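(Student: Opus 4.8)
The plan is to push the whole computation into the degree-two piece $R_2$, exploiting that $R\in\mbox{Igr}_+$. Writing $\phi(x,y):=e_R^{-1}(x)\ast_{11}e_R^{-1}(y)\in R_2$ for nonzero $x,y$ (i.e. $x,y\in G(R)$), this pairing is symmetric and biadditive, since $e_R^{-1}$ is a group isomorphism onto $(R_1,+)$ and $\ast_{11}$ is $\mathbb F_2$-bilinear, and $\phi(x,-1)=e_R^{-1}(x)\ast_{11}\top_1$. The purely multiplicative axioms of Definition \ref{defn:multiring} — commutative monoid, $a\cdot 0=0$, and invertibility of nonzero elements, so that $\Gamma(R)$ is a multifield — together with DM1 ($a^2=1$) and the fact that $-(\,\cdot\,)=(-1)\cdot(\,\cdot\,)$ is an involution, are immediate from $(G(R),\cdot,1,-1)$ being a group of exponent $2$. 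Hyperbolicity (DM0) is literally the clause $a+(-a)=\Gamma(R)$ of \eqref{gammahyper} applied to $a=1$.

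The decisive preliminary is the relation forced by $R\in\mbox{Igr}_q$: the generator $(\top_1+u)\ast_{11}u$ of $Q_2$ vanishes in $R$, so $u\ast_{11}u=u\ast_{11}\top_1$ for every $u\in R_1$, i.e. $\phi(x,x)=\phi(x,-1)$ and $\phi(x,-x)=0$ for all $x$. Substituting the forced partner $d=abc$ into \eqref{gammahyper} and simplifying with this relation, I would reduce the generic addition to the single equation
\[
c\in a+b\ \Longleftrightarrow\ E(a,b;c)=0,\qquad E(a,b;c):=\phi(a,b)+\phi(a,c)+\phi(b,c)+\phi(c,-1),
\]
valid for $a,b\in G(R)$ with $a\neq -b$ (and $0\in a+b$ exactly when $a=-b$). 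From here several axioms become one-line $\mathbb F_2$-identities in $R_2$: commutativity is the symmetry of $E$ in $a,b$; neutrality of $0$ is clear; reversibility (axiom (i) of Definition \ref{defn:multigroupI}) is the identity $E(a,b;c)=E(c,-b;a)$; and distributivity is $E(at,bt;ct)=E(a,b;c)$ for $t\neq 0$, where the extra $t$-terms cancel precisely because $\phi(t,t)=\phi(t,-1)$. I would also record the facts $1\in 1-a$ and $b\in b+e$, both of which reduce to $E(\cdots)=0$ after cancellation. For DM2 the characterization is transparent: for $a\neq 1$, expanding $c\in 1-a$ and using $\phi(c,c)=\phi(c,-1)$ collapses the membership to the single equation $\phi(c,a)=0$; hence $c,c'\in 1-a$ give $\phi(cc',a)=\phi(c,a)+\phi(c',a)=0$, so $cc'\in 1-a$, while the case $a=1$ is absorbed by $1-1=\Gamma(R)$, yielding $(1-a)(1-a)\subseteq 1-a$.

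The main obstacle is associativity of $+$ (axiom (iii) of Definition \ref{defn:multigroupI}), the only axiom not collapsing to a single identity. I would first clear the degenerate configurations: if some argument is $0$ the claim is trivial, and if an inner sum has the shape $x+(-x)$ then it equals all of $\Gamma(R)$ and both triple sums equal $\Gamma(R)$ — for instance, when the chosen $f\in a+b$ equals $-e$, reversibility applied to $-e\in a+b$ gives $a\in(-e)+(-b)$, whence $-a\in b+e$ after scaling by $-1$, so $a+(b+e)\supseteq a+(-a)=\Gamma(R)$. In the generic case one takes $c\in(a+b)+e$ witnessed by $f\in a+b$ and $c\in f+e$; by the characterization this is the pair of linear conditions $\phi(f,-ab)=\phi(a,b)$ and $\phi(f,ce)=\phi(c,-e)$ in $R_2$, and the goal is to produce $w\in b+e$ with $c\in a+w$, i.e. $\phi(w,-be)=\phi(b,e)$ and $\phi(w,ca)=\phi(c,-a)$. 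The crux — the step I expect to be genuinely hard — is exhibiting such a $w$: a closed-form guess built from $a,b,e,c,f$ does \emph{not} satisfy both equations, so one must use the finer structure of objects of $\mbox{Igr}_+$, namely that $R_2$ is generated by the image of $\phi$ and presented by the relations $\phi(x,x)=\phi(x,-1)$, in order to solve the two $R_2$-equations for $e_R^{-1}(w)$, splitting into subcases according to coincidences and oppositions among the intermediate elements. Once associativity is established, $\Gamma(R)$ is a commutative multigroup and a multifield satisfying DM0–DM2, hence a pre-special hyperfield.
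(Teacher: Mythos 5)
Everything you actually carry out is correct, and your $E$-calculus is tighter than the paper's own computations: writing $\hat x=e_R^{-1}(x)\in R_1$, your characterization $c\in a+b\iff E(a,b;c)=0$ (equivalently $(\hat a+\hat c)\ast_{11}(\hat b+\hat c)=0$), the identity $E(c,-b;a)=E(a,b;c)$ — which is the \emph{correct} form of multigroup reversibility; the paper instead states and ``verifies'' $c\in a+b\Rightarrow a\in b-c$, which is not the axiom and is false in $\Gamma(R)$ in general — the cancellation $E(at,bt;ct)=E(a,b;c)$, and the collapse of $c\in 1-a$ to $\phi(c,a)=0$ all check out, using exactly the $\mbox{Igr}_q$ relation $\phi(x,x)=\phi(x,-1)$. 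But your proposal stops at the one step that carries the whole weight of the proposition: associativity. You concede you cannot produce the witness $w\in b+e$ with $c\in a+w$, and you defer to ``the finer structure of objects of $\mbox{Igr}_+$, namely that $R_2$ is generated by the image of $\phi$ and presented by the relations $\phi(x,x)=\phi(x,-1)$''. That is a genuine gap, and the deferral appeals to a property that objects of $\mbox{Igr}_+$ do not have: membership in $\mbox{Igr}_+$ says only that those relations \emph{hold} ($\mbox{Igr}_q$) and that $R_2$ is \emph{spanned} by products of level-one elements ($\mbox{Igr}_{\mathbbm 1}$); it does not say they are the \emph{only} relations, which is what ``presented by'' would require.

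Moreover, the gap cannot be closed from the stated hypotheses, because the generic-case witness can genuinely fail to exist. Let $R$ be the quotient of $\mathbb F_2[e_1,\dots,e_5]$ (graded by degree, $\top_1=e_1$, $h_n$ given by multiplication by $e_1$) by the homogeneous ideal generated by $e_i^2+e_1e_i$ for $2\le i\le 5$ together with the two extra quadrics $e_1e_4+e_2e_4$ and $e_1e_5+e_3e_5+e_4e_5+e_3e_4$; this $R$ lies in $\mbox{Igr}_+$. Take $\hat a=0$, $\hat b=e_2$, $\hat c=e_3$, $\hat x=e_4$, $\hat p=e_5$. The two extra quadrics say exactly that $x\in a+b$ and $p\in x+c$, so $p\in(a+b)+c$; on the other hand, solving $e_5\ast_{11}(e_5+\hat z)=0$ coefficientwise shows that the only $z$ with $p\in a+z$ are $z=-1$ and $z=p$, and neither lies in $b+c$ (the products $(e_1+e_2)(e_1+e_3)$ and $(e_5+e_2)(e_5+e_3)$ contain the monomials $e_1^2$, resp.\ $e_2e_3$, which no element of the degree-two part of the ideal contains). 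Hence $p\notin a+(b+c)$, and associativity fails. So any correct proof must use more than the $\mbox{Igr}_+$ axioms — e.g.\ that $R_2$ carries no quadratic relations beyond the forced ones, as happens for $R=k_*(F)$. Your diagnosis that no closed-form witness works is exactly right and in fact exposes the weakness of the paper's own argument: the paper takes the closed-form witness $z=xca$ and justifies it by the manipulation $x\ast_{11}c=[x\ast_{11}c\ast_{11}a]\ast_{11}a=(xca)\ast_{11}a$, which equates elements of different degrees and is not valid in a graded ring; in your calculus that witness satisfies the two required equations iff $E(x,c;a)=0$, i.e.\ iff $a\in x+c$, which is not given (and fails in the example above). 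So you located the real difficulty, but the proposal does not overcome it, and as stated it cannot be overcome.
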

\begin{proof}
The proof is similar to Theorem \ref{hfproduct}: we will verify the conditions of Definition \ref{defn:multiring}. Note that by the definition of multivalued sum once we proof that $\Gamma(R)$ is an hyperfield, it will be hyperbolic.

\begin{enumerate}[i -]
 \item In order to prove that $(\Gamma(R),+,-.\cdot,0,1)$ is a multigroup we follow the steps below.
 \begin{enumerate}[a -]
   \item Commutativity and $(a\in b+0)\Leftrightarrow(a=b)$ are direct consequence of the definition of multivaluated sum and the fact that $a\ast_{11}b=b\ast_{11}a$.

  \item We will prove that if $c\in a+b$, then $a\in b-c$ and $b\in -c+a$.

  If $a=0$ (or $b=0$) or $a=-b$, then $c\in a+b$ means $c=a$ or $c\in a-a$. In both cases we get $a\in b-c$ and $b\in -c+a$.

  Now suppose $a,b\ne0$ with $a\ne-b$. Let $c\in a+b$. Then $a\cdot b=c\cdot d$ and $a\ast_{11}b=c\ast_{11}d\in R_2$ for some $d\in G(R)$. Then $b\cdot(-c)=a\cdot(-d)$ and
  \begin{align*}
      a\ast_{11}b=c\ast_{11}d&\Rightarrow
      a\ast_{11}b\ast_{11}[(-b)\ast_{11}(-d)]=c\ast_{11}d\ast_{11}[(-b)\ast_{11}(-d)] \\
      &\Rightarrow a\ast_{11}(-d)=b\ast_{11}(-c),
  \end{align*}
  proving that $a\in b-c$. Similarly we prove that $b\in -c+a$.

  \item Now we prove the associativity, that is,
  $$(a+b)+c=a+(b+c).$$
  In fact (see the remarks after Lemma 2.4 of \cite{ribeiro2016functorial}), it is enough to show
  $$(a+b)+c\subseteq a+(b+c).$$

  If $0\in\{a,b,c\}$ we are done. Now let $a,b,c\ne0$ and
  $x\in(a+b)+c$. If $b=-c$ we have
  $$a+(b+c)=a+\Gamma(R)=\Gamma(R)\supseteq(a+b)+c.$$
  If $-c\in a+b$, then $-a\in b+c$ and we have
  $$(a+b)+c=\Gamma(R)=a+(b+c).$$

  Now suppose $a,b,c\ne0$, $c\ne-b$, $-c\notin a+b$. Let
  $p\in (a+b)+c$. Then $p\in x+c$ for some $x\in a+b$, and there exists $q,y\in G(R)$ such that
  \begin{align*}
      p\cdot q=x\cdot c&\mbox{ and }p\ast_{11}q=x\ast_{11}c \\
      x\cdot y=a\cdot b&\mbox{ and }x\ast_{11}y=a\ast_{11}b.
  \end{align*}
  Now let $z,w\in G(R)$ defined by $z=xca$, $w=abx$. We have $p\cdot q=x\cdot c=x\cdot c\cdot(a^2)=a\cdot z$ and $z\cdot w=(xca)\cdot(abx)=b\cdot c$. Moreover
  \begin{align*}
      p\ast_{11}q&=x\ast_{11}c=x\ast_{11}c\ast_{11}(a^2)=x\ast_{11}c\ast_{11}a\ast_{11}a \\
      &=[x\ast_{11}c\ast_{11}a]\ast_{11}a=(xca)\ast_{11}a=z\ast_{11}a
  \end{align*}
  and
  \begin{align*}
      z\ast_{11}w&=(xca)\ast_{11}(abx)=(cxa)\ast_{11}(axb)=
      c\ast_{11}(xa)\ast_{11}(ax)\ast_{11}b=c\ast_{11}b.
  \end{align*}
  Then $p\in a+z$ with $z\in c+b=b+c$, and hence $p\in a+(b+c)$.
 \end{enumerate}

   \item Since $(G(R),\cdot,1)$ is an abelian group, we conclude that $(\Gamma(R),\cdot,1)$ is a commutative monoid. Beyond this, every nonzero element $a\in\Gamma(R)$ is such that $a^2=1$.

  \item $a\cdot0=0$ for all $a\in\Gamma(R)$ is direct from definition.

  \item For the distributive property, let $a,b,d\in\Gamma(R)$ and consider $x\in d(a+b)$. We need to prove that
  \begin{align*}
      \tag{*}x\in d\cdot a+d\cdot b.
  \end{align*}
  It is the case if $0\in\{a,b,d\}$ or if $b=-a$.

  Now suppose $a,b,d\ne0$ with $b\ne-a$. Then there exist $y\in G(R)$ such that $x=dy$ and $y\in a+b$. Moreover, there exist some $z\in G(R)$ such that $y\cdot z=a\cdot b$ and $y\ast_{11}z=a\ast_{11}b$. Therefore $(dy)\cdot(dz)=(da)\cdot(db)$ and $(dy)\ast_{11}(dz)=(da)\ast_{11}(db)$, and $x=dy\in d\cdot a+d\cdot b$.
\end{enumerate}
Then $(\Gamma(R),+,-,\cdot,0,1)$ is a hyperbolic hyperfield.

Finally, let $a\in\Gamma(R)$ and $x,y\in1-a$. If $a=0$ or $a=1$ then we automatically have $x\cdot y\in 1-a$, so let $a\ne0$ and $a\ne1$. Then $x,y\in G(R)$ and there exist $p,q\in\Gamma(R)$ such that
  \begin{align*}
      x\cdot p=1\cdot a&\mbox{ and }x\ast_{11}p=1\ast_{11}a \\
      y\cdot q=1\cdot a&\mbox{ and }y\ast_{11}q=1\ast_{11}a.
  \end{align*}
  Then $(xy)\cdot(pqa)=1\cdot a$ and
  \begin{align*}
      (xy)\ast_{11}(pqa)&=x\ast_{11}y\ast_{11}p\ast_{11}q\ast_{11}a
      =[x\ast_{11}p]\ast_{11}a\ast_{11}[y\ast_{11}q] \\
      &=[1\ast_{11}a]\ast_{11}a\ast_{11}[1\ast_{11}a]
      =1\ast_{11}a,
  \end{align*}
  then $xy\in1-a$, proving that $(1-a)(1-a)\subseteq(1-a)$.
\end{proof}

 \begin{defn}
  With the notations of Proposition \ref{prespechf} we have a functor $\Gamma:\mbox{Igr}_+\rightarrow\mbox{PSMF}$ defined by the following rules: for $R\in\mbox{Igr}_+$, $\Gamma(R)$ is the special hyperfield obtained in Proposition \ref{prespechf} and for $f\in\mbox{Igr}_+(R,S)$, $\Gamma(f):\Gamma(R)\rightarrow\Gamma(S)$ is the unique morphism such that the following diagram commute
  $$\xymatrix@!=4.5pc{R\ar[d]_{f_1}\ar[r]^{e_R} & \Gamma(R)\ar@{.>}[d]^{\Gamma(f)} \\ S\ar[r]_{e_S} & \Gamma(S)}$$
  In other words, for $x\in R$ we have
  $$\Gamma(f)(x)=(e_S\circ f_1\circ e^{-1}_R)(x)=e_S(f_1(e_R^{-1}(x))).$$
 \end{defn}

 \begin{teo}
 The functor $k:\mathcal{PSMF}\rightarrow\mbox{Igr}_+$ is the left adjoint of $\Gamma:\mbox{Igr}_+\rightarrow\mathcal{PSMF}$. The unity of the adjoint is the natural transformation $\phi:1_{\mathcal{PSMF}}\rightarrow\Gamma\circ k$ defined for $F\in\mathcal{PSMF}$ by $\phi_F=e_{k(F)}\circ\rho_F$.
 \end{teo}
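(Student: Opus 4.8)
The plan is to establish the adjunction through its unit $\phi$ together with the associated universal property: for every $R\in\mbox{Igr}_+$ and every $g\in\mathcal{PSMF}(F,\Gamma(R))$, I will produce a \emph{unique} $\tilde g\in\mbox{Igr}_+(k(F),R)$ satisfying $\Gamma(\tilde g)\circ\phi_F=g$. This characterizes $k$ as left adjoint to $\Gamma$ with unit $\phi$, so three things must be checked: (1) each $\phi_F$ is a genuine $\mathcal{PSMF}$-morphism; (2) the universal property; (3) naturality of $\phi$.

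For (1), the only non-formal point is that $\phi_F$ respects the multivalued sum. Unwinding the addition rules \eqref{gammahyper} of $\Gamma(k(F))$, the requirement ``$\phi_F(c)\in\phi_F(a)+\phi_F(b)$'' for $a,b,c\neq0$ and $a\neq-b$ reduces, upon taking the witness $d=\phi_F(abc)$, to the single $K$-theoretic identity
$$\rho(a)\rho(b)=\rho(c)\rho(abc)\quad\mbox{in }k_2(F).$$
I would derive this by scaling the relation $c\in a+b$ by $c$ and using $c^2=1$ to obtain $1\in ca+cb$, hence $ca\in1-cb$; the defining relation of $k_2(F)$ then gives $\rho(ca)\rho(cb)=0$, and expanding with $\rho(ca)=\rho(c)+\rho(a)$, $\rho(cb)=\rho(c)+\rho(b)$, together with exponent $2$, yields the identity. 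The degenerate cases ($a=0$ and $a=-b$) are immediate, the latter using $\top_1=\rho(-1)$ and $e_{k(F)}(\top_1)=-1$ to see $\phi_F(-b)=-\phi_F(b)$.

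For (2), I define $\tilde g$ on level $1$ by $\tilde g_1:=e_R^{-1}\circ g|_{\dot F}\circ\rho_F^{-1}\colon k_1(F)\to R_1$ and extend on generators by $\tilde g_n(\rho(a_1)\cdots\rho(a_n)):=\tilde g_1(\rho(a_1))\ast_{11}\cdots\ast_{11}\tilde g_1(\rho(a_n))$; since $k(F)\in\mbox{Igr}_{\mathbbm1}$ these products generate, so $\tilde g$ is forced to be multiplicative. The crux --- and the step I expect to be the main obstacle --- is \emph{well-definedness}: I must show that every generator $\rho(a)\rho(b)$ of the $K$-theory ideal (with $b\in1-a$, $a\neq0,1$) is sent to $0$ in $R_2$. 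Applying $g$ gives $g(b)\in1-g(a)$ in $\Gamma(R)$; translating this membership through \eqref{gammahyper} and $e_R$ produces, with $u=\tilde g_1(\rho(a))$, $v=\tilde g_1(\rho(b))$, the identity $0=v\ast_{11}(\top_1+u+v)$ in $R_2$ (the left side vanishing because $e_R^{-1}(1)=0_1$ and $\ast_{11}$ is bilinear). Combining this with the relation $(\top_1+v)\ast_{11}v=0$, which holds precisely because $R\in\mbox{Igr}_q$, collapses the expression to $u\ast_{11}v=0$, as required. Once well-definedness is secured, preservation of $\top$ follows from $g(-1)=-1$ and $\tilde g_1(\top_1)=\top_1$, commutation with the $h_n$ follows from Definition \ref{igr1}(iv), and $\Gamma(\tilde g)\circ\phi_F=g$ is a direct computation from the formula for $\Gamma$ on morphisms. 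Uniqueness is automatic: any Igr-morphism out of $k(F)\in\mbox{Igr}_{\mathbbm1}$ is determined by its level-$1$ component, which the equation $\Gamma(\tilde g)\circ\phi_F=g$ pins down.

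Finally, for (3), naturality of $\phi$ is a routine diagram chase: for $h\in\mathcal{PSMF}(F,F')$ the two composites $\Gamma(k(h))\circ\phi_F$ and $\phi_{F'}\circ h$ agree on $\dot F$ because $\rho$ and the isomorphisms $e$ are natural and $k(h)$ is by construction induced by the level-$1$ map; agreement on $\dot F$ suffices since both targets are generated in level $1$. Assembling (1)--(3) gives the claimed adjunction $k\dashv\Gamma$ with unit $\phi_F=e_{k(F)}\circ\rho_F$.
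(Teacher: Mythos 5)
Your proposal is correct and takes essentially the same route as the paper: the adjunction is established through the universal property of the unit $\phi_F=e_{k(F)}\circ\rho_F$, with the transpose built level-wise from $e_R^{-1}\circ g\circ\rho_F^{-1}$, extended multiplicatively to generators, well-definedness on $k_n(F)$ secured by killing the K-theory ideal via the $\mbox{Igr}_q$ relation $(\top_1+a)\ast_{11}a=0$, and uniqueness following from level-$1$ generation ($\mbox{Igr}_{\mathbbm1}$). In fact your write-up is slightly more complete than the paper's, which asserts rather than verifies that $\phi_F$ is a $\mathcal{PSMF}$-morphism and leaves the naturality of $\phi$ implicit.
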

 \begin{proof}
 We show that for all $f\in\mathcal{PSMF}(F,\Gamma(R))$ there is an unique $f^\sharp:\mbox{Igr}_+(k(F),R)$ such that $\Gamma(f^\sharp)\circ\phi_F=f$. Note that $\phi_F=e_{k(F)}\circ\rho_F$ is an isomorphism (because $e_{k(F)}$ and $\rho_F$ are isomorphisms).

 Let $f^\sharp_0:1_{\mathbb F_2}:\mathbb F_2\rightarrow\mathbb F_2$ and
 $f^\sharp_1:=e_R^{-1}\circ f\circ(\phi_F)^{-1}\circ e_{k(F)}:k_1(F)\rightarrow R_1$. For $n\ge2$, define $h_n:\prod^n_{i=1}k_1(F)\rightarrow R_n$ by the rule
 $$h_n(\rho(a_1),...,\rho(a_n)):=e_R^{-1}(f(a_1))\ast...\ast e_R^{-1}(f(a_n)).$$
 We have that $h_n$ is multilinear and by the Universal Property of tensor products we have an induced morphism $\bigotimes^n_{i=1}k_n(F)\rightarrow R_n$ defined on the generators by
 $$h_n(\rho(a_1)\otimes...\otimes\rho(a_n)):=e_R^{-1}(f(a_1))\ast...\ast e_R^{-1}(f(a_n).$$
 Now let $\eta\in Q_n(F)$. Suppose without loss of generalities that $\eta=\rho(a_1)\otimes...\otimes\rho(a_n)$ with $a_1\in1-a_2$. Then $f(a_1)\in1-f(a_2)$ which imply $e_R^{-1}(f(a_1))\in1-e_R^{-1}(f(a_2))$. Since $R_n\in\mbox{Igr}_+$,
 \begin{align*}
     h_n(\eta):=h_n(\rho(a_1)\otimes...\otimes\rho(a_n))
     =e_R^{-1}(f(a_1))\ast...\ast e_R^{-1}(f(a_n))=0\in R_n.
 \end{align*}
 Then $h_n$ factors through $Q_n$, and we have an induced morphism $\overline h_n:k_n(F)\rightarrow R_n$. We set $f^{\sharp}_n:=\overline h_n$. In other words, $f^\sharp_n$ is defined on the generators by
 $$f^\sharp_n(\rho(a_1)...\rho(a_n)):=e_R^{-1}(f(a_1))\ast...\ast e_R^{-1}(f(a_n).$$
 Finally, we have
 \begin{align*}
   \Gamma(f^\sharp)\circ\phi_F
   &=[e_{R}\circ(f^\sharp_1)\circ e^{-1}_{k(F)}]\circ[e_{k(F)}\circ\rho_F]
   =e_{R}\circ(f^\sharp_1)\circ\rho_F \\
   &=e_{R}\circ[e_R^{-1}\circ f\circ(\phi_F)^{-1}\circ e_{k(F)}]\circ\rho_F \\
   &=f\circ(\phi_F)^{-1}\circ[e_{k(F)}\circ\rho_F] \\
   &=f\circ(\phi_F)^{-1}\circ\phi_F=f.
 \end{align*}

 For the uniqueness, let $u,v\in\mbox{Igr}_+(k(F),R)$ such that $\Gamma(u)\circ\phi_F=\Gamma(v)\circ\phi_F$. Since $\phi_F$ is an isomorphism we have $u_1=v_1$ and since $k(F)\in\mbox{Igr}_+$ we have $u=v$.
 \end{proof}

\section{Final Remarks and future works}

In \cite{roberto2021quadratic} we emphasize that DM-multirings and DP-multirings provide a new way to look at the abstract theories of quadratic forms (for example, for special groups we obtained an easy way to describe the axiom SG6 in the theory of special groups).

In \cite{ribeiro2021vonNeumannHull} is constructed a von Neumann hull functor from multiring category and that, when restricted to semi-real rings, it commutes with real semigroup functor. This allows us to obtain some quadratic forms properties of a semi-real ring by looking to its von Neumann regular hull. In fact, the Witt ring of a real semigroups is canonically isomorphic to the Witt ring of its Von Neumann regular hull and this connection allows the full description of the graded Witt ring of a real semigroup.

With these two observations in mind, it is natural consider possible expansions of ``Milnor's triangle'' of graded rings -- where K-theory is interpolating graded Witt ring and graded Cohomology ring -- to the multiring setting. In more details, below are some further roads to follow:

\begin{enumerate}
\item Extension of the K-theory framework to more general multirings (for example, to VN-multirings) with quadratic flavour.

\item Compare graded K-theory with graded Witt ring for VN-real semigroups as in the field case (Milnor \cite{milnor1970algebraick}) and special groups (Dickmann-Miraglia \cite{dickmann2000special}).

\item The definition and analysis of the  structure of Witt ring (and graded Witt ring) of  more general quadratic structures (not only obtained from special groups and real semigroups): this subject have already  appeared in section 4, in connection with \cite{worytkiewiczwitt2020witt}.

\item Still in the hyperfield case, investigate the extension of the concept of Galois group to hyperfields, comparing the Galois cohomology ring and analyse the existence of some canonical arrow from K-theory to this cohomology ring, in an attempt to recover the Milnor's Conjecture available in the classic algebraic quadratic forms context (\cite{roberto2022galoisSG}, \cite{roberto2022graded}).
\end{enumerate}

\section*{Acknowledgement}
We want to thank the anonymous referee for her/his careful reading and valuable suggestions.

\bibliographystyle{amsplain}
\bibliography{one_for_all}

\providecommand{\bysame}{\leavevmode\hbox to3em{\hrulefill}\thinspace}
\providecommand{\MR}{\relax\ifhmode\unskip\space\fi MR }
% \MRhref is called by the amsart/book/proc definition of \MR.
\providecommand{\MRhref}[2]{%
  \href{http://www.ams.org/mathscinet-getitem?mr=#1}{#2}
}
\providecommand{\href}[2]{#2}
\begin{thebibliography}{10}

\bibitem{ameri2019superring}
Reza Ameri, Mansour Eyvazi, and Sarka Hoskova-Mayerova, \emph{Superring of
  polynomials over a hyperring}, Mathematics \textbf{7} (2019), no.~10, 902.

\bibitem{roberto2022ACmultifields2}
Kaique~Matias de~Andrade~Roberto, Hugo~Rafael de~Oliveira~Ribeiro, and
  Hugo~Luiz Mariano, \emph{On algebraic extensions and algebraic closures of
  superfields}, Preliminary version in https://arxiv.org/pdf/2208.08537.
  Submitted (2022).

\bibitem{roberto2021quadratic}
\bysame, \emph{Quadratic structures associated to (multi) rings}, Categories
  and General Algebraic Structures \textbf{16} (2022), no.~1, 105--141.

\bibitem{roberto2021ktheory}
Kaique~Matias de~Andrade~Roberto and Hugo~Luiz Mariano, \emph{K-theories and
  free inductive graded rings in abstract quadratic forms theories}, Categories
  and General Algebraic Structures \textbf{17} (2022), no.~1, 1--46.

\bibitem{roberto2022galoisSG}
\bysame, \emph{{G}alois groups of pre {s}pecial {h}yperfields, {I}}, in
  preparation (2023).

\bibitem{roberto2022graded}
\bysame, \emph{{I}nductive {G}raded {R}ings associated to {Q}uadratic
  {M}ultirings}, submitted (2023).

\bibitem{ribeiro2016functorial}
Hugo~Rafael de~Oliveira~Ribeiro, Kaique~Matias de~Andrade~Roberto, and
  Hugo~Luiz Mariano, \emph{Functorial relationship between multirings and the
  various abstract theories of quadratic forms}, S{\~a}o Paulo Journal of
  Mathematical Sciences \textbf{16} (2022), 5--42.

\bibitem{ribeiro2021vonNeumannHull}
Hugo~Rafael de~Oliveira~Ribeiro and Hugo~Luiz Mariano, \emph{{v}on {N}eumann
  regular hyperrings and applications to real reduced multirings}, arXiv
  preprint arXiv:2101.06527. Submitted (2022).

\bibitem{dickmann1998quadratic}
Maximo Dickmann and Francisco Miraglia, \emph{On quadratic forms whose total
  signature is zero mod $2^n$: Solution to a problem of {M}. {M}arshall},
  Inventiones mathematicae \textbf{133} (1998), no.~2, 243--278.

\bibitem{dickmann2000special}
\bysame, \emph{Special groups: Boolean-theoretic methods in the theory of
  quadratic forms}, Memoirs AMS, no. 689, American Mathematical Society, 2000.

\bibitem{dickmann2003lam}
\bysame, \emph{Lam's conjecture}, Algebra Colloquium, Springer-Verlag, 2003,
  pp.~149--176.

\bibitem{dickmann2006algebraic}
\bysame, \emph{Algebraic k-theory of special groups}, Journal of Pure and
  Applied Algebra \textbf{204} (2006), no.~1, 195--234.

\bibitem{dickmann2015faithfully}
\bysame, \emph{Faithfully quadratic rings}, Memoirs AMS, no. 1128, American
  Mathematical Society, 2015.

\bibitem{dickmann2004real}
Maximo Dickmann and Alejandro Petrovich, \emph{Real semigroups and abstract
  real spectra. i}, Contemporary Mathematics \textbf{344} (2004), 99--120.

\bibitem{worytkiewiczwitt2020witt}
Pawel Gladki and Krzysztof Worytkiewicz, \emph{Witt rings of quadratically
  presentable fields}, Categories and General Algebraic Structures \textbf{12}
  (2020), no.~1, 1--23.

\bibitem{jun2015algebraic}
Jaiung Jun, \emph{Algebraic geometry over hyperrings}, Advances in Mathematics
  \textbf{323} (2018), 142--192.

\bibitem{marshall2006real}
Murray Marshall, \emph{Real reduced multirings and multifields}, Journal of
  Pure and Applied Algebra \textbf{205} (2006), no.~2, 452--468.

\bibitem{milnor1970algebraick}
John Milnor, \emph{Algebraic k-theory and quadratic forms}, Inventiones
  mathematicae \textbf{9} (1970), no.~4, 318--344.

\bibitem{pelea2006multialgebras}
Cosmin Pelea and Ioan Purdea, \emph{Multialgebras, universal algebras and
  identities}, Journal of the Australian Mathematical Society \textbf{81}
  (2006), no.~1, 121--140.

\bibitem{viro2010hyperfields}
Oleg Viro, \emph{Hyperfields for {T}ropical {G}eometry {I}. {H}yperfields and
  dequantization}, arXiv preprint arXiv:1006.3034 (2010).

\bibitem{wadsworth55merkurjev}
A~Wadsworth, \emph{Merkurjev’s elementary proof of {M}erkurjev’s theorem},
  Applications of Algebraic K-theory to Algebraic Geometry and Number Theory,
  Parts I, II (Boulder, Colorado), vol.~55, American Mathematical Society,
  Contemporary Mathematics, 1983, pp.~741--776.

\end{thebibliography}
\end{document}